\numberwithin{equation}{section}
\newtheorem{theorem}{Theorem}[section]
\newtheorem{proposition}[theorem]{Proposition}
\newtheorem{lemma}[theorem]{Lemma}
\newtheorem{follow}[theorem]{Corollary}
\newtheorem{pr}[theorem]{Proposition}
\theoremstyle{definition}
\newtheorem{remark}[theorem]{Remark}
\newtheorem*{remark*}{Remark}
\def\beq{\begin{equation}}
\def\eeq{\end{equation}}
\newcommand{\bea}{\begin{eqnarray}}
\newcommand{\eea}{\end{eqnarray}}
\newcommand{\beas}{\begin{eqnarray*}}
\newcommand{\eeas}{\end{eqnarray*}}
\DeclareMathOperator{\Tr}{Tr}
\DeclareMathOperator{\Ker}{Ker}
\DeclareMathOperator{\sign}{sign}
\DeclareMathOperator{\Op}{{Op}^{\it w}}
\DeclareMathOperator{\Lag}{L}
\DeclareMathOperator*{\wlim}{w-lim}
\newcommand{\one}{\mathds{1}}
\newcommand{\bel}{\begin{equation} \label}
\newcommand{\ee}{\end{equation}}
\newcommand{\bx}{{\bf x}}
\newcommand{\balpha}{\boldsymbol{\alpha}}
\newcommand{\bbeta}{\boldsymbol{\beta}}
\newcommand{\btheta}{\boldsymbol{\theta}}
\newcommand{\bzeta}{\boldsymbol{\zeta}}
\newcommand{\bxi}{\boldsymbol{\xi}}
\newcommand{\bomega}{\boldsymbol{\omega}}
\newcommand{\supp}{{\text{supp}}}
\newcommand{\rd}{{\mathbb R}^{2}}
\newcommand{\re}{{\mathbb R}}
\newcommand{\ze}{{\mathbb Z}}
\newcommand{\R}{{\mathbb R}}
\newcommand{\N}{{\mathbb N}}
\newcommand{\pdo}{$\Psi$DO }
\newcommand{\pdos}{$\Psi$DOs }
\newcommand{\abs}[1]{\lvert#1\rvert}
\newcommand{\Abs}[1]{\left\lvert#1\right\rvert}
\newcommand{\norm}[1]{\lVert#1\rVert}
\newcommand{\jap}[1]{\langle#1\rangle}
\newcommand{\mathcalU}{{\mathcal U}}
\newcommand{\bz}{\mathbf{z}}
\begin{document}
\title[Eigenvalue clusters]{Asymptotic Density of Eigenvalue Clusters for the Perturbed Landau
Hamiltonian}

\author[A.~Pushnitski]{Alexander Pushnitski}
\author[G.~Raikov]{Georgi Raikov}
\author[C.~Villegas-Blas]{Carlos Villegas-Blas}

\begin{abstract}
We consider the Landau Hamiltonian (i.e. the 2D Schr\"odinger
operator with  constant  magnetic field) perturbed by an electric
potential $V$ which decays sufficiently fast at infinity. The
spectrum of the perturbed Hamiltonian consists of clusters of
eigenvalues which accumulate to the Landau levels. Applying a
suitable version of the anti-Wick quantization, we investigate the
asymptotic distribution of the eigenvalues within a given cluster
as the number of the cluster tends to infinity. We obtain an
explicit description of the asymptotic density of the eigenvalues
in terms of the Radon transform of the perturbation potential $V$.
\end{abstract}

\maketitle

{\bf Keywords}: perturbed Landau Hamiltonian, asymptotic density for eigenvalue clusters,
anti-Wick quantization, Radon transform \\

{\bf  2010 AMS Mathematics Subject Classification}:  35P20, 35J10,
 47G30, 81Q10\\

\section{Introduction and main results}\label{intro}

\subsection{Introduction}

Let
    $$
    H_0 : = \left(-i\frac{\partial}{\partial x}+\frac{B}{2}y\right)^2 +
\left(-i\frac{\partial}{\partial y}-\frac{B}{2}x\right)^2,
    $$
    be the self-adjoint operator defined initially on
    $C_0^{\infty}(\rd)$, and then closed in $L^2(\rd)$. The
    operator
 $H_0$ is the Hamiltonian of a non-relativistic spinless 2D quantum
 particle subject to a constant magnetic field of strength $B > 0$.
It is often called the Landau Hamiltonian in honor of the author
of the pioneering paper \cite{lan}.
    The spectrum of $H_0$ consists of the eigenvalues (called Landau levels)
    $\lambda_q =B (2q+1)$, $q \in {\mathbb Z}_+ = 0,1,2,\ldots$. The multiplicity of each of these
    eigenvalues is infinite, and so
$$
\sigma(H_0)=\sigma_\text{ess}(H_0)= \bigcup_{q=0}^{\infty}\{\lambda_q\},
 \quad
 \lambda_q=B(2q+1).
 $$

Next, let $V \in C(\rd; \re)$ satisfy the estimate
\begin{equation}
\abs{V({\bf x})}\leq C\jap{\mathbf x}^{-\rho}, \quad {\mathbf x} \in \rd,
\quad \rho>1,
\label{rho}
\end{equation}
 where  $\jap{\mathbf x} : =(1+\abs{\mathbf x}^2)^{1/2}$.
We also denote by $V$ the operator of multiplication by $V$
    in $L^2(\rd)$. Consider the perturbed Landau Hamiltonian $H =
    H_0 + V$. The spectrum of $H$ consists of eigenvalue clusters
    around the Landau levels. More precisely,
    we have $\sigma_\text{ess}(H)=\sigma_\text{ess}(H_0)$ and so
all eigenvalues of $H$ in $\re \setminus \sigma_{\rm
    ess}(H)$ have finite multiplicities and can accumulate only to the
    Landau levels $\lambda_q$.
   Our first preliminary result says that the eigenvalue
    clusters shrink towards the Landau
    levels as $O(q^{-1/2})$ for $q\to\infty$:
\begin{proposition}\label{p11}
Assume \eqref{rho};
then there exists $C_1>0$  such that for all
$q \in {\mathbb Z}_+$ one has
\begin{equation}
\sigma(H) \cap [\lambda_q -B, \lambda_q + B]\subset
(\lambda_q - {C_1}{\lambda}_q^{-1/2},
\lambda_q + {C_1}{\lambda}_q^{-1/2}).
\label{a7}
\end{equation}
\end{proposition}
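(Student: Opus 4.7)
I combine a Feshbach-type decomposition around the $q$-th Landau subspace with a quantitative norm estimate for $P_qV$, where $P_q$ is the $H_0$-spectral projection onto $\lambda_q$ and $Q:=I-P_q$.

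\textbf{Step 1 (Feshbach identities and the factor $\|P_q\phi\|$).} Let $\phi$ be a normalised eigenvector of $H$ with eigenvalue $\mu\in[\lambda_q-B,\lambda_q+B]$. Applying $P_q$ and $Q$ separately to $(H_0-\mu)\phi=-V\phi$ gives
\[
(\lambda_q-\mu)P_q\phi=-P_qV\phi,\qquad (H_0-\mu)Q\phi=-QV\phi .
\]
The first identity at once yields $|\mu-\lambda_q|\,\|P_q\phi\|\le\|P_qV\|_{\mathrm{op}}$. Since every Landau level other than $\lambda_q$ lies at distance $\ge B$ from $\mu$, the second gives $\|Q\phi\|\le\|V\|_\infty/B$. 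When $\|V\|_\infty$ is small enough this already forces $\|P_q\phi\|\ge1/\sqrt{2}$. In the general case I split $V=V_{\mathrm{near}}+V_{\mathrm{far}}$, with $V_{\mathrm{near}}$ compactly supported in $\{\abs{x}\le R\}$ and $\|V_{\mathrm{far}}\|_\infty$ as small as desired by \eqref{rho}; using that $\|P_qV_{\mathrm{near}}\|_{\mathrm{op}}\to 0$ as $q\to\infty$ (the Landau eigenfunctions concentrate on circles of radius $\sim\sqrt{q/B}$, eventually disjoint from the support of $V_{\mathrm{near}}$), I get $\|P_q\phi\|\ge 1/2$ for all $q$ large enough. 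The finitely many remaining small values of $q$ are handled trivially by enlarging $C_1$ so that $C_1\lambda_q^{-1/2}\ge B$.

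\textbf{Step 2 (Key operator estimate).} The central input is
\[
\|P_qV\|_{\mathrm{op}}^{\,2}\;=\;\|P_qV^{2}P_q\|_{\mathrm{op}}\;\le\;C\,\lambda_q^{-1}
\]
under \eqref{rho} with $\rho>1$. I would prove this via the anti-Wick (Berezin--Toeplitz) representation of $P_qV^2P_q$, which the paper develops in its later sections: the corresponding covariant symbol is an averaging of $V^2$ against the $q$-th coherent state, concentrated on the cyclotron circle $\abs{x}=\sqrt{(2q+1)/B}$, where $V^2(x)\le C\jap{x}^{-2\rho}\le C'\lambda_q^{-\rho}\le C'\lambda_q^{-1}$ since $\rho>1$. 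Combining Steps 1 and 2 gives $|\mu-\lambda_q|\le 2\|P_qV\|_{\mathrm{op}}\le C_1\lambda_q^{-1/2}$, which is \eqref{a7}.

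\textbf{Main obstacle.} Step 2 is the crux. The Hilbert--Schmidt norm satisfies $\|P_qV\|_{\mathrm{HS}}^{\,2}=(B/2\pi)\|V\|_{L^2}^{\,2}$, independent of $q$, so the required decay cannot come from a crude $L^2$-Cauchy--Schwarz argument: one must exploit the oscillatory/localised structure of the Landau eigenfunctions. This is precisely what the anti-Wick machinery developed for the paper's main theorem provides, and Proposition \ref{p11} should be read off as a quantitative by-product of that analysis.
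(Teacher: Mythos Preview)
Your Step~2 is where the argument breaks. The claimed estimate
\[
\|P_qV\|^{2}=\|P_qV^{2}P_q\|\le C\lambda_q^{-1}
\]
is false: the correct bound is only $\|P_qV^{2}P_q\|\le C\lambda_q^{-1/2}$, hence $\|P_qV\|\le C\lambda_q^{-1/4}$, and your conclusion degrades to $|\mu-\lambda_q|=O(\lambda_q^{-1/4})$. The heuristic you give is based on a misconception: the $q$-th Landau eigenspace is infinite-dimensional, and its coherent states are concentrated on cyclotron circles of radius $\sqrt{(2q+1)/B}$ \emph{but with arbitrary centre}~$\mathbf{c}$, not on the single circle $\{|\mathbf{x}|=\sqrt{(2q+1)/B}\}$. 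Choosing $|\mathbf{c}|=\sqrt{(2q+1)/B}$ makes the circle pass through the origin, where $V^{2}$ is of order~$1$; the fraction of the coherent-state mass sitting in a fixed disc is then $\sim(\text{arc length})/(\text{circumference})\sim\lambda_q^{-1/2}$, so $\langle V^{2}\psi_{q,\mathbf{c}},\psi_{q,\mathbf{c}}\rangle\gtrsim\lambda_q^{-1/2}$. In the anti-Wick picture this is exactly the computation in Lemma~\ref{lma.b3}: the Weyl symbol $(V^{2})_B*\delta_{\sqrt{2q+1}}$ has $L^\infty$ norm $\sim\lambda_q^{-1/2}$ regardless of how fast $V$ decays, because the rate is dictated by the $1/k$ angular averaging, not by the decay exponent.

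The paper avoids this by using Birman--Schwinger, which produces the sandwich $|V|^{1/2}P_k|V|^{1/2}$ with $P_k$ flanked on \emph{both} sides, so that only the genuine $O(\lambda_k^{-1/2})$ bound of Theorem~\ref{th13}(i) for $\|P_k|V|P_k\|$ is needed, together with a near/far splitting of $R_0(\lambda)$. Your Feshbach route can be rescued, but not by the crude bound $\|P_qV\phi\|\le\|P_qV\|$: one has to keep the full Schur complement
\[
\mu-\lambda_q\in\sigma\Bigl(P_qVP_q-P_qVQ\,(QHQ-\mu)^{-1}\,QVP_q\Bigr),
\]
where both terms now carry $P_q$ on each side. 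The first is $O(\lambda_q^{-1/2})$ by Theorem~\ref{th13}(i); for the second, $\|P_qVQVP_q\|\le\|P_qV^{2}P_q\|=O(\lambda_q^{-1/2})$ suffices, once one controls $(QHQ-\mu)^{-1}$ on $\mathrm{Ran}\,Q$ (which needs its own argument when $\|V\|_\infty\ge B$). As written, however, your proposal has a genuine gap.
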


The proof is given in Section~\ref{s3}.

\begin{remark} \label{r1}
\begin{enumerate}[(i)]
\item Obviously, the above estimate $O(\lambda_q^{-1/2})$ for the
width of the $q$th cluster can also be written as $O(q^{-1/2})$;
however, as we will see, $\lambda_q$ provides a more natural scale
than $q$. \item Simple considerations (see Remark~3.2 in
\cite{korpu}) show that the estimate $O(\lambda_q^{-1/2})$ cannot
be improved: the eigenvalue clusters have width $\geq
c\lambda_q^{-1/2}$ with $c>0$ (unless $V\equiv0$). This will also
follow from the main result of this paper. \item
Proposition~\ref{p11} was proven in \cite{korpu} for  $V\in
C_0^\infty(\rd)$. The proof we give here not only covers the case
of more general potentials $V$, but also is based on different
ideas than those of \cite{korpu}.
\end{enumerate}
\end{remark}

\subsection{Main result}
Our purpose is to describe the asymptotic density of eigenvalues
in the $q$th cluster as $q \to \infty$. Let $\one_{\mathcal O}$
denote the characteristic function of the set $\mathcal{O} \subset
\re$. For $q \in {\mathbb Z}_+$ and $\mathcal{O} \in {\mathcal
B}(\re)$, the Borel $\sigma$-algebra on $\re$, set
$$
\mu_q(\mathcal{O}) : = {\rm
rank}\,\one_{\lambda_q^{-1/2}\mathcal{O} + \lambda_q}(H).
$$
The measure $\mu_q$ is not finite, and not even $\sigma$-finite,
but if $\mathcal{O}$ is bounded, and its closure does not contain
the origin, we have $\mu_q(\mathcal{O}) < \infty$ for $q$ sufficiently large.
In particular, for any fixed bounded interval
$[\alpha,\beta]\subset \re\setminus\{0\}$ we have
\begin{equation}
\mu_q([\alpha,\beta]) = \sum_{\lambda_q+\alpha\lambda_q^{-1/2}\leq
\lambda\leq \lambda_q+\beta\lambda_q^{-1/2}} \dim\Ker(H-\lambda I)
< \infty
\end{equation}
 for all sufficiently
large $q$. Below we study the asymptotics of the counting measure
$\mu_q$ as $q\to\infty$. In order to describe the limiting
measure, we need to fix some notation. We denote by ${\mathbb T}
\subset \rd$ the circle of radius one, centered at the origin. The
circle ${\mathbb T}$ is endowed with the usual Lebesgue measure
normalized so that $\int_{{\mathbb T}} d \omega = 2\pi$. For
$\omega=(\omega_1,\omega_2)\in\mathbb T$, we denote
$\omega^\perp=(-\omega_2,\omega_1)$. We set
$$
\widetilde{V}(\omega, b)= \frac{1}{2 \pi} \int^{\infty}_{-\infty}
V(b\omega +t\omega^\perp)dt, \quad \omega \in {\mathbb T}, \quad b
\in\R.
$$
Thus, $\widetilde{V}$ is (up to a factor) the Radon transform of
$V$.
In order to make our notation more concise, we find convenient to introduce
the Banach space  $X_\rho$ of all potentials $V\in C(\re^2,\re)$
that satisfy \eqref{rho} equipped with the norm
\begin{equation}
\norm{V}_{X_\rho} = \sup_{\mathbf x\in\re^2} \jap{\mathbf x}^\rho \abs{V(\mathbf x)}.
\label{xrho}
\end{equation}
Using this notation, by
an elementary calculation one finds
\begin{equation}
\abs{\widetilde{V}(\omega,b)}\leq  C_\rho \norm{V}_{X_\rho} \jap{b}^{1-\rho},
\quad b\in\R.
\label{a4}
\end{equation}
Define the measure $\mu$  by
$$
\mu(\mathcal{O})= \frac{1}{2\pi} \, \left|
\widetilde{V}^{-1}(B^{-1}\mathcal{O})\right|, \quad {\mathcal O} \in
{\mathcal B}(\re),
$$
where $|\cdot|$ stands for the  Lebesgue measure (on ${\mathbb T}
\times \re$).
Evidently, for any bounded interval $[\alpha, \beta]
\subset \re\setminus \{0\}$ we have $\mu([\alpha, \beta]) <
\infty$. Moreover, estimate \eqref{a4} implies that
 $\mu$ has a bounded support in
$\re$,  and
\begin{equation}
\int_\re \abs{t}^\ell d\mu(t)<\infty, \quad \forall
\; \ell>1/(\rho-1). \label{a9}
\end{equation}
 Our main result is:
\begin{theorem}\label{th12}
Let $V\in C(\rd)$ be a continuous function that satisfies
\eqref{rho}. Then, for any function $\varrho \in C^{\infty}_0(\re
\setminus\{0\})$, we have
\begin{equation}
\lim_{q \to \infty} \lambda_q^{-1/2} \int_\R
\varrho(\lambda)d\mu_q(\lambda) = \int_\R
\varrho(\lambda)d\mu(\lambda). \label{11a}
\end{equation}
\end{theorem}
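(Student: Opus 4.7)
By Proposition~\ref{p11} the cluster of $\sigma(H)$ near $\lambda_q$ is contained in an interval of length $O(\lambda_q^{-1/2})$ and, for $q$ large, is separated from the neighbouring clusters by a gap $\simeq B$. Let $\Pi_q$ denote the corresponding spectral projection of $H$. Writing $\Pi_q - P_q$ as a contour integral of the resolvent difference $(H-z)^{-1}-(H_0-z)^{-1}$ and iterating the second resolvent identity, one obtains
$$
\Pi_q H\Pi_q - \lambda_q\Pi_q = P_qVP_q + R_q, \qquad \|R_q\|_{\mathfrak{S}^1}\longrightarrow 0,
$$
where the trace-norm control of $R_q$ relies on decay estimates for the off-diagonal blocks $P_jVP_q$ ($j\ne q$) derived from the Laguerre-polynomial form of the integral kernel of $P_q$ together with~\eqref{rho}. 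The operator-Lipschitz estimate $\|\varrho(A)-\varrho(B)\|_{\mathfrak{S}^1}\le C_\varrho\|A-B\|_{\mathfrak{S}^1}$, valid for $\varrho\in C_0^\infty(\re\setminus\{0\})$, then gives
$$
\int_\re\varrho\,d\mu_q = \mathrm{Tr}\,\varrho\bigl(\lambda_q^{1/2}P_qVP_q\bigr) + o\bigl(\sqrt{\lambda_q}\bigr),
$$
reducing the theorem to the claim $\lambda_q^{-1/2}\,\mathrm{Tr}\,\varrho(\lambda_q^{1/2}P_qVP_q)\to\int\varrho\,d\mu$.

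\textbf{Step 2: Anti-Wick analysis of $P_qVP_q$.} Choose coherent states $\psi_{q,\mathbf z}$ in $\mathrm{Ran}\,P_q$, labelled by the guiding centre $\mathbf z\in\rd$, giving the resolution $P_q=\frac{B}{2\pi}\int_{\rd}|\psi_{q,\mathbf z}\rangle\!\langle\psi_{q,\mathbf z}|\,d\mathbf z$. Then $P_qVP_q$ is a Berezin--Toeplitz (anti-Wick) operator whose Berezin symbol $\overline V_q(\mathbf z)=\langle\psi_{q,\mathbf z}|V|\psi_{q,\mathbf z}\rangle$ is, for $q$ large, close to the mean value of $V$ over the classical cyclotron circle of radius $r_q=\sqrt{\lambda_q}/B$ centred at $\mathbf z$. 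A Weyl-type trace asymptotic for anti-Wick operators on the Landau subspace then produces
$$
\mathrm{Tr}\,\varrho\bigl(\lambda_q^{1/2}P_qVP_q\bigr) = \frac{B}{2\pi}\int_{\rd}\varrho\bigl(\lambda_q^{1/2}\overline V_q(\mathbf z)\bigr)\,d\mathbf z + o\bigl(\sqrt{\lambda_q}\bigr).
$$

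\textbf{Step 3: Tangent-line limit and identification of $\mu$.} Parametrising $\mathbf z=(r_q+b)\omega$ with $\omega\in\mathbb T$, $b\in\re$, the cyclotron circle flattens near its point closest to the origin (the only region where $V$ is non-negligible), and elementary geometry combined with \eqref{rho} yields
$$
r_q\,\overline V_q\bigl((r_q+b)\omega\bigr)\longrightarrow\widetilde V(\omega,b)\qquad(q\to\infty),
$$
uniformly on compact subsets of $\mathbb T\times\re$. Using $r_q=\sqrt{\lambda_q}/B$ and the Jacobian $d\mathbf z=(r_q+b)\,db\,d\omega$, the integral of Step~2 becomes
$$
\frac{\sqrt{\lambda_q}}{2\pi}\int_{\mathbb T\times\re}\varrho\bigl(B\widetilde V(\omega,b)\bigr)\,db\,d\omega + o\bigl(\sqrt{\lambda_q}\bigr),
$$
and dividing by $\sqrt{\lambda_q}$ reproduces exactly $\int\varrho\,d\mu$ since, by definition, $\mu$ is $(2\pi)^{-1}$ times the pushforward of $db\,d\omega$ under $(\omega,b)\mapsto B\widetilde V(\omega,b)$.

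\textbf{Main obstacle.} The delicate technical work is controlling the various estimates uniformly in $q$ under only the power decay \eqref{rho}: the bound $\|R_q\|_{\mathfrak{S}^1}\to 0$ in Step~1 for $V$ not compactly supported, and the $o(\sqrt{\lambda_q})$ remainders in Steps~2--3 for $|b|\to\infty$. The natural strategy is to establish \eqref{11a} first for $V\in C_0^\infty(\rd)$, where every phase-space integral localises and the Laguerre-kernel estimates are routine, and then approximate a general $V\in X_\rho$ in the $X_\rho$-norm. Continuity of the right-hand side of \eqref{11a} in $V$ follows from \eqref{a4}; for the left-hand side one needs a uniform-in-$q$ bound of the form
$$
\lambda_q^{-1/2}\int_\re|\varrho(\lambda)|\,d\mu_q(\lambda)\le C_\varrho\,\|V\|_{X_\rho}^{\,2},
$$
which follows from a Hilbert--Schmidt estimate on $P_qVP_q$ derived from the decay of the Landau kernel and \eqref{rho}.
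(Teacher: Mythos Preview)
Your geometric picture in Steps~2--3 (cyclotron circles flattening to lines, averaging along orbits) is correct, and the density argument at the end mirrors how the paper passes from $C_0^\infty$ to $X_\rho$. But two steps hide the real work and, as written, fail.

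\textbf{Step 1 breaks down for $1<\rho\le 2$.} In that range $V\notin L^1(\rd)$, and by Theorem~\ref{th13}(ii) one has $P_qVP_q\in S_\ell$ only for $\ell>1/(\rho-1)$; for $V(\bx)=\jap{\bx}^{-\rho}$ one checks $\Tr P_qVP_q=\frac{B}{2\pi}\int V=\infty$, so $P_qVP_q\notin S_1$. Correspondingly $\int_\re|t|\,d\mu(t)=\infty$ by~\eqref{a9}, so $\Pi_q(H-\lambda_q)\Pi_q$ is not trace class either. Hence $\|R_q\|_{S^1}$ is undefined and the operator-Lipschitz reduction cannot be run in $S^1$. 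The paper avoids this by never comparing the operators in trace norm: Lemma~\ref{l31} compares only the \emph{$\ell$-th moments}, showing via a Riesz contour integral and the Neumann series for $(H-z)^{-1}$ that $\Tr\{(H-\lambda_q)^\ell\one_{(\lambda_q-B,\lambda_q+B)}(H)\}=\Tr(P_qVP_q)^\ell+o(\lambda_q^{-(\ell-1)/2})$ for each integer $\ell>1/(\rho-1)$, using only the $S_\ell$ resolvent bounds of Lemma~\ref{lma.d1}.

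\textbf{Step 2 is the theorem in disguise.} The asserted ``Weyl-type trace asymptotic'' $\Tr\varrho(\lambda_q^{1/2}P_qVP_q)=\frac{B}{2\pi}\int\varrho(\lambda_q^{1/2}\overline V_q)+o(\sqrt{\lambda_q})$ is not a citable black box: anti-Wick quantization is not a functional calculus ($\varrho(T_a)\neq T_{\varrho(a)}$), and here there is no honest small parameter governing the commutator remainders --- the norm of $P_qVP_q$ decays like $\lambda_q^{-1/2}$ but its effective rank grows like $\lambda_q^{1/2}$, so the product is $O(1)$. This is exactly where the analysis lives. The paper's route is to compute $\Tr(P_qVP_q)^\ell$ explicitly: after the metaplectic conjugation (Corollary~\ref{fgr1}) $P_qVP_q$ becomes the one-dimensional Weyl operator with symbol $V_B*\Psi_q$; one replaces $\Psi_q$ by $\delta_{\sqrt{2q+1}}$ with Hilbert--Schmidt control (Lemma~\ref{th.b3}) and evaluates the trace of the $\ell$-th power by stationary phase in $2(\ell-1)$ variables (Theorem~\ref{th.b4}, Lemma~\ref{lma.b2}), which is where the Radon transform actually emerges. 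The passage from integer moments to general $\varrho\in C_0^\infty(\re\setminus\{0\})$ is then by Weierstrass approximation of $\varrho(\lambda)/\lambda^{\ell_0}$ on $[-R,R]$, sandwiching $\varrho$ between polynomials $P_\pm$ vanishing to order $\ell_0>1/(\rho-1)$ at the origin (Section~\ref{s32}); this is what makes the argument work uniformly over the full range $\rho>1$.
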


\begin{remark} \label{r2a}
\begin{enumerate}[(i)]
\item
The asymptotics \eqref{11a} can be more explicitly written as
\begin{equation}
\lim_{q \to \infty} \lambda_q^{-1/2}
\Tr \varrho(\sqrt \lambda_q (H-\lambda_q))
=
\frac1{2\pi} \int_{\mathbb T} \, \int_{\R} \,
\varrho(B \widetilde{V}(\omega,b)) \, db \, d\omega. \label{11}
\end{equation}
\item By  standard approximation arguments,
the asymptotics \eqref{11a} can be extended to a wider class of continuous functions $\varrho$.
Further,
it follows from
Theorem~\ref{th12} that if $[\alpha, \beta] \subset \re\setminus
\{0\}$, and $\mu(\{\alpha\})=\mu(\{\beta\})=0$, then
$$
\lim_{q \to \infty}\lambda_q^{-1/2}\mu_q([\alpha,\beta]) =
\mu([\alpha,\beta]).
$$
However, the assumption $\mu(\{\alpha\})=\mu(\{\beta\})=0$ does
not automatically hold, i.e. in general the measure $\mu$ may have
atoms. Indeed, a description of the class of all Radon transforms
$\widetilde{V}(\omega,b)$ of functions $V\in C_0^\infty(\R^2)$ is
well known, see e.g. \cite[Theorem~2.10]{helg}. According to this
description, if $a\in C_0^\infty(\re)$ is an even real-valued
function, then
$\widetilde{V}(\omega,b) : =a(b)$, $b \in \re$, $\omega \in {\mathbb T}$, is a Radon transform
in this class. Of course, if the derivative $a'(b)$ vanishes on
some open interval, then the corresponding measure $\mu$ has an
atom.

\item
If $V\in C_0^\infty(\rd)$, one can prove (see \cite{korpu})
that the trace in the l.h.s. of \eqref{11} has a complete asymptotic
expansion in inverse powers of $\lambda_q^{1/2}$,
but the formulae for the higher order coefficients
of this expansion are not known.\\
\end{enumerate}
\end{remark}

\subsection{Method of proof}
 Let $P_q$ be the orthogonal projection in $L^2(\rd)$ onto the subspace
${\rm Ker}\, (H_0-\lambda_qI)$.
For $\ell\geq1$, let $S_\ell$ be the Schatten-von Neumann class,
with the norm $\norm{\cdot}_\ell$; the usual operator norm is
denoted by $\norm{\cdot}$.

We first fix a natural number $\ell$ and examine the asymptotics of the trace
in the l.h.s. of \eqref{11}
for functions $\varrho\in C_0^\infty(\R)$ such that
$\varrho(\lambda)=\lambda^\ell$ for small $\lambda$.
We have the following
(fairly standard) technical result:
\begin{lemma}\label{l31}
For any real $\ell>1/(\rho-1)$, the operators
$(H-\lambda_q)^\ell \one_{(\lambda_q - B, \lambda_q+B)}(H)$
and $(P_q V P_q)^\ell$ belong to the trace class and
\bel{31}
\Tr\{(H-\lambda_q)^\ell
\one_{(\lambda_q - B,\lambda_q+B)}(H)\}
=
\Tr
(P_q V P_q)^\ell + o(\lambda_q^{-(\ell-1)/2} ), \quad q \to \infty.
\ee
\end{lemma}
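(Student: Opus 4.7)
The plan is to compare $H = H_0 + V$ with its band-diagonal truncation $\tilde H := H_0 + P_q V P_q$. Since $\tilde H$ commutes with $P_q$ and restricts to $\lambda_q I + P_q V P_q$ on $\mathrm{ran}\, P_q$, its spectral projection onto $(\lambda_q - B, \lambda_q + B)$ coincides with $P_q$ for $q$ large enough (once $\norm{P_q V P_q} < B$), and consequently $\Tr(\tilde H - \lambda_q)^\ell P_q = \Tr (P_q V P_q)^\ell$. So the right-hand side of \eqref{31} is exactly this trace, and the lemma reduces to a resolvent comparison between $H$ and $\tilde H$. Trace-class membership of $(P_q V P_q)^\ell$ follows from a standard Schatten-class estimate on $P_q \abs{V}^{1/2}$ via the explicit Laguerre-polynomial integral kernel of $P_q$, whose summability is accommodated by the hypothesis $\ell > 1/(\rho-1)$; the analogous property for $(H - \lambda_q)^\ell \one_{(\lambda_q - B, \lambda_q + B)}(H)$ is then inherited through the comparison.

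Next, using Proposition~\ref{p11} (and its trivial analogue for $\tilde H$), I would choose a circular contour $\Gamma_q \subset \ce$ of radius $B/2$ centered at $\lambda_q$ that, for $q$ large, encloses exactly the $q$-th clusters of both operators. Noting that $\oint_{\Gamma_q} (z-\lambda_q)^\ell (z-H_0)^{-1}\, dz = 0$ for $\ell \geq 1$, this gives
\[
\Tr(H-\lambda_q)^\ell \one_{(\lambda_q - B,\lambda_q + B)}(H) - \Tr(P_q V P_q)^\ell = \frac{1}{2\pi i}\oint_{\Gamma_q} (z-\lambda_q)^\ell \Tr\bigl[(z-H)^{-1} - (z-\tilde H)^{-1}\bigr]\, dz.
\]
Iterating the second resolvent identity with the purely off-band perturbation $W := V - P_q V P_q$, and splitting $(z-\tilde H)^{-1} = P_q (z - \lambda_q - P_q V P_q)^{-1} + R_q(z)$ with $R_q(z) := (z-H_0)^{-1}(I - P_q)$ holomorphic on the interior of $\Gamma_q$ and $\norm{R_q(z)} = O(B^{-1})$, one reduces the right-hand side to a finite sum of explicit traces built from products of $P_q$, $W$, $P_q V P_q$ and $R_q(z)$.

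The main obstacle will be the error estimate, since a direct operator-norm bound supplies only $O(B^{-1}) = O(1)$ per $R_q$ factor, which fails to beat the leading scale $\lambda_q^{-(\ell-1)/2}$. The required gain comes from the facts that $W$ strictly mixes $P_q$ with $\sum_{k \neq q} P_k$, and that the Hilbert--Schmidt estimate $\sum_{k \neq q} \norm{P_q V P_k}_2^2 = \Tr P_q V^2 P_q = (B/2\pi)\norm{V}_{L^2(\rd)}^2$ is uniform in $q$ (a consequence of $\rho > 1$, so $V \in L^2(\rd)$). Combined with the spectral weights $(\lambda_q - \lambda_k)^{-1} = (2B(q-k))^{-1}$ supplied by each $R_q$ and a semiclassical (Laguerre-kernel) analysis of the matrix elements $P_q V P_k$, this yields an extra $\lambda_q^{-1/2}$ factor from each $R_q$-flanked pair of $W$'s, upgrading each correction term from $O(\lambda_q^{-(\ell-1)/2})$ to the required $o(\lambda_q^{-(\ell-1)/2})$.
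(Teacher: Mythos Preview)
Your scheme shares the paper's contour--integral and resolvent--expansion skeleton but compares $H$ to $\tilde H=H_0+P_qVP_q$ instead of to $H_0$. The paper expands $R(z)$ in powers of $VR_0(z)$, observes that the terms with $j<\ell$ are holomorphic inside $\Gamma_q$ and drop out, extracts $\Tr(P_qVP_q)^\ell$ from $j=\ell$ by integration by parts and a residue, and controls the tail $j\geq\ell+1$ through the Schatten estimate
\[
\sup_{z\in\Gamma_q}\bigl\|\,|V|^{1/2}R_0(z)|V|^{1/2}\bigr\|_\ell=O\bigl(q^{-(\ell-1)/2\ell}\log q\bigr),
\]
which is Lemma~\ref{lma.d1} and rests on Theorem~\ref{th13}(ii), i.e.\ the decay $\norm{P_kVP_k}_\ell=O(\lambda_k^{-(\ell-1)/(2\ell)})$. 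That $q$--decay of a genuine Schatten norm, not merely of the operator norm, is what makes the remainder $o(\lambda_q^{-(\ell-1)/2})$.

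Your proposal has a real gap at precisely this point. The ingredients you name---the uniform Hilbert--Schmidt bound $\sum_k\norm{P_qVP_k}_2^2\leq\Tr P_qV^2P_q\leq C$ together with the spectral weights $(\lambda_k-\lambda_q)^{-1}$ coming from $R_q$---do not by themselves deliver the promised extra $\lambda_q^{-1/2}$. Already for $\ell=2$ the first surviving correction in your expansion reduces, after residues, to a term of the form
\[
\Tr\Bigl[P_qVP_q\cdot\sum_{k\neq q}\tfrac{1}{\lambda_k-\lambda_q}\,P_qVP_kVP_q\Bigr],
\]
and estimating it by $\norm{P_qVP_q}\sum_{k\neq q}|k-q|^{-1}\norm{P_qVP_k}_2^2$ yields only $O(\lambda_q^{-1/2})$, the \emph{same} order as the main term, since nothing prevents the HS mass from sitting at $k=q\pm1$. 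To upgrade $O$ to $o$ you need decay of $\norm{P_qVP_k}$ in a Schatten class strictly below $S_2$; that is exactly Theorem~\ref{th13}(ii) (which, incidentally, is obtained in the paper by interpolating Theorem~\ref{th13}(i) against the trivial $S_1$ bound of Lemma~\ref{l21}). Your phrase ``semiclassical (Laguerre--kernel) analysis of the matrix elements $P_qVP_k$'' is a placeholder for this same nontrivial input, not an alternative to it. Once that input is granted, the paper's direct expansion around $R_0$ is at least as clean: your route trades an easy identification of the leading term for residue computations at the unknown eigenvalues of $P_qVP_q$ hidden inside $A(z)$.
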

The proof of this
lemma is given in Subsection~\ref{s3c}.
This lemma essentially reduces the question to the study of the asymptotics
of traces of $(P_q V P_q)^\ell$.
Our main technical result is the
following statement:
\begin{theorem}\label{th13}
Let $V$ satisfy \eqref{rho} and let $B_0>0$.
\begin{enumerate}[\rm (i)]
\item For some $C=C(B_0)$, one has
\begin{equation}
\sup_{q\geq0} \sup_{B\geq B_0} \lambda_q^{1/2}B^{-1}\norm{P_q VP_q}
\leq
C\norm{V}_{X_\rho}.
\label{a6}
\end{equation}
\item
For any real $\ell>1/(\rho-1)$, we have  $P_qVP_q\in S_\ell$, and
for some $C=C(B_0,\ell)$, the estimate
\begin{equation}
\sup_{q\geq0} \sup_{B\geq B_0} \lambda_q^{(\ell-1)/(2\ell)} B^{-1}\norm{P_qVP_q}_{\ell}
\leq C \norm{V}_{X_\rho}
\label{b1}
\end{equation}
holds true.
\item For any integer $\ell>1/(\rho-1)$, we have \bel{12}
\lim_{q\to \infty} \lambda_q^{(\ell-1)/2} \Tr(P_q VP_q)^\ell
=
\frac{B^\ell}{2\pi} \int_{{\mathbb T}} \, \int_{\R}
\;\widetilde{V}(\omega,b)^\ell \, db \, d\omega. \ee
\end{enumerate}
\end{theorem}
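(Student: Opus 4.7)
The plan is to realise $P_qVP_q$ as a Berezin--Toeplitz (anti-Wick) operator on $P_qL^2(\rd)$. Using the magnetic translations $\mathcal T_{\bz}$ (which commute with $H_0$) and a fixed unit vector $\phi_q\in P_qL^2$ chosen so that $\abs{\phi_q(\bx)}^2$ is essentially the uniform arc-length measure on the Larmor circle $\abs{\bx}=r_q$, $r_q:=\lambda_q^{1/2}/B$, smoothed on the transverse scale $B^{-1/2}$, the coherent states $\phi_{\bz,q}:=\mathcal T_{\bz}\phi_q$ form an overcomplete system in $P_qL^2$ with resolution of identity $P_q=\frac{B}{2\pi}\int_{\rd}\Pi_{\bz,q}\,d\bz$, where $\Pi_{\bz,q}$ is the rank-one orthogonal projection onto $\phi_{\bz,q}$. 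Consequently
\[
P_qVP_q=\frac{B}{2\pi}\int_{\rd}V(\bx)\,\Pi_{\bx,q}\,d\bx,
\]
and the density $\abs{\phi_{\bz,q}(\cdot)}^2$ is concentrated on the Larmor circle of radius $r_q$ centred at $\bz$. Everything below is driven by $r_q\to\infty$ while the effective support of $V$ stays on scale $O(1)$.

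For parts (i) and (ii), the plan is to analyse the Berezin (covariant) symbol
\[
\sigma_q(\bz):=\jap{\phi_{\bz,q},V\phi_{\bz,q}}=\int_{\rd}V(\bx)\,\abs{\phi_{\bz,q}(\bx)}^2\,d\bx,
\]
which is, up to an $O(B^{-1/2})$ transverse smoothing, the arc-length average of $V$ around the Larmor circle centred at $\bz$. For large $r_q$, the arc meeting the effective support of $V$ is close to the tangent line at the closest-to-origin point of the circle, so writing $\bz\leftrightarrow(\omega,b)\in\mathbb T\times\R$ via $\omega=\bz/\abs{\bz}$ and $b=\abs{\bz}-r_q$ yields the heuristic $\sigma_q(\bz)\approx r_q^{-1}\widetilde V(\omega,b)$. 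Combining this with \eqref{a4} and using the area element $d\bz\sim r_q\,db\,d\omega$ on the Larmor shell produces pointwise and $L^\ell$ bounds on $\sigma_q$ of exactly the orders demanded by (i) and (ii), which translate to the stated operator norm and Schatten bounds on $P_qVP_q$ via standard Berezin--Toeplitz inequalities.

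For (iii), I would expand
\[
\Tr(P_qVP_q)^\ell=\int_{(\rd)^\ell}\prod_{j=1}^\ell V(\bx_j)\,P_q(\bx_j,\bx_{j+1})\,d\bx_1\cdots d\bx_\ell \qquad(\bx_{\ell+1}=\bx_1)
\]
using the explicit Laguerre--Gaussian kernel of $P_q$, introduce a guiding-centre variable $\bz=\bx_1$ and $\ell-1$ relative displacements, rescale the displacements to the Larmor scale, and invoke the Plancherel--Rotach type asymptotics $L_q(x/q)\,e^{-x/(2q)}\to J_0(2\sqrt x)$ for the Laguerre polynomials. The Laguerre--Gaussian factors then converge to a kernel concentrated on a common Larmor shell; passing to polar coordinates $(\omega,b)$ adapted to the tangent line to that shell at the closest-to-origin point, the $\ell-1$ displacement integrations collapse in the limit to one-dimensional integrations along that tangent line, producing exactly $\widetilde V(\omega,b)^\ell$. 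The accompanying Jacobian $r_q\,db\,d\omega$ together with the density-of-states factor $B/(2\pi)$ produce the prefactor $B^\ell\lambda_q^{-(\ell-1)/2}/(2\pi)$ in \eqref{12}.

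The central difficulty, common to all three parts but sharpest for (iii), is the two-scale behaviour of $P_q$: oscillation on the transverse scale $B^{-1/2}$ together with ``range'' $r_q\to\infty$, so that its kernel is not uniformly absolutely integrable and the above heuristics require careful justification. The Plancherel--Rotach asymptotics must be applied with error bounds compatible with only the polynomial decay \eqref{rho} of $V$, and the tail contributions from $\abs{\bz}$ away from the Larmor shell must be shown to be negligible uniformly in $q$. The threshold $\ell>1/(\rho-1)$ in (ii) and (iii) is exactly the integrability condition in $b$ for $\abs{\widetilde V(\omega,b)}^\ell$ given \eqref{a4}, and therefore determines the sharp range of $\ell$ for which this analysis goes through.
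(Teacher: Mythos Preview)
Your plan for parts (i) and (ii) contains a genuine gap: bounds on the covariant (Berezin, lower) symbol $\sigma_q(\bz)=\jap{\phi_{\bz,q},V\phi_{\bz,q}}$ do \emph{not} translate into upper bounds on $\norm{P_qVP_q}$ or $\norm{P_qVP_q}_\ell$. The standard anti-Wick inequalities run the other way: for an operator with \emph{contravariant} symbol $s$ one has $\norm{A}\le\norm{s}_\infty$ and $\norm{A}_\ell^\ell\le C\norm{s}_{L^\ell}^\ell$, whereas for the covariant symbol only $\sup_{\bz}\abs{\sigma_q(\bz)}\le\norm{A}$ holds (and Berezin--Lieb gives lower, not upper, bounds on traces). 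In your own representation $P_qVP_q=\tfrac{B}{2\pi}\int V(\bx)\,\Pi_{\bx,q}\,d\bx$ the contravariant symbol is $V$ itself, so the anti-Wick bound yields only $\norm{P_qVP_q}\le\norm{V}_\infty$, with no $\lambda_q^{-1/2}$ gain. Your correct heuristic $\sigma_q\sim r_q^{-1}\widetilde V$ identifies the right mechanism (averaging over large Larmor circles), but it is attached to the wrong symbol to close the argument.

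The paper circumvents this by a metaplectic transform that turns $P_qVP_q$ into a one-dimensional Weyl operator $\Op(V_B*\Psi_q)$; the decay then comes from the quantitative Laguerre--Bessel comparison
\[
\bigl|\Lag_q(x)e^{-x/2}-J_0(\sqrt{(4q+2)x})\bigr|\le C\bigl(q^{-3/4}x^{5/4}+q^{-1}x^3\bigr),
\]
which replaces $\Psi_q$ by the arc-length measure $\delta_{\sqrt{2q+1}}$ on a large circle, after which a Calder\'on--Vaillancourt estimate on $\Op(V_B*\delta_{\sqrt{2q+1}})$ gives the $\lambda_q^{-1/2}$ bound. The Weyl symbol, unlike the lower symbol, does control the operator norm from above. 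Part (ii) is then obtained not from an $L^\ell$ bound on any symbol but by Calder\'on--Lions interpolation between the trace-class estimate at $\ell=1$ (available directly from \eqref{gr18} when $\rho>2$) and the operator-norm bound of part (i); your outline has no analogue of this step.

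For part (iii) your strategy is closer in spirit to the paper's: the paper also performs the Laguerre$\to$Bessel replacement and then computes $\Tr\bigl(\Op(V_B*\delta_{\sqrt{2q+1}})\bigr)^\ell$ exactly, via the Weyl product formula on the Fourier side followed by a stationary-phase lemma on $\mathbb T^\ell\times\R^{2(\ell-1)}$. Your ``collapse to the tangent line'' is precisely the geometric content of that stationary-phase computation, but as written it is heuristic; you also omit the density argument that extends the result from $V\in C_0^\infty(\rd)$ to all $V\in X_\rho$.
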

Although in our main Theorem~\ref{th12} the strength $B$ of the magnetic
field is assumed to be fixed, we make the dependence on $B$
explicit in the estimates \eqref{a6}, \eqref{b1}, as these estimates
are of an  independent  interest  (see e.g. \cite{dp}), and can be used
in the study of other asymptotic regimes.

The proof of Theorem~\ref{th13} consists of two steps. In
Section~\ref{s4} we establish the unitary equivalence of the
Berezin-Toeplitz operator $P_qVP_q$ to a certain generalized
anti-Wick pseudodifferential operator ($\Psi$DO) whose symbol
$V_B$ is defined explicitly below in \eqref{sof30}. Further, in
Section~\ref{sec.c} we study this $\Psi$DO, prove appropriate
estimates, and analyze its  asymptotic behavior as $q \to \infty$.

A combination of
\eqref{31} and \eqref{12} essentially yields
\eqref{11a} for a function $\varrho\in C_0^\infty(\R)$ such that
$\varrho(\lambda)=\lambda^\ell$ for small $\lambda$. After this,
the main result follows by an application of the Weierstrass'
approximation theorem; this argument is given in
Subsection~\ref{s32}.

\begin{remark}\label{rmk}
In \cite{korpu} the limit \eqref{12} was computed for $\ell=1,2$,
but the result was written in a form not suggestive of the general
formula.
\end{remark}

\subsection{Semiclassical interpretation}
Consider the classical Hamiltonian function
    \bel{gr1}
{\mathcal H}(\bxi,\bx) = (\xi+\tfrac12 B y)^2+(\eta -\tfrac12
Bx)^2, \quad \bxi : = (\xi,\eta) \in \rd, \quad \bx : = (x,y) \in
\rd,
    \ee
in the phase space $T^* \rd = \re^4$ with the standard symplectic
form. The projections onto the configuration space of the orbits
of the Hamiltonian flow of ${\mathcal H}$ are circles of radius
$\sqrt{E}/B$, where $E>0$ is the value of the energy corresponding
to the orbit. The classical particles move around these circles
with  period $T_B=\pi/B$. The set of these orbits can be
parameterized by the energy $E>0$ and the center $\mathbf
c\in\re^2$ of a circle. Let us denote the path in the
configuration space corresponding to such an orbit by
$\gamma(\mathbf c,E,t)$, $t\in[0,T_B)$, and set
\begin{equation}
\langle V \rangle (\mathbf c,E)  = \frac1{T_B} \int_0^{T_B}
V(\gamma(\mathbf c,E,t))dt, \quad T_B=\pi/B. \label{a1}
\end{equation}

For an energy $E>0$, consider the set $M_E$ of all orbits with
this energy. The set $M_E$ is a smooth manifold with coordinates
$\mathbf c\in\re^2$. It can be considered as the quotient of the
constant energy surface
$$
\Sigma_E = \{(\bxi,\bx) \in \re^4 \mid {\mathcal H}(\bxi,\bx)=E\}
$$
with respect to the flow of ${\mathcal H}$. Restricting the
standard Lebesgue measure of $\re^4$ to $\Sigma_E$ and then taking
the quotient, we obtain the measure $B\, dc_1\, dc_2$ on $M_E$. An
elementary calculation shows that the r.h.s. of \eqref{11} can be
rewritten as
\begin{equation}
\frac1{2\pi} \int_{\mathbb T}  \int_{\R}
\varrho(B\widetilde{V}(\omega,b)) db \, d\omega= \frac1{2\pi}
\lim_{E\to\infty} \frac1{\sqrt{E}}\int_{\re^2} \varrho(\sqrt{E}\,
\langle V \rangle (\mathbf c,E)  )  \, B\, dc_1\, dc_2. \label{a2}
\end{equation}
The basis of this calculation is the fact that as $E\to\infty$,
the radius $\sqrt{E}/B$ of the circles representing the classical
orbits tends to infinity. Thus, the classical
orbits approximate straight lines on any compact domain of the
configuration space.

Given \eqref{a2}, we can rewrite the main result as
\begin{equation}
\lim_{q \to \infty} \frac{1}{\sqrt \lambda_q} \Tr \varrho(\sqrt
\lambda_q (H-\lambda_q)) = \frac1{2\pi} \lim_{E\to\infty}
\frac1{\sqrt{E}}\int_{\re^2} \varrho(\sqrt{E}\, \langle V \rangle
(\mathbf c,E) ) \, B\, dc_1\, dc_2. \label{a3}
\end{equation}
This agrees with the semiclassical intuition. Formula \eqref{a3}
corresponds to the well known ``averaging principle'' for systems
close to integrable ones. This principle states that a good
approximation is obtained if one replaces the original
perturbation by the one which results by averaging the original
perturbation along the orbits of the free dynamics. This method is
very old; quoting V.~Arnold \cite[Section 52]{arn}: ``In studying
the perturbations of planets on one another, Gauss proposed to
distribute the mass of each planet around its orbit proportionally
to time and to replace the attraction of each planet by the
attraction of the ring so obtained''.

\subsection{Related results}
\subsubsection{Asymptotics for eigenvalue clusters for manifolds
with closed geodesics} In spectral theory, results of this type
originate from the classical work by A.~Weinstein \cite{wein} (see
also \cite{CdV}). Weinstein considered
the operator $-\Delta_{\mathcal M} + V$, where $\Delta_{\mathcal
M}$ is the Laplace-Beltrami operator on a compact Riemannian
manifold ${\mathcal M}$ with periodic bicharacteristic flow (e.g.
a sphere), and $V \in C({\mathcal M};\re)$.
In this case, all eigenvalues of $\Delta_{\mathcal M}$ have
finite multiplicities which however grow with the
eigenvalue number. Adding the perturbation $V$ creates clusters of
eigenvalues. Weinstein proved that the asymptotic density of
eigenvalues in these clusters can be described by the
density function obtained by  averaging $V$ along the closed geodesics on ${\mathcal M}$.
Let us illustrate these results with the case  ${\mathcal M}
 = {\mathbb S}^2$. It is well known that the eigenvalues of $-\Delta_{{\mathbb
 S}^2}$ are $\lambda_{q} = q (q + 1)$, $q \in {\mathbb
 Z}_+$, and their multiplicities are $d_q = 2
 q + 1$. For $V \in C({\mathbb S}^2; \re)$ set
 $$
 \widetilde{V}(\omega) : = \frac{1}{2\pi} \int_0^{2\pi} V({\mathcal
 C}_\omega(s)) ds,  \quad \omega \in {\mathbb
 S}^2,
 $$
 where ${\mathcal C}_\omega(s) \in {\mathbb S}^2$ is the great
 circle orthogonal to $\omega$, and $s$ is the arc length on this
 circle. Then for each $\varrho \in C_0^{\infty}({\mathbb S}^2;
 \re)$ we have
    \bel{end20}
 \lim_{q \to \infty}\frac{\Tr\varrho(-\Delta_{{\mathbb
 S}^2} + V - \lambda_q)}{d_q} = \int_{{\mathbb
 S}^2} \varrho(\widetilde{V}(\omega)) dS(\omega)
    \ee
 where $dS$ is the normalized Lebesgue measure on ${\mathbb
 S}^2$. Since  ${\mathbb S}^2$ can be identified
 with its set of oriented geodesics ${\mathcal G}$, the r.h.s. of \eqref{end20} can be interpreted as an integral
 with respect to the $SO(3)$--invariant normalized measure on
 ${\mathcal G}$.
 This result admits  extensions to the case ${\mathcal M} =
 {\mathbb S}^n$ with $n>2$, and, more generally, to the case where
 ${\mathcal M}$ is a compact symmetric manifold of rank 1 (see
 \cite{wein, CdV}).\\

In more recent works \cite{tvb, vb1, urvb}, the relation between
the quantum Hamiltonian of the hydrogen atom and the
Laplace-Beltrami operator on the unit sphere is exploited, and the
asymptotic distribution within the eigenvalue clusters of the
perturbed Hamiltonian hydrogen atom is investigated. The
asymptotic density of eigenvalues in these clusters was described
in terms of the perturbation averaged along the trajectories of
the unperturbed dynamics (i.e. the solutions to the Kepler
problem).

Among the main technical tools used in \cite{tvb, vb1, urvb} which
originate from \cite{gp, tw}, are the Bargmann-type
representations of the particular quantum Hamiltonians considered,
implemented via  the  so-called Segal-Bargmann  transforms. In our
analysis generalized coherent states and associated anti-Wick
\pdos  closely related to the Bargmann representation and the
Segal-Bargmann transform appear again in a natural way (see
Section~\ref{s4}), although their role is different from the one
of their counterparts in  \cite{tvb, vb1, urvb}.

Although this paper is inspired by \cite{wein,tvb, vb1,urvb}, much
of our construction (see Section~\ref{s3}) is based on the
analysis of \cite{korpu}.
    In \cite{korpu} it was proven that for $V\in C_0^\infty(\rd)$ the trace
    in the l.h.s. of \eqref{12} has
    complete asymptotic expansions
    in inverse powers of $\lambda_q^{1/2}$.
    However, the coefficients of this expansion have
    not been computed  explicitly; see Remark~\ref{rmk} above.

\subsubsection{Strong magnetic field asymptotics} It is
useful to compare our main result with the asymptotics as $B
\to \infty$ of the eigenvalues of $H$. It has been found in
\cite{r1} (see also \cite{ivrii}) that
    \bel{gr50a}
    \lim_{B \to \infty} B^{-1} \Tr \varrho(H-\lambda_q) = \frac{1}{2\pi} \int_{\rd} \varrho(V(\bx)) d\bx =
    \int_\re \varrho(t) dm(t)
        \ee
        where $\varrho \in C_0^{\infty}(\re\setminus \{0\})$, $q \in {\mathbb Z}_+$, $V \in L^p(\rd)$, $p>1$,
        and $m(\mathcal{O}) : = \frac{1}{2\pi} \left|V^{-1}(\mathcal{O})\right|$,
        $\mathcal{O} \in {\mathcal B}(\re)$. Similarly to Theorem \ref{th12},
        the proof of \eqref{gr50a} is based on an analogue of Theorem \ref{th13} (i) -- (ii)
        (see Lemma \ref{l21} below), and the asymptotic relations
        \bel{gr51a}
    \lim_{B \to \infty} B^{-1} \Tr (P_q V P_q)^\ell = \lim_{B \to \infty} B^{-1}
    \Tr P_q V^\ell P_q  = \frac{1}{2\pi} \int_{\rd} V(\bx)^{\ell} d\bx, \quad q \in {\mathbb Z}_+,
        \ee
        with $V \in C_0^\infty(\rd)$ and $\ell \in \mathbb{N}$, close in spirit to \eqref{12}.
        Since $B m([\alpha, \beta]) = \frac{1}{2\pi} \left|V_B^{-1}([\alpha, \beta])\right|$,
        $[\alpha, \beta] \subset \re \setminus \{0\}$, where $V_B$ (see \eqref{sof30})
        is the symbol of the generalized anti-Wick \pdo to which $P_q V P_q$ is unitarily equivalent,
        we find that \eqref{gr50a} is again a result of semiclassical nature. However, \eqref{gr51a}
        implies that in the strong magnetic field regime  the main asymptotic terms of $\Tr (P_q V P_q)^\ell$ and $\Tr P_q V^\ell P_q$ coincide, and hence in the first approximation the  commutators $[V, P_q]$
        are negligible, while \eqref{12} shows
        that obviously this is not the case in the high energy  regime considered in
        the present article. Hence, Theorem~\ref{th12} retains ``more quantum flavor" than \eqref{gr50a},
        and hence its proof is technically much more involved.

\subsubsection{The spectral density of the scattering matrix for high
energies} In the recent work \cite{bupu} inspired by this paper,
D. Bulger and A. Pushnitski considered the scattering matrix
$S(\lambda)$, $\lambda>0$, for the operator pair $(-\Delta + V,
-\Delta)$ where $\Delta$ is the standard Laplacian acting in
$L^2(\re^d)$, $d\geq2$, and $V \in C(\re^d;\re)$ is an electric
potential which satisfies an estimate analogous to \eqref{rho}.
Although the methods applied in \cite{bupu} are different from
ours, it turned out that the asymptotics as $\lambda \to \infty$
of the eigenvalue clusters for $S(\lambda)$ are written in terms
of the $X$-ray transform of $V$ in a manner similar to
\eqref{11a}.

\section{Unitary equivalence of Berezin-Toeplitz operators and generalized anti-Wick \pdos}\label{s4}

\subsection{Outline of the section}
\label{ss21} From methodological point of view, this section plays a
central role in the proof of Theorem \ref{th12}. Its principal goal
is to establish the unitary equivalence  between the
Berezin-Toeplitz operators $P_qVP_q$, $q \in {\mathbb Z}_+$, and
some generalized anti-Wick \pdos ${\rm Op}^{aw}_q(V_B)$ whose symbol
$V_B$ is defined explicitly in \eqref{sof30}. This equivalence is
proved in Theorem \ref{th.b2} below. The \pdos ${\rm Op}^{aw}_q$
introduced in Subsection \ref{ss24}, are quite similar to the
classical anti-Wick operators ${\rm Op}^{aw}_0$ (see \cite[Chapter
V, Section 2]{beshu}, \cite[Section 24]{shu}); the only difference
is that the quantization ${\rm Op}^{aw}_0$ is related to coherent
states built on the first eigenfunction $\varphi_0$ of the harmonic
oscillator \eqref{gr3}, while ${\rm Op}^{aw}_q$, $q \in {\mathbb
N}$,
is related to coherent states built on its $q$th eigenfunction $\varphi_q$. \\
In our further analysis of the operator ${\rm Op}^{aw}_q(V_B)$
performed in Section \ref{sec.c}, we also heavily use the
properties of the Weyl symbol of this operator. Thus, in
Subsections \ref{ss21} -- \ref{ss22} we introduce the Weyl
quantization ${\rm Op}^{w}$, and in Subsection \ref{ss25} we
briefly discuss its relation to ${\rm Op}^{aw}_q$. In particular,
we show that ${\rm Op}^{aw}_q(s) = {\rm Op}^{w}(s * \Psi_q)$ where
$s$ is a symbol from an appropriate class, and $2\pi \Psi_q$ is
the Wigner function associated with $\varphi_q$, defined
explicitly in \eqref{17a}. Therefore, the Berezin-Toeplitz
operator $P_qVP_q$, $q \in {\mathbb Z}_+$, with domain $P_q
L^2(\rd)$, is unitarily equivalent to  ${\rm Op}^{w}(V_B *
\Psi_q)$ (see Corollary \ref{fgr1}).

\subsection{Weyl \pdos} \label{ss22}
Let $d \geq 1$. Denote by  ${\mathcal S}(\re^d)$ the Schwartz
class, and by ${\mathcal S}'(\re^d)$ its dual class.

\begin{pr} \label{prvbp1a}
{\rm  \cite[Lemma 18.6.1]{ho}} Let  $s \in {\mathcal
S}'(\re^{2d})$. Assume that $\hat{s} \in L^1(\re^{2d})$ where
$\hat{s}$ is the Fourier transform of $s$, introduced explicitly
in \eqref{end2} below.
 Then the
operator ${\rm Op}^w(s)$ defined initially as a mapping from the
 ${\mathcal S}(\re^d)$ into  ${\mathcal
S}'(\re^d)$ by
    \bel{gr52}
\left({\rm Op}^w(s)u\right)(x) = (2\pi)^{-d} \int_{\re^d}
\int_{\re^d} s\left(\frac{x+x'}{2},\xi\right)e^{i(x-x')\cdot \xi}
u(x')
 dx'd\xi, \quad x \in \re^d,
    \ee
extends uniquely to an operator bounded in $L^2(\re^d)$. Moreover,
    \bel{end3}
\|{\rm Op}^w(s)\| \leq (2\pi)^{-d} \|\hat{s}\|_{L^1(\re^{2d})}.
    \ee
\end{pr}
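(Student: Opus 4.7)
The plan is to reduce the bound to the trivial case of plane-wave symbols, for which $\mathrm{Op}^w$ produces unitary operators. Using the Fourier inversion formula fixed in \eqref{end2}, the assumption $\hat{s}\in L^1(\re^{2d})$ lets us represent
\[
s(y,\eta) = c_d \int_{\re^{2d}} \hat{s}(\tau,z)\, e^{i(y\cdot\tau + \eta\cdot z)}\, d\tau\, dz
\]
as an absolutely convergent superposition of plane waves (where $c_d$ is the Fourier-inversion constant consistent with \eqref{end2}).

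The first step is therefore to compute $\mathrm{Op}^w(s_{\tau,z})$ for the pure plane wave $s_{\tau,z}(y,\eta) := e^{i(y\cdot\tau + \eta\cdot z)}$. Substituting into \eqref{gr52} and performing the $\xi$-integral (which produces $(2\pi)^d\,\delta(x-x'+z)$), one obtains, after the subsequent $x'$-integral,
\[
\bigl(\mathrm{Op}^w(s_{\tau,z})u\bigr)(x) = e^{i(x+z/2)\cdot\tau}\, u(x+z), \qquad u\in\mathcal S(\re^d).
\]
This is (up to a phase) a composition of a translation and a modulation, i.e.\ a Weyl--Heisenberg operator, hence unitary on $L^2(\re^d)$ with norm exactly~$1$.

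The second step is to combine the two ingredients. By linearity of $\mathrm{Op}^w$ and Fubini (applied in the weak sense against test functions $u,v\in\mathcal S(\re^d)$),
\[
\bigl(\mathrm{Op}^w(s)u\bigr)(x) = c_d \int_{\re^{2d}} \hat{s}(\tau,z)\, e^{i(x+z/2)\cdot\tau}\, u(x+z)\, d\tau\, dz,
\]
and Minkowski's integral inequality, using that each plane-wave piece has operator norm $1$, yields
\[
\|\mathrm{Op}^w(s)u\|_{L^2} \leq c_d \int_{\re^{2d}} |\hat{s}(\tau,z)|\, \|u\|_{L^2}\, d\tau\, dz = c_d\, \|\hat{s}\|_{L^1(\re^{2d})}\, \|u\|_{L^2},
\]
which is the estimate \eqref{end3} with the constant coming from the Fourier-inversion convention.

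The only real obstacle is making the formal manipulations rigorous, since \eqref{gr52} is a priori only an oscillatory integral and $s\in\mathcal S'(\re^{2d})$. This is handled by first verifying the identity for the dense subclass of symbols whose Fourier transform lies in $\mathcal S(\re^{2d})$ — where all integrals converge absolutely and Fubini applies — and then using the just-derived bound to extend $\mathrm{Op}^w$ by continuity to all $s$ with $\hat s\in L^1(\re^{2d})$. Uniqueness of the extension then follows since $\mathcal S(\re^d)$ is dense in $L^2(\re^d)$.
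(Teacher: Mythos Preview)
Your proof is correct and is essentially the standard argument (which is also the one in H\"ormander's Lemma 18.6.1, to which the paper simply refers without giving its own proof): decompose the symbol via Fourier inversion into plane waves, observe that each plane wave quantizes to a Weyl--Heisenberg (translation-modulation) operator of norm~$1$, and apply Minkowski. With the Fourier convention \eqref{end2} the inversion constant is $c_d=(2\pi)^{-d}$, which gives exactly the constant in \eqref{end3}.
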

Some of the arguments of our proofs require estimates which are
more sophisticated  than \eqref{end3}. Let $\Gamma(\re^{2d})$, $d\geq 1$,
denote the set of functions $s: \re^{2d} \to {\mathbb C}$ such that
    $$
    \|s\|_{\Gamma(\re^{2d})} : = \sup_{\{\alpha , \beta \in {\mathbb
Z}_+^d \; | \; |\alpha|, |\beta| \leq  [\frac{d}{2}] + 1\}}
\sup_{(x,\xi) \in \re^{2d}} |\partial_x^{\alpha}
\partial_{\xi}^{\beta} s(x,\xi)| < \infty.
    $$
    \begin{pr} \label{prvbp1}
{\rm \cite[Corollary 2.5 (i)]{abd}} There exists a constant $c_0$
such that for any $s \in \Gamma(\re^{2d})$, $d\geq 1$, we have
$$
     \|{\rm Op}^w(s)\| \leq c_0\|s\|_{\Gamma(\re^{2d})}.
    $$
\end{pr}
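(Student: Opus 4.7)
The plan is to use the classical Calder\'on--Vaillancourt strategy, based on a phase-space partition of unity combined with the Cotlar--Stein almost orthogonality lemma. Fix a nonnegative $\chi \in C_0^\infty(\R^{2d})$ with $\sum_{k \in \ze^{2d}} \chi(z-k) = 1$ for all $z \in \R^{2d}$, and decompose
\begin{equation*}
s = \sum_{k \in \ze^{2d}} s_k, \qquad s_k(z) := \chi(z-k)\, s(z).
\end{equation*}
Each $s_k$ is supported in the unit cube centred at $k$ and satisfies $\norm{s_k}_{\Gamma(\R^{2d})} \le C\norm{s}_{\Gamma(\R^{2d})}$ uniformly in $k$. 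Applying Proposition~\ref{prvbp1a} to each $s_k$ (whose Fourier transform is integrable since $s_k$ is compactly supported with uniformly bounded derivatives of any order up to $[d/2]+1$, after which one can still trade smoothness for decay via the compactness of the support) yields the uniform diagonal bound $\norm{\Op(s_k)} \le C\norm{s}_{\Gamma(\R^{2d})}$.

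The heart of the proof is an almost orthogonality estimate of the form
\begin{equation*}
\norm{\Op(s_k)^*\, \Op(s_l)} + \norm{\Op(s_k)\, \Op(s_l)^*} \le C'\, \norm{s}_{\Gamma(\R^{2d})}^{2}\, (1+\abs{k-l})^{-2M},
\end{equation*}
with $C'$ independent of $k,l$ and $M$ large enough that $\sum_{l \in \ze^{2d}} (1+\abs{l})^{-M}$ converges. To obtain this one uses the Moyal (twisted) product formula, which represents $\Op(\overline{s_k})\Op(s_l)$ as $\Op(\overline{s_k} \# s_l)$, with $\overline{s_k} \# s_l$ given by an oscillatory integral whose phase involves the symplectic form applied to vectors separating the supports of $s_k$ and $s_l$. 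Repeated integration by parts against this phase produces negative powers of $\abs{k-l}$, each at the expense of one derivative falling on $s_k$ or $s_l$; derivatives of the cutoff $\chi$ only contribute harmless bounded factors.

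The delicate point, and the main technical obstacle, is the optimization of the derivative count: a naive counting would require roughly $2d$ integrations by parts per factor, far exceeding the $[d/2]+1$ derivatives afforded by $\norm{\cdot}_{\Gamma(\R^{2d})}$. The sharpness of the threshold $[d/2]+1$ comes from exploiting the specific structure of the Weyl calculus--either by a coherent state/Bargmann decomposition inside each phase-space cell, or equivalently by combining the integration by parts with an $L^2$--$L^\infty$ trick in the integral kernel (essentially a Sobolev embedding $H^{[d/2]+1} \hookrightarrow L^\infty$ in dimension $d$)--so that only derivatives of $s$ of order at most $[d/2]+1$ in each of the $x$ and $\xi$ variables are needed. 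A complete execution along these lines is carried out in \cite[Corollary 2.5]{abd}. Once the almost orthogonality bound is secured, the Cotlar--Stein lemma gives
\begin{equation*}
\norm{\Op(s)} = \Bigl\| \sum_{k \in \ze^{2d}} \Op(s_k)\Bigr\| \le \sup_{k} \sum_{l} \max\!\Bigl( \norm{\Op(s_k)^*\Op(s_l)}^{1/2}, \norm{\Op(s_k)\Op(s_l)^*}^{1/2}\Bigr) \le c_0\, \norm{s}_{\Gamma(\R^{2d})},
\end{equation*}
which is the claimed estimate.
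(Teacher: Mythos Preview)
The paper does not give its own proof of this proposition: it is simply quoted from Boulkhemair \cite[Corollary~2.5(i)]{abd} and used as a black box. So there is nothing in the paper to compare your argument against beyond the bare citation.

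Your outline is the standard Calder\'on--Vaillancourt/Cotlar--Stein strategy and is in the right spirit, but it is a sketch rather than a proof, and at the decisive point---getting the almost-orthogonality estimate using only derivatives of order at most $[d/2]+1$ in each group of variables---you explicitly defer to the very reference the paper cites. In that sense your proposal and the paper are on the same footing: both ultimately rely on \cite{abd}. Two places in your sketch would need real work to stand on their own. First, the diagonal bound via Proposition~\ref{prvbp1a} is not obviously justified: with only $[d/2]+1$ derivatives in $x$ and in $\xi$ (total order), compact support alone does not immediately give $\hat s_k\in L^1(\re^{2d})$, and your parenthetical about ``trading smoothness for decay via the compactness of the support'' is not a proof. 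Second, and more importantly, the sharp derivative count is precisely the content of Boulkhemair's paper; the classical Cotlar--Stein argument with naive integration by parts needs many more derivatives, and the refinements (Gabor-type decompositions, careful kernel estimates) that bring the count down to $[d/2]+1$ are nontrivial. So your write-up is a useful roadmap, but as a self-contained proof it has the same status as the paper's citation.
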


 Further, if $s\in
L^2(\R^{2d})$, then, obviously, the operator $\Op(s)$ belongs to the
Hilbert-Schmidt class, and
\begin{equation}
\norm{\Op(s)}_{2}^2 = \frac{1}{(2\pi)^d}\int_{\R^{2d}} \abs{s(x,\xi)}^2\,
dx \,d\xi. \label{prvb2}
\end{equation}
Next, we describe the well known metaplectic unitary equivalence
of Weyl \pdos   whose symbols are mapped into each other by a linear
symplectic change of the variables.

\begin{pr} \label{p42} {\em \cite[Chapter 7, Theorem A.2]{dsj}}
Let $\kappa: \re^{2d} \rightarrow \re^{2d}$, $d\geq 1$,  be a
linear symplectic transformation, $s_1 \in \Gamma(\re^{2d})$, and
$s_2 : = s_1 \circ \kappa$. Then there exists a unitary operator $U:
L^2(\re^d) \rightarrow L^2(\re^d)$ such that
$$
{\rm Op}^w(s_2) = U^* {\rm Op}^w(s_1) U.
$$
\end{pr}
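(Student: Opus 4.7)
The plan is to build $U$ piece by piece by exploiting the structure of the symplectic group. The real symplectic group $\mathrm{Sp}(2d,\re)$ is generated by three families of elementary transformations: (i) linear changes $\kappa_A(x,\xi) = (Ax, (A^T)^{-1}\xi)$ with $A \in GL(d,\re)$; (ii) symmetric shears $\kappa_C(x,\xi) = (x, \xi + Cx)$ with $C = C^T$; and (iii) the (possibly partial) Fourier swap $\kappa_J(x,\xi) = (-\xi, x)$. It therefore suffices to produce a unitary implementer for each elementary generator and then to take the corresponding product.

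For each generator I would write down the implementer and verify the symbol intertwining directly from the Weyl formula \eqref{gr52}. For (i), the dilation $(U_A u)(x) = |\det A|^{1/2} u(Ax)$ is unitary on $L^2(\re^d)$, and a change of variables in \eqref{gr52} shows $U_A^{-1}\Op(s)U_A = \Op(s \circ \kappa_A)$. For (ii), the multiplier $(U_C u)(x) = e^{-i\langle Cx,x\rangle/2} u(x)$ is unitary; grouping the phase and completing the square in the resulting double integral, followed by a shift of the $\xi$ variable by $C(x+x')/2$, produces the required intertwining. For (iii), one takes the Fourier transform $\mathcal{F}$ (or its partial analogue in the appropriate variables) as the implementer.

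The main technical obstacle is case (iii): one has to verify, by direct manipulation of the oscillatory double integral defining $\Op(s)$, that conjugation by $\mathcal{F}$ truly exchanges position and momentum at the symbol level, with all signs, phases and normalization constants handled correctly. I would first establish this identity on the dense subspace $\mathcal{S}(\re^{2d})$ of symbols, where all integrals converge absolutely and the formal manipulations are licensed by Propositions \ref{prvbp1a} and \ref{prvbp1}, and then extend it to general $s \in \Gamma(\re^{2d})$ using the $L^2$-boundedness from Proposition \ref{prvbp1} together with the fact that $\Gamma(\re^{2d})$ is preserved by composition with any linear automorphism of $\re^{2d}$. Once (i)--(iii) are in hand, the proposition follows by composition: writing $\kappa = \kappa^{(1)} \circ \cdots \circ \kappa^{(N)}$ as a product of elementary generators with implementers $U_k$ and setting $U := U_1 \cdots U_N$, one obtains
$$U^* \Op(s_1) U = \Op(s_1 \circ \kappa^{(1)} \circ \cdots \circ \kappa^{(N)}) = \Op(s_2),$$
which is the claimed identity.
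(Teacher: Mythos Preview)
Your proposal is correct and follows exactly the route the paper indicates: the proposition is quoted from \cite[Chapter 7, Theorem A.2]{dsj} without an in-paper proof, and Remark~\ref{r3}(i) sketches precisely your strategy of factoring $\kappa$ into elementary linear symplectic maps (citing \cite[Lemma 18.5.8]{ho}) and using the explicit metaplectic operator for each factor (citing the proof of \cite[Theorem 18.5.9]{ho}). Your three generators and their implementers are the standard ones, and the density-plus-boundedness extension to $\Gamma(\re^{2d})$ via Proposition~\ref{prvbp1} is the right way to close the argument.
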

\begin{remark} \label{r3}
\begin{enumerate}[(i)]
    \item
  The operator $U$ is called {\em the metaplectic operator} corresponding to the linear symplectic transformation $\kappa$.
There exists a one-to-one correspondence  between metaplectic
operators and linear symplectic transformations, apart from  a
constant factor of modulus 1 (see e.g. \cite[Theorem 18.5.9]{ho}). Moreover, every linear symplectic transformation $\kappa$ is a composition of a finite number of elementary linear symplectic maps (see e.g. \cite[Lemma 18.5.8]{ho}), and for each elementary linear symplectic map there exists an explicit simple  metaplectic operator (see e.g. the proof of \cite[Theorem 18.5.9]{ho}).
    \item Proposition \ref{p42} extends to a large class of not
    necessarily bounded operators. In particular, it holds for Weyl \pdos with
     quadratic symbols.
\end{enumerate}
\end{remark}

\subsection{Generalized anti-Wick \pdos}
\label{ss24}
In this subsection we introduce generalized anti-Wick \pdos.
These operators are a special case of the \pdos  with
contravariant symbols whose theory has been developed in
\cite{ber}.
Introduce the harmonic oscillator
    \bel{gr3}
h : =  - \frac{d^2}{dx^2} + x^2,
    \ee
self-adjoint in $L^2(\re)$.
It is well known
that the spectrum of $h$ is purely discrete and simple, and
consists of the eigenvalues $2q+1$, $q \in {\mathbb Z}_+$, while its associated
real-valued eigenfunctions $\varphi_q$, normalized in $L^2(\re)$, could be written as
    \bel{radi}
    {\varphi}_q(x): =
\frac{{\rm H}_{q}(x) e^{-x^2/2}}{(\sqrt{\pi}2^{q} q!)^{1/2}},
\quad x \in \re, \quad q \in {\mathbb Z}_+,
    \ee
   where
    \bel{prvb28}
{\rm H}_q(x): = (-1)^q e^{x^2/2} \left(\frac{d}{dx} - x\right)^q
e^{-x^2/2}, \quad x \in \re, \quad q \in {\mathbb Z}_+,
    \ee
 are the Hermite polynomials. Introduce the generalized coherent states (see e.g. \cite{rs})
    \bel{10d04}
    \varphi_{q;x,\xi}(y) : =  e^{i\xi
y} \varphi_q(y-x), \quad y \in \re, \quad (x,\xi) \in \rd,
    \ee
    so that $\varphi_q = \varphi_{q;0,0}$. Note that if $f \in L^2(\re)$,
then
\begin{equation} \label{17}
\|f\|^2_{L^2(\re)} = (2\pi)^{-1}\int_{\rd}|\langle f,
\varphi_{q;x,\xi}\rangle|^2dxd\xi
\end{equation}
where $\langle \cdot , \cdot\rangle$ denotes the scalar product in
$L^2(\re)$.
Introduce the orthogonal projection
    \bel{end15}
p_{q;x,\xi} : = |\varphi_{q;x,\xi}\rangle
\langle\varphi_{q;x,\xi}| : L^2(\re) \to L^2(\re), \quad q \in
{\mathbb Z}_+, \quad (x,\xi) \in \rd.
    \ee
Let $s \in L^1(\rd) + L^{\infty}(\rd)$. Define
$$
{\rm Op}_q^{aw}(s): = (2\pi)^{-1} \int_{\rd} s(x,\xi) p_{q;x,\xi}
dx d\xi
$$
as the operator generated in   $L^2(\re)$ by the bounded
sesquilinear form
    \bel{8d03} F_{q,s}[f,g] : = (2\pi)^{-1}
\int_{\rd} s(x,\xi) \langle f, \varphi_{q;x,\xi}\rangle
\overline{\langle g, \varphi_{q;x,\xi}\rangle} dx d\xi, \quad f,g
\in  L^2(\re).
    \ee
    We will call ${\rm Op}_q^{aw}(s)$ operator with {\em
anti-Wick symbol of order $q$} equal to $s$. We introduce these operators only in
the case of dimension $d=1$ since this is sufficient for our purposes; of course, their definition  extends easily to any dimension $d \geq 1$. \\
Note that the quantization ${\rm Op}_q^{aw}$ with $q = 0$
 coincides with the standard anti-Wick one (see e.g.
\cite[Section 24]{shu}). In the following lemma we summarize some
elementary basic properties of generalized anti-Wick \pdos which
follow immediately from the corresponding properties of general
contravariant \pdos.
    \begin{lemma} \label{lgr1} \cite[Section 24]{shu} \cite[Section
5.3]{beshu}
    {\em (i)} Let $s \in L^{\infty}(\rd)$. Then we have
    \bel{8d03a}
    \|{\rm Op}_q^{aw}(s)\| \leq \|s\|_{L^{\infty}(\rd)}.
    \ee
    {\em (ii)} Let $s \in L^{\ell}(\rd)$ with $\ell \in [1,\infty)$. Then we have
    \bel{8d04}
    \|{\rm Op}_q^{aw}(s)\|_\ell^\ell \leq (2\pi)^{-1}
    \|s\|^\ell_{L^\ell(\rd)}.
    \ee
    \end{lemma}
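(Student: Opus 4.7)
The plan is to derive both estimates from the sesquilinear form definition \eqref{8d03} together with the resolution of the identity \eqref{17}, following the pattern established for the classical contravariant (anti-Wick) quantization.

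For part (i), the argument is a direct Cauchy--Schwarz computation. Given $f,g \in L^2(\re)$, I would bound $|s(x,\xi)|$ in \eqref{8d03} by $\|s\|_{L^\infty(\rd)}$ and then apply Cauchy--Schwarz in $L^2(\rd, dxd\xi)$ to the two functions $(x,\xi) \mapsto \langle f, \varphi_{q;x,\xi}\rangle$ and $(x,\xi) \mapsto \langle g, \varphi_{q;x,\xi}\rangle$. Formula \eqref{17} identifies their $L^2(\rd)$-norms with $\sqrt{2\pi}\,\|f\|_{L^2(\re)}$ and $\sqrt{2\pi}\,\|g\|_{L^2(\re)}$ respectively, and after cancelling against the prefactor $(2\pi)^{-1}$ one obtains $|F_{q,s}[f,g]| \leq \|s\|_{L^\infty(\rd)}\|f\|_{L^2(\re)}\|g\|_{L^2(\re)}$, from which \eqref{8d03a} follows.

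For part (ii), I would first handle the endpoint $\ell = 1$ directly from the operator integral ${\rm Op}_q^{aw}(s) = (2\pi)^{-1}\int_{\rd} s(x,\xi)\, p_{q;x,\xi}\, dxd\xi$. Since each $p_{q;x,\xi}$ is a rank-one orthogonal projection onto the unit vector $\varphi_{q;x,\xi}$, we have $\|p_{q;x,\xi}\|_1 = 1$, so the triangle inequality for the trace norm yields $\|{\rm Op}_q^{aw}(s)\|_1 \leq (2\pi)^{-1}\|s\|_{L^1(\rd)}$. Combined with part (i), this exhibits the linear map $s \mapsto {\rm Op}_q^{aw}(s)$ as bounded from $L^1(\rd)$ into the trace class $S_1$ with norm at most $(2\pi)^{-1}$ and from $L^\infty(\rd)$ into the bounded operators on $L^2(\re)$ with norm at most $1$. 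A Riesz--Thorin-type interpolation theorem for Schatten ideals then produces the intermediate estimate \eqref{8d04}.

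The only delicate point is the interpolation step, which is standard; indeed the same inequality for general contravariant symbols is recorded in \cite[Section 5.3]{beshu} and \cite[Section 24]{shu}. For integer $\ell$ one could alternatively avoid interpolation by expanding $\|{\rm Op}_q^{aw}(s)\|_\ell^\ell$ as an iterated integral in the coherent-state parameters and applying H\"older, exploiting that $|\langle \varphi_{q;x,\xi}, \varphi_{q;x',\xi'}\rangle|^2$ integrates to $2\pi$ against each of its arguments; however, the interpolation route has the advantage of handling all $\ell \in [1,\infty)$ uniformly.
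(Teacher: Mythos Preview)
Your argument is correct. Note, however, that the paper does not supply its own proof of this lemma: it is stated with citations to \cite[Section~24]{shu} and \cite[Section~5.3]{beshu} and the remark that the bounds ``follow immediately from the corresponding properties of general contravariant $\Psi$DOs.'' What you have written is precisely the standard proof one finds in those references: Cauchy--Schwarz against the resolution of the identity \eqref{17} for part~(i), the triangle inequality in $S_1$ for the endpoint $\ell=1$ of part~(ii), and then complex interpolation between $L^1\to S_1$ and $L^\infty\to S_\infty$ for the intermediate Schatten estimates. There is nothing to compare against in the paper itself, and your sketch is complete.
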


\subsection{Relation between generalized anti-Wick and Weyl \pdos}
\label{ss25}
For $q \in {\mathbb Z}_+$ set
    \bel{17a}
    \Psi_q(x,\xi) = \frac{(-1)^q}{\pi}
{\rm L}_q(2(x^2 + \xi^2)) e^{-(x^2 + \xi^2)}, \quad (x,\xi) \in
\rd,
    \ee
where
\bel{lagpol}
    {\rm L}_q(t) : = \frac{1}{q!} e^t \frac{d^q(t^q
e^{-t})}{dt^q} = \sum_{k=0}^q \binom{q}{k} \frac{(-t)^k}{k!},
\quad t \in \re, \ee
are the Laguerre polynomials.

\begin{lemma} \label{l41}
For a fixed $(x,\xi) \in \rd$ we have
$p_{q;x,\xi} = \Op (\varsigma_{q;x,\xi})$
where $p_{q;x,\xi}$ is the orthogonal projection defined in \eqref{end15}, and
    \bel{gr50}
\varsigma_{q;x,\xi}(x',\xi'): =
2\pi \Psi_q(x'-x,\xi'-\xi), \quad (x'.\xi´) \in \rd.
    \ee
\end{lemma}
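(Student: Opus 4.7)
The starting point is the general fact that a rank-one operator $|f\rangle\langle g|$ on $L^2(\re)$ with Schwartz-class vectors $f,g$ has Weyl symbol
\[
W_{f,g}(x',\xi') = \int_\re e^{-iv\xi'} f(x'+v/2)\,\overline{g(x'-v/2)}\,dv,
\]
which follows by computing the Schwartz kernel on both sides of \eqref{gr52} and inverting the partial Fourier transform in $\xi$. Applied to $f=g=\varphi_{q;x,\xi}$, I would substitute the definition \eqref{10d04}: the oscillating prefactors combine as $e^{i\xi(x'+v/2)}e^{-i\xi(x'-v/2)} = e^{i\xi v}$, which merely shifts the Fourier variable. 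This shows at once that $W_{\varphi_{q;x,\xi},\varphi_{q;x,\xi}}(x',\xi')$ depends on $(x',\xi')$ only through $(x'-x,\xi'-\xi)$, reducing the problem to the special case $(x,\xi)=(0,0)$. Thus the lemma follows from the identity
\[
\int_\re e^{-iv\xi'} \varphi_q(x'+v/2)\,\varphi_q(x'-v/2)\,dv = 2(-1)^q {\rm L}_q(2(x'^2+\xi'^2))\,e^{-(x'^2+\xi'^2)},
\]
whose right-hand side is precisely $2\pi\Psi_q(x',\xi')$ by \eqref{17a}.

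To prove this identity I would use the generating function for Hermite polynomials. Multiplying the integrand by $t^q/q!$ (with $t$ a complex parameter), summing in $q$, and using the normalization in \eqref{radi} converts the problem, via $\sum_{q\geq 0}\frac{t^q}{q!}\varphi_q(y) = \pi^{-1/4} e^{-y^2/2 + \sqrt{2}\,t y - t^2/2}$ up to a harmless constant, into a purely Gaussian integral that can be evaluated by completing the square. The resulting expression is the known generating function $\sum_{q\geq 0} t^q {\rm L}_q(s) = (1-t)^{-1}\exp(-st/(1-t))$ for the Laguerre polynomials (after a rescaling in $t$), and matching coefficients in $t^q$ yields the claimed formula. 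An alternative (conceptually cleaner but slightly longer) route is to observe that the Weyl symbol of the harmonic oscillator $h$ equals $x^2+\xi^2$, which is invariant under rotations of the $(x,\xi)$-plane; Proposition \ref{p42} then gives that the rotations are implemented by a one-parameter group of unitaries commuting with $h$, hence with the spectral projections $|\varphi_q\rangle\langle\varphi_q|$, so the symbol $W_{\varphi_q,\varphi_q}$ is radial. Setting $r^2=x'^2+\xi'^2$, one then identifies this radial symbol by computing its trace against $e^{-\lambda h}$, or equivalently by evaluating the integral on the $\xi'$-axis and comparing with Mehler's formula.

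The bookkeeping of factors of $2\pi$ and the signs $(-1)^q$ is where small errors tend to creep in; aside from this, no step is delicate. The main obstacle is simply to settle on one of the two computations above and push it through cleanly; I would favor the generating-function approach since it is self-contained and avoids invoking metaplectic covariance for unbounded symbols (a minor technicality in Remark~\ref{r3}(ii)).
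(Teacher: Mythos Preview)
Your reduction to $(x,\xi)=(0,0)$ via translation covariance is correct and cleaner than the paper's presentation, which carries $(x,\xi)$ through the computation. From that point on, however, the two arguments diverge.

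The paper does not use generating functions. Instead, after writing out the integral in terms of Hermite polynomials, it performs a complex contour shift $v=2(t+i(\xi-\xi'))$ and then invokes the tabulated identity
\[
\int_\re e^{-t^2} {\rm H}_q(t+w){\rm H}_q(t-\overline{w})\,dt = \sqrt{\pi}\,2^q q!\,{\rm L}_q(2|w|^2)
\]
(Gradshteyn--Ryzhik 7.377). This is shorter if one is willing to quote the identity, and it handles the $(-1)^q$ and the factor $2$ transparently.

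Your generating-function route is a legitimate alternative, but the description is slightly off: multiplying $\varphi_q(x'+v/2)\varphi_q(x'-v/2)$ by $t^q/q!$ and summing does \emph{not} reduce to the univariate generating function you quote, since the two factors share the same index $q$. What is actually needed is the bilinear generating function (Mehler's formula)
\[
\sum_{q\geq 0} t^q \varphi_q(a)\varphi_q(b)
= \frac{1}{\sqrt{\pi(1-t^2)}}\exp\Bigl(-\frac{(1+t^2)(a^2+b^2)-4tab}{2(1-t^2)}\Bigr),
\]
with weight $t^q$ rather than $t^q/q!$. With this correction your plan goes through: the $v$-integral becomes Gaussian, and the result matches $\sum_q t^q\,2(-1)^q{\rm L}_q(2r^2)e^{-r^2} = \frac{2}{1+t}\exp\bigl(-r^2\frac{1-t}{1+t}\bigr)$ via the Laguerre generating function. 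You in fact mention Mehler's formula in your alternative route, so this is a matter of tidying the write-up rather than a genuine gap. The advantage of your approach is that it is self-contained; the paper's is quicker but relies on a reference.
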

\begin{proof}
Using the well-known relation between the Schwartz kernel of a
linear operator and its Weyl symbol (see e.g \cite[Eq.
(18.5.4)'']{ho}), we find that the Weyl symbol $\varsigma_{q;x,\xi}$
of the projection $p_{q;x,\xi}$ satisfies
    \bel{gr51}
    \varsigma_{q;x,\xi}(x',\xi') =
\int_\re e^{-iv\xi'}\varphi_{q;x,\xi}(x'+v/2) \overline
{\varphi_{q;x,\xi}(x'-v/2)} dv.
    \ee
    By \eqref{radi} and \eqref{10d04},
    $$
    \int_\re e^{-iv\xi'}\varphi_{q;x,\xi}(x'+v/2) \overline
{\varphi_{q;x,\xi}(x'-v/2)} dv =
    $$
    \bel{18}
\frac{1}{\sqrt{\pi}2^q q!} \int_{\re} e^{iv(\xi-\xi')} {\rm
H}_q(x' + \frac{1}{2} v - x) {\rm H}_q(x' - \frac{1}{2} v - x)
e^{-(x' + \frac{1}{2} v - x)^2/2} e^{-(x' - \frac{1}{2} v -
x)^2/2} dv.
    \ee
    Changing the variable of integration
    $v = 2(t + i(\xi -\xi'))$, and bearing of mind the parity of the Hermite polynomial
${\rm H}_q$, we get
$$
\int_{\re} e^{iv(\xi-\xi')} {\rm H}_q(x' + \frac{1}{2} v - x) {\rm
H}_q(x' - \frac{1}{2} v - x) e^{-(x' + \frac{1}{2} v - x)^2/2}
e^{-(x' - \frac{1}{2} v - x)^2/2} dv =
$$
\bel{19} 2(-1)^q e^{-(x'-x)^2 - (\xi'-\xi)^2} \int_{\re} e^{-t^2}
{\rm H}_q(t-(x-x'-i(\xi-\xi'))) {\rm H}_q(t+ x-x'+i(\xi-\xi')) dt.
\ee Employing the relation between the Laguerre polynomials and
the integrals of Hermite polynomials (see e.g. \cite[Eq.
7.377]{grry}), we obtain
$$
\int_{\re} e^{-t^2} {\rm H}_q(t-(x-x'-i(\xi-\xi'))) {\rm
H}_q(t+x-x'+i(\xi-\xi')) dt =
$$
\bel{20z} \sqrt{\pi}2^q q! {\rm L}_q(2((x'-x)^2+(\xi-\xi')^2)).
\ee Putting together \eqref{gr51} -- \eqref{20z}, we obtain
\eqref{gr50}.
\end{proof}
\begin{remark}  \label{r5} Let $\psi \in L^2(\re)$ and $\|\psi\|_{L^2(\re)} =
1$. Then the Weyl symbol of the rank-one orthogonal projection
$|\psi\rangle \langle\psi|$, is called the Wigner function
associated with $\psi$ (see e.g. \cite[Definition 2.2]{tp}). Thus,
Lemma \ref{l41} tells us, in particular, that $2\pi \Psi_q$ is the
Wigner function associated with $\varphi_q$.
    \end{remark}
 Lemma \ref{l41}
immediately entails the following
\begin{follow} \label{f42}
Let $s \in L^1(\rd) + L^{\infty}(\rd) $. Then we have
    \bel{9d05}
    {\rm Op}^{aw}_q(s) = {\rm Op}^{w}(\Psi_q * s).
    \ee
    \end{follow}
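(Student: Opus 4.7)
The plan is to prove the identity by testing both sides against the sesquilinear form. Specifically, I would start from the definition in \eqref{8d03} of ${\rm Op}_q^{aw}(s)$ as the operator associated with the form
$$
F_{q,s}[f,g] = (2\pi)^{-1} \int_{\rd} s(x,\xi)\, \langle f,\varphi_{q;x,\xi}\rangle\,\overline{\langle g,\varphi_{q;x,\xi}\rangle}\, dx\, d\xi,
$$
and observe that the integrand equals $s(x,\xi)\,\langle p_{q;x,\xi} f, g\rangle$, so that formally
$$
\langle {\rm Op}_q^{aw}(s) f, g\rangle = (2\pi)^{-1}\int_{\rd} s(x,\xi) \langle p_{q;x,\xi} f, g\rangle\, dx\, d\xi.
$$

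Next I would invoke Lemma~\ref{l41}, which identifies $p_{q;x,\xi} = \Op(\varsigma_{q;x,\xi})$ with $\varsigma_{q;x,\xi}(x',\xi') = 2\pi\Psi_q(x'-x,\xi'-\xi)$. Using the standard representation of the Weyl quantization in terms of the cross-Wigner distribution $W(f,g)$ (so that $\langle \Op(a)f,g\rangle = (2\pi)^{-1}\int a\, W(f,g)$ for $f,g$ in a suitable dense class, e.g.\ $\mathcal S(\re)$), the formula above becomes
$$
\langle {\rm Op}_q^{aw}(s) f, g\rangle = (2\pi)^{-1}\int_{\rd}\!\! \left( \int_{\rd} s(x,\xi)\, \Psi_q(x'-x,\xi'-\xi)\, dx\, d\xi\right) W(f,g)(x',\xi')\, dx'\, d\xi'.
$$
The inner integral is precisely $(\Psi_q * s)(x',\xi')$, and the right-hand side is $\langle \Op(\Psi_q * s) f, g\rangle$. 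Varying $f,g$ in $\mathcal S(\re)$ yields \eqref{9d05}.

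The one delicate point, and the step that requires care rather than cleverness, is justifying Fubini's theorem for the decomposition $s = s_1 + s_\infty$ with $s_1 \in L^1$ and $s_\infty \in L^\infty$. For $s_1 \in L^1(\rd)$, absolute integrability follows immediately since $\Psi_q$ is a Schwartz function (hence bounded) and $W(f,g)$ is Schwartz when $f,g\in \mathcal S(\re)$. For $s_\infty \in L^\infty(\rd)$, integrability is ensured by the rapid decay of $W(f,g)$ together with the Schwartz-type decay of $\Psi_q$. In both cases the iterated integrals are absolutely convergent, so Fubini applies and the identification of the inner integral with $\Psi_q * s$ is legitimate. Once \eqref{9d05} holds on $\mathcal S(\re)$, the bounds \eqref{8d03a}--\eqref{8d04} of Lemma~\ref{lgr1} on the left and Propositions~\ref{prvbp1a}--\ref{prvbp1} together with \eqref{prvb2} on the right guarantee the operator identity extends to all of $L^2(\re)$.
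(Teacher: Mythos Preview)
Your proof is correct and follows exactly the route the paper has in mind: the paper states the corollary as an immediate consequence of Lemma~\ref{l41} without writing out a proof, and your argument is precisely the natural elaboration of that claim via the cross-Wigner pairing and Fubini. The care you take with the $L^1+L^\infty$ decomposition and the extension from $\mathcal S(\re)$ to $L^2(\re)$ is appropriate and fills in the details the paper omits.
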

    \subsection{Metaplectic mapping of the operators $H_0, P_q$ and $V$}
\label{ss23} For ${\bf x} = (x,y) \in \rd, \;  \bxi = (\xi,\eta)
\in \rd$, set
    \bel{sof22}
     {\varkappa}_{B}({\bf x}, \bxi): =
\left(\frac{1}{\sqrt{B}} (x-\eta), \frac{1}{\sqrt{B}} (\xi-y),
\frac{\sqrt{B}}{2}(\xi+y), -\frac{\sqrt{B}}{2}(\eta+x)\right).
    \ee
Evidently, the transformation ${\varkappa}_B$ is linear and
symplectic. Define the unitary operator ${\mathcal U}_{B}:
L^2(\rd) \to L^2(\rd)$ by
    \bel{133}
    ({\mathcal U}_{B}
u)(x,y): = \frac{\sqrt{B}}{2\pi} \int_{\rd}
e^{i\phi_{B}(x,y;x',y')} u(x',y') dx'dy'
    \ee
    where
$$
\phi_{B}(x,y;x',y'): = B \frac{xy}{2} + B^{1/2}(xy' - yx') - x'y'.
$$
Writing ${\varkappa}_{B}$ as a product of elementary linear
symplectic transformations (see e.g. \cite[Lemma 18.5.8]{ho}), we
can easily  check that
 ${\mathcal U}_B$ is a metaplectic operator corresponding to
 ${\varkappa}_B$. Note that
 $$
 {\mathcal H}\circ\varkappa_B(\bx, \bxi) = B(\xi^2 + x^2), \quad \bx = (x,y) \in \rd, \quad \bxi = (\xi, \eta) \in \rd,
 $$
 where ${\mathcal H}$ is the Weyl symbol of the operator $H_0$ defined in \eqref{gr1}.
 On the other hand, $B(\xi^2 + x^2)$
 is the Weyl
 symbol  of the
 operator $B(h \otimes I_y)$ self-adjoint in $L^2(\rd_{x,y})$
 where  $h$ is
the harmonic oscillator \eqref{gr3},
    acting in $L^2(\re_x)$, and
$I_y$ is the identity operator in $L^2(\re_y)$.  Denote by $p_q =
|\varphi_q\rangle\langle\varphi_q| = p_{q;0,0}$ the orthogonal
projection onto ${\rm Ker}\,(h-2q-1)$, $q \in {\mathbb Z}_+$.
Applying Proposition \ref{p42} with $\kappa= \varkappa_B$, and
bearing in mind Remark \ref{r3} (ii), we obtain the following
\begin{follow} \label{f41}
(i) We have
    \bel{sof20}
{\mathcal U}_{B}^* H_0 {\mathcal U}_{B} = B\left(h \otimes
I_y\right),
    \ee
 \bel{sof21}
{\mathcal U}_{B}^* P_q {\mathcal U}_{B} = p_q \otimes I_y, \quad q
\in {\mathbb Z}_+.
    \ee
(ii) If $V \in \Gamma(\rd)$, then
    \bel{sof32}
    {\mathcal U}_{B}^* V
{\mathcal U}_{B} = {\rm Op}^w({\bf V}_B)
    \ee
where
    \bel{8d01}
    {\bf V}_B (x,y;\xi,\eta) : = V(B^{-1/2}(x-\eta),
B^{-1/2}(\xi-y)), \quad (x,y;\xi,\eta) \in \re^4.
    \ee
\end{follow}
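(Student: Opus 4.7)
\textbf{Proof plan for Corollary \ref{f41}.} The whole statement reduces to Proposition~\ref{p42} (with the extension noted in Remark~\ref{r3}(ii) for unbounded quadratic symbols) once we know two things: first, that $\mathcal{U}_B$ is \emph{the} metaplectic operator implementing $\varkappa_B$; and second, that the Weyl symbols of $H_0$, $P_q$ and multiplication by $V$ behave as expected under the symbol composition $s \mapsto s\circ \varkappa_B$. The first point is the preparatory step already outlined before the Corollary: one decomposes $\varkappa_B$ as a product of elementary linear symplectic maps (partial Fourier transform, symplectic rescalings and shears, as in \cite[Lemma 18.5.8]{ho}), writes down the explicit metaplectic operator associated with each factor, and verifies by direct multiplication of these factors that their composition is precisely the operator \eqref{133}. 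Since the only freedom in choosing a metaplectic operator for a given linear symplectic map is a constant phase of modulus $1$, it suffices to check the normalization on one convenient test function (e.g.\ a Gaussian). This pins down the factor $\sqrt{B}/(2\pi)$ in front of the oscillatory integral \eqref{133}.

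For part (i), the identity $\mathcal{H}\circ \varkappa_B(\bx,\bxi)= B(\xi^2+x^2)$ is exactly the algebraic calculation already carried out in the discussion preceding the Corollary. Since $\mathcal{H}$ is the Weyl symbol of $H_0$ (standard) and $B(\xi^2+x^2)$ is the Weyl symbol of $B(h\otimes I_y)$, applying Proposition~\ref{p42} in the extended form of Remark~\ref{r3}(ii) yields \eqref{sof20}. The projection identity \eqref{sof21} then follows immediately from the functional calculus: $P_q = \one_{\{\lambda_q\}}(H_0)$, while $p_q\otimes I_y = \one_{\{2q+1\}}(h\otimes I_y)$, and \eqref{sof20} implies $\mathcal{U}_B^* f(H_0)\mathcal{U}_B = f(B(h\otimes I_y))$ for every bounded Borel function $f$; take $f = \one_{\{B(2q+1)\}}$.

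For part (ii), observe that the operator of multiplication by $V\in \Gamma(\rd)$ is the Weyl quantization of the symbol $s(\bx,\bxi):=V(x,y)$, which lies in $\Gamma(\re^4)$ as soon as $V\in \Gamma(\rd)$. Composing with $\varkappa_B$ and keeping only the first two components of \eqref{sof22} gives
\[
s\circ \varkappa_B (\bx,\bxi) = V\bigl(B^{-1/2}(x-\eta),\,B^{-1/2}(\xi-y)\bigr) = {\bf V}_B(x,y;\xi,\eta),
\]
which is the definition \eqref{8d01}. A second application of Proposition~\ref{p42} then delivers \eqref{sof32}.

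The only real obstacle is the identification step for $\mathcal{U}_B$. The algebraic symbol computations are routine, while ``is $\mathcal{U}_B$ the metaplectic operator for $\varkappa_B$?" requires either the decomposition-and-multiplication argument sketched above or, alternatively, a direct verification that conjugation by $\mathcal{U}_B$ sends the Heisenberg generators $(x,y,-i\partial_x,-i\partial_y)$ to the linear combinations prescribed by $\varkappa_B$ in \eqref{sof22}; either route is short but must be done carefully to fix the overall phase.
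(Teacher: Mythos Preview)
Your proposal is correct and follows essentially the same approach as the paper: the paper likewise verifies that $\mathcal{U}_B$ is the metaplectic operator for $\varkappa_B$ by factoring $\varkappa_B$ into elementary symplectic maps, computes $\mathcal{H}\circ\varkappa_B(\bx,\bxi)=B(\xi^2+x^2)$, and then invokes Proposition~\ref{p42} (with Remark~\ref{r3}(ii)) to obtain all three identities. Your derivation of \eqref{sof21} from \eqref{sof20} via the functional calculus is slightly more explicit than the paper's presentation but amounts to the same observation.
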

\begin{remark} \label{r2}
Various versions of the symplectic transformation $\varkappa_B$ in
\eqref{sof22} and the corresponding metaplectic operator
${\mathcal U}_B$ in \eqref{133} have been used in the spectral
theory of the perturbations of the Landau Hamiltonian (see e.g.
\cite{helsjo}). Of course, the close relation between the Landau
Hamiltonian $H_0$ and the harmonic oscillator $h$ is well-known
since the seminal work \cite{lan} where the basic
spectral properties of $H_0$ were first described.\\
\end{remark}

\subsection{Unitary equivalence of $P_q V P_q$ and ${\rm Op}_q^{aw}(V_B)$}
\label{ss26}
Set
    \bel{sof30}
    V_B(x,y)=V(-B^{-1/2}y,-B^{-1/2}x), \quad (x,y)\in\rd.
    \ee
    \begin{theorem}\label{th.b2}
For any  $V\in L^1(\rd) + L^\infty(\rd)$ and $q \in {\mathbb
Z}_+$, we have
    \bel{sof31}
\mathcalU_B^* P_q VP_q \mathcalU_B = p_q\otimes {\rm Op}_q^{aw} (V_B).
    \ee
\end{theorem}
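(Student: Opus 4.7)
The plan is to reduce \eqref{sof31} to a scalar identity via matrix elements and then verify that identity by a direct computation based on the explicit kernel \eqref{133} of $\mathcalU_B$. By Corollary~\ref{f41}(i), $\mathcalU_B^* P_q \mathcalU_B = p_q \otimes I_y$, so both sides of \eqref{sof31} annihilate the orthogonal complement of $p_q L^2(\re_x) \otimes L^2(\re_y)$ in $L^2(\rd)$. Hence it suffices to verify the identity on vectors of the form $\varphi_q \otimes f$ and $\varphi_q \otimes g$, and by Lemma~\ref{lgr1}(i) and density we may take $f, g \in {\mathcal S}(\re)$. The task then reduces to the scalar identity
\[
\langle V\,\mathcalU_B(\varphi_q \otimes f),\,\mathcalU_B(\varphi_q \otimes g)\rangle_{L^2(\rd)} = \langle {\rm Op}^{aw}_q(V_B)\,f,\,g\rangle_{L^2(\re)}.
\]

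The second step is to compute $\tilde{u}(x,y) := (\mathcalU_B(\varphi_q \otimes f))(x,y)$ explicitly. Substituting $u(x_1, y_1) = \varphi_q(x_1) f(y_1)$ into \eqref{133} and carrying out the $x_1$-integration first produces the Fourier transform of $\varphi_q$ at the frequency $B^{1/2}y + y_1$. The crucial input is the Hermite-function identity $\hat{\varphi}_q = (-i)^q\sqrt{2\pi}\,\varphi_q$, which converts this into
\[
\tilde{u}(x,y) = \frac{(-i)^q \sqrt{B}}{\sqrt{2\pi}}\, e^{iBxy/2}\int_{\re} e^{iB^{1/2} x y_1}\, f(y_1)\,\varphi_q(y_1 + B^{1/2} y)\,dy_1,
\]
and analogously for $\tilde{w} := \mathcalU_B(\varphi_q \otimes g)$. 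In the product $\tilde{u}(x,y)\overline{\tilde{w}(x,y)}$ the overall phase $e^{iBxy/2}$ and the unimodular factor $(-i)^q\overline{(-i)^q}$ both cancel.

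The third step is the change of variables $(x, y) \mapsto (B^{-1/2}\xi, -B^{-1/2}\chi)$ in the integral $\int V(x,y)\,\tilde{u}(x,y)\,\overline{\tilde{w}(x,y)}\,dx\,dy$. By definition \eqref{sof30} this turns $V(x, y)$ into $V_B(\chi, -\xi)$, while $\varphi_q(y_1 + B^{1/2}y)$ becomes $\varphi_q(y_1 - \chi)$. After a final sign change $\xi \mapsto -\zeta$ and rearrangement by Fubini, the result is
\[
\frac{1}{2\pi}\int V_B(\chi, \zeta)\,\langle f,\varphi_{q;\chi,\zeta}\rangle\,\overline{\langle g,\varphi_{q;\chi,\zeta}\rangle}\,d\chi\,d\zeta,
\]
which is exactly the anti-Wick sesquilinear form $F_{q, V_B}[f, g]$ of \eqref{8d03}, hence equals $\langle {\rm Op}^{aw}_q(V_B) f, g\rangle_{L^2(\re)}$. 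The only conceptual input is the Fourier-eigenvalue property of the Hermite functions; the main obstacle is the purely technical bookkeeping of phases, signs and rescalings, which has to be executed with sufficient care for all the cancellations to line up.
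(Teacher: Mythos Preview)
Your proof is correct and takes a genuinely different route from the paper's.

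The paper first restricts to $V\in C_0^\infty(\rd)$, invokes Corollary~\ref{f41}(ii) to write $\mathcalU_B^* P_q V P_q \mathcalU_B=(p_q\otimes I_y){\rm Op}^w({\bf V}_B)(p_q\otimes I_y)$, and then computes the quadratic form on Schwartz vectors through the Weyl calculus. The key identification goes via the Wigner function of $\varphi_q$ (Lemma~\ref{l41}, involving the Hermite--Laguerre identity \eqref{20z}) and Corollary~\ref{f42}. After that, two separate limiting arguments are needed: trace-norm approximation to pass from $C_0^\infty$ to $L^1$, and a weak-limit argument to reach $L^1+L^\infty$.

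Your argument bypasses the Weyl calculus entirely: you apply $\mathcalU_B$ directly to $\varphi_q\otimes f$ using the kernel \eqref{133}, and the only special-function input is that Hermite functions are Fourier eigenfunctions. After the change of variables in $(x,y)$ you recognise the coherent-state inner products $\langle f,\varphi_{q;\chi,\zeta}\rangle$ already sitting inside $\tilde u$, so no Fubini issue arises and the anti-Wick form \eqref{8d03} drops out immediately. Because $\tilde u,\tilde w\in L^2\cap L^\infty$ for $f,g\in{\mathcal S}(\re)$, the computation is valid for every $V\in L^1(\rd)+L^\infty(\rd)$ at once, so the density arguments in $V$ are not needed. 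One small notational point: in the paper's Fourier convention \eqref{end2} the eigenvalue identity reads $\hat\varphi_q=(-i)^q\varphi_q$; what you actually use (and what is correct) is $\int_\re e^{-ix\xi}\varphi_q(x)\,dx=(-i)^q\sqrt{2\pi}\,\varphi_q(\xi)$. The paper's longer route has the side benefit of exercising the Wigner-function machinery, but since Corollary~\ref{fgr1} follows from your Theorem together with the independently proven Corollary~\ref{f42}, nothing is lost by taking your shortcut.
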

    For the proof of Theorem \ref{th.b2} we need some well known estimates for Berezin-Toeplitz
    operators:
    \begin{lemma} \label{l21} {\rm \cite[Lemma 5.1]{r0}, \cite[Lemma
5.1]{fr}}  Let $V  \in L^\ell(\rd)$, $\ell \in [1,\infty)$. Then
$P_q V P_q \in S_\ell( L^2(\rd))$, and $\|P_q V P_q\|_{\ell}^\ell
\leq \frac{B}{2\pi} \|V\|^\ell_{L^{\ell}(\rd)}$, $q \in {\mathbb
Z}_+$. Moreover, if $V \in L^1(\rd)$, then
    \bel{gr18}
    \Tr P_q V P_q = \frac{B}{2\pi} \int_{\rd} V(\bx) d\bx, \quad q \in \ze_+.
    \ee
\end{lemma}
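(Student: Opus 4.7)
The plan is to reduce everything to the diagonal value of the integral kernel of $P_q$, together with a standard Schatten-class interpolation. Two facts will drive the argument: (a)~$P_q$ has a smooth reproducing kernel $P_q(\bx,\bx')$ whose restriction to the diagonal is the constant $P_q(\bx,\bx)=B/(2\pi)$; and (b)~the identity $P_q(\bx,\bx) = \int_\rd \abs{P_q(\bx,\bx')}^2 d\bx'$, which is immediate from $P_q^2 = P_q = P_q^*$. The diagonal value in (a) is the classical consequence of the closed-form Laguerre-polynomial expression for $P_q(\bx,\bx')$ (using $L_q(0)=1$); alternatively, magnetic translation invariance forces $P_q(\bx,\bx)$ to be constant, and the constant is fixed by the normalization of the density of states.

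I would first treat the case $\ell=1$. For $V\geq 0$ with $V\in L^1(\rd)$, the operator $V^{1/2}P_q$ has integral kernel $V^{1/2}(\bx)P_q(\bx,\bx')$, and combining (a) and (b) gives
\begin{equation*}
\norm{V^{1/2}P_q}_2^2 = \int_\rd V(\bx)\,P_q(\bx,\bx)\,d\bx = \frac{B}{2\pi}\,\norm{V}_{L^1(\rd)}.
\end{equation*}
Hence $P_qVP_q=(V^{1/2}P_q)^*(V^{1/2}P_q)$ is trace class with trace equal to $\norm{V^{1/2}P_q}_2^2$, establishing \eqref{gr18} for nonnegative $V$. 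For signed real $V\in L^1$, I would then split $V=V_+-V_-$ and use linearity of the trace together with the triangle inequality in the trace norm to deduce both \eqref{gr18} in general and the $\ell=1$ Schatten estimate $\norm{P_qVP_q}_1 \leq \tfrac{B}{2\pi}\norm{V}_{L^1}$.

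For general $\ell\in(1,\infty)$, I would view the linear map $T:V\mapsto P_qVP_q$ as bounded $L^1(\rd)\to S_1$ with operator norm $\leq B/(2\pi)$ (the step above) and as bounded from $L^\infty(\rd)$ into the space of bounded operators with norm $\leq 1$ (the trivial estimate $\norm{P_qVP_q}\leq\norm{V}_{L^\infty}$). Complex interpolation of Schatten classes then yields $T:L^\ell(\rd)\to S_\ell$ with norm $\leq (B/(2\pi))^{1/\ell}$, which upon raising to the $\ell$-th power is the claimed estimate. The only non-routine ingredient is the diagonal kernel identity $P_q(\bx,\bx)=B/(2\pi)$; once this is in hand, everything else is a standard Hilbert--Schmidt computation followed by Schatten-class interpolation.
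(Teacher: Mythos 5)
Your proof is correct. The paper does not prove this lemma but cites it from \cite{r0} and \cite{fr}, and your argument is essentially the standard one found there: the diagonal value $P_q(\bx,\bx)=B/(2\pi)$ of the reproducing kernel (constant by magnetic translation invariance, with $L_q(0)=1$ fixing the constant) gives the $\ell=1$ case and the trace identity \eqref{gr18} via the Hilbert--Schmidt computation for $V^{1/2}P_q$, and the Calder\'on--Lions interpolation between $L^1\to S_1$ and $L^\infty\to\mathcal B(L^2)$ — the same tool the paper itself invokes in the proof of Theorem~\ref{th13}(ii) — yields the general Schatten bound.
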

\begin{proof}[Proof of Theorem \ref{th.b2}]
Assume at first  $V \in C^{\infty}_0(\rd)$. Then, by
\eqref{sof21}, \eqref{sof32}, and \eqref{8d01},
    \bel{8d07}
    {\mathcal U}_B^* P_q V P_q {\mathcal
U}_B = (p_q \otimes I_y) {\rm Op}^{w}({\bf V}_B) (p_q \otimes
I_y). \ee   Let $u \in {\mathcal S}(\rd)$. Set
$$
u_q(y) : =
\int_{\rd} u(x,y)\varphi_q(x)dx.
$$
 Then we have
$$
\langle{\mathcal U}_B^* P_q V P_q {\mathcal U}_B u,
u\rangle_{L^2(\rd)} = \langle{\rm Op}^{w}({\bf V}_B)
(\varphi_q\otimes u_q), (\varphi_q\otimes u_q)\rangle_{L^2(\rd)} =
$$
$$
\frac{1}{(2\pi)^{2}}\int_{\re^6} V_B ((y_1+y_2)/2-\xi, \eta -
(x_1+x_2)/2 )\, e^{i[(x_1-x_2)\xi + (y_1-y_2)\eta]} \,\times
$$
$$
 \varphi_q(x_1) u_q(y_1) \;
\varphi_q(x_2) \overline{u_q(y_2)} \;dx_1 dx_2 \,dy_1 dy_2 \,d\xi
d\eta =
$$
$$
\frac{1}{(2\pi)^{2}}\int_{\re^5} V_B((y_1+y_2)/2-y', \eta -
\eta')\,e^{i(y_1-y_2)\eta}\, \times
$$
$$
\left(\int_{\re} \varphi_q(\eta'+v/2) \varphi_q(\eta' -
v/2)e^{ivy'}dv\right) u_q(y_1) \overline{u_q(y_2)} d\eta'd\eta
dy'dy_1dy_2 =
$$
$$
\frac{1}{2\pi}\int_{\re^5} \Psi_q(y',\eta') V_B ((y_1+y_2)/2-y',
\eta - \eta') e^{i(y_1-y_2)\eta}u_q(y_1)
\overline{u_q(y_2)}dy_1dy_2dy' d\eta d\eta'=
$$
\bel{9d01}
 \langle {\rm Op}^{w}(V_B * \Psi_q)u_q, u_q \rangle_{L^2(\re)} =
 \langle {\rm Op}_q^{aw}(V_B)u_q, u_q \rangle_{L^2(\re)} = \langle(p_q \otimes {\rm Op}_q^{aw}(V_B))u, u\rangle_{L^2(\rd)}.
\ee To obtain the first identity, we have utilized Corollary
\ref{f41}. To establish the second identity, we have used
\eqref{gr52}, \eqref{8d01}, and \eqref{sof30}. To get the third
identity, we have changed the variables $x_1 = \eta' + v/2$, $x_2
= \eta' - v/2$, $\xi = y'$. To obtain the fourth identity, we have
used \eqref{gr50} -- \eqref{gr51}
  with $\xi' = y'$, $x' = \eta'$, and $x =0$, $\xi = 0$, taking
  into account that $\Psi_q(\eta', -y') = \Psi_q(y',\eta')$. To deduce the fifth identity, we have  applied
  \eqref{gr52}, bearing in mind the symmetry of
the convolution $\Psi_q * V_B = V_B * \Psi_q$, and for  the sixth
identity, we have   applied \eqref{9d05} with $s = V_B$. Finally,
the last identity is obvious. Now, \eqref{9d01} entails
\eqref{sof31} in the case $V \in C_0^{\infty}(\rd)$. \\
Further, let  $V \in L^1(\rd)$, and pick a sequence
$\left\{V_m\right\}$ of functions $V_m \in C_0^{\infty}(\rd)$ such
that $V_m \rightarrow V$ in $L^1(\rd)$ as $m \to \infty$. Then by
Lemma \ref{l21} and the unitarity of ${\mathcal U}_B$, we have
$$
\lim_{m \to \infty}\| {\mathcal U}_B^* P_q V_m P_q
{\mathcal U}_B - {\mathcal U}_B^* P_q V P_q {\mathcal U}_B\|_1 = 0.
$$
Similarly, it follows from \eqref{8d04} with $\ell = 1$ and
\eqref{9d05} that
$$
\lim_{m \to \infty}\| p_q \otimes {\rm Op}_q^{aw}(V_{m,B}) -
p_q \otimes {\rm Op}_q^{aw}(V_B)\|_1 = 0.
$$
Hence, \eqref{sof31} is valid for $V \in L^1(\rd)$.\\
 Finally, let now
$V = V_1 + V_2$ with $V_1\in L^1(\rd)$ and $V_2\in
L^{\infty}(\rd)$. Denote by $\chi_R$ the characteristic function
of a disk of radius $R>0$ centered at the origin. Then $V_1 +
\chi_R V_2 \in L^1(\rd)$. Evidently,
$$
\wlim_{R \to \infty} {\mathcal U}_B^* P_q (V_1 + \chi_R
V_2) P_q {\mathcal U}_B = {\mathcal U}_B^* P_q V P_q {\mathcal
U}_B,
$$
while  \eqref{8d03} entails
$$
\wlim_{R \to \infty} p_q \otimes {\rm Op}_q^{aw}((V_1 +
\chi_R V_2)_B) = p_q \otimes {\rm Op}_q^{aw}(V_B),
$$
which yields \eqref{sof31} in the general case.
\end{proof}
Combining Theorem~\ref{th.b2} and Corollary~\ref{f42}, we obtain
the following

\begin{follow}\label{fgr1}
Let  $V\in L^1(\rd) + L^\infty(\rd)$ and $q \in {\mathbb Z}_+$. Then  we have
    \bel{gr7}
\mathcalU_B^* P_q VP_q \mathcalU_B = p_q\otimes {\rm Op}^{w}
(V_B * \Psi_q).
    \ee
\end{follow}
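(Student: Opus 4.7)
The plan is to simply chain together the two results on the preceding pages. Theorem~\ref{th.b2}, applied to any $V\in L^1(\rd)+L^\infty(\rd)$, already gives the metaplectic conjugation
$$
\mathcalU_B^* P_q V P_q \mathcalU_B = p_q\otimes {\rm Op}_q^{aw}(V_B),
$$
so the only remaining task is to rewrite ${\rm Op}_q^{aw}(V_B)$ in Weyl form. This is the content of Corollary~\ref{f42}, which identifies generalized anti-Wick quantization with Weyl quantization of the convolution with the Wigner function $\Psi_q$ associated with $\varphi_q$.

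Concretely, I would proceed as follows. First, I would verify that $V_B$ lies in the class of symbols admissible for Corollary~\ref{f42}, namely $L^1(\rd)+L^\infty(\rd)$. Since the definition \eqref{sof30} is just a linear change of variables on $\rd$, the decomposition $V=V_1+V_2$ with $V_1\in L^1$, $V_2\in L^\infty$ transports (up to a Jacobian factor absorbed into the $L^1$ norm) to a decomposition of $V_B$ of the same type. Then I would apply Corollary~\ref{f42} with $s=V_B$ to obtain
$$
{\rm Op}_q^{aw}(V_B) = {\rm Op}^w(\Psi_q * V_B).
$$
Finally, using commutativity of convolution, $\Psi_q * V_B = V_B * \Psi_q$, and substituting this into the identity from Theorem~\ref{th.b2} yields the claimed equation \eqref{gr7}.

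There is no genuine obstacle here; the corollary is a purely formal consequence of the two preceding results. The only thing worth double-checking is the domain condition for Corollary~\ref{f42}, since both sides of \eqref{9d05} are initially defined through sesquilinear forms rather than pointwise, and one must confirm that the identity passes to the sum class $L^1+L^\infty$ (which was already established in the final weak-limit step of the proof of Theorem~\ref{th.b2}). Once that observation is in place, the proof is a one-line substitution.
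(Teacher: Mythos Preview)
Your proposal is correct and matches the paper's approach exactly: the paper simply states that Corollary~\ref{fgr1} follows by ``combining Theorem~\ref{th.b2} and Corollary~\ref{f42}''. Your added verification that $V_B\in L^1(\rd)+L^\infty(\rd)$ (via the linear change of variables in \eqref{sof30}) is the only detail the paper leaves implicit, and it is handled correctly.
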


\begin{remark} \label{r6}
 To the authors' best knowledge, the unitary equivalence
between the Toeplitz operators $P_q V P_q$, $q \in {\mathbb Z}_+$,
and $\Psi$DO with generalized  anti-Wick symbols in the context of
the spectral theory of perturbations of the Landau Hamiltonian,
was first shown in \cite{r0}. Related heuristic arguments  can be
found in \cite{rs, bhele}. In the case $q=0$ this equivalence is
closely related to the Segal-Bargmann transform in appropriate
holomorphic spaces which, in one form or another, plays an
important role in the semiclassical analysis performed in
\cite{tw, vb1, tvb, urvb}. Let us comment in more detail on this
relation. The Hilbert space $P_0 L^2(\rd)$ coincides with the
classical  Bargmann space
$$
\left\{f \in L^2(\rd) \, |\, f(\bx) = e^{-B|\bx|^2/4}g(\bx), \quad
\frac{\partial g}{\partial x} + i \frac{\partial g}{\partial y} =
0, \quad \bx = (x,y) \in \rd\right\}.
$$
Then the Segal-Bargmann transform $T_0: L^2(\re) \to P_0 L^2(\rd)$ is a unitary operator with integral kernel
$$
{\mathcal T}_0 : = \frac{1}{\sqrt{2}} \left(\frac{B}{\pi}\right)^{3/4} e^{-B((x+iy+2t)^2 - 2t^2 + |\bx|^2)/4},
\quad \bx = (x,y) \in \rd, \quad t \in \re,
$$
(see \cite[Lemma 3.1]{tp}). Fix $q \in {\mathbb Z}_+$. Denote by $M_q : L^2(\re) \to (p_q \otimes I_y) L^2(\rd)$
the unitary operator which maps $u \in L^2(\re)$ into $B^{1/4} \varphi_q(x) u(B^{1/2}y)$, $(x,y) \in \rd$,
and by ${\mathcal R} : L^2(\rd) \to L^2(\rd)$ the unitary operator generated by the rotation by angle $\pi/2$, i.e.
$({\mathcal R} u)(x,y) = u(y,-x)$, $(x,y) \in \rd$; note that $[{\mathcal R}, P_q] = 0$. Then we have
 $$
 T_0 = {\mathcal R} {\mathcal U}_B M_0.
 $$
{}From this point of view the operators $T_q : = {\mathcal R}
{\mathcal U}_B M_q$, $q \in {\mathbb N}$, could be called
generalized Segal-Bargmann transforms.
\end{remark}

\section{Analysis of $\Op (V_B * \Psi_q)$ and proof of Theorem 1.6}\label{sec.c}

\subsection{Reduction of $\Op(V_B * \Psi_q)$ to $\Op(V_B * \delta_{\sqrt{2q+1}})$}
In the sequel we will use the following notations.
For $k>0$, let $\delta_k$ be the $\delta$-function in $\rd$
supported on the circle of radius $k$ centered at the origin. More
precisely, the distribution $\delta_k \in {\mathcal S}'(\re^2)$ is defined by
$$
\delta_k(\varphi) :  =
\frac1{2\pi}\int_0^{2\pi}\varphi(k\cos\theta,k\sin\theta)d\theta,
\quad \varphi\in {\mathcal S}(\rd).
$$
Next, we denote by $\hat{f}$ the  Fourier transform of the
distribution $f \in {\mathcal S}'(\re^d)$,  unitary in $L^2(\re^d)$, i.e.
    \bel{end2}
\hat{f}(\xi) : = (2\pi)^{-d/2} \int_{\re^d} e^{-ix\cdot\xi} f(x)
dx, \quad \xi \in \re^d,
    \ee
for $f \in {\mathcal S}(\re^d)$.

\begin{lemma}\label{th.b3}
Let $V\in C_0^\infty(\R^2)$ and $B_0>0$.
Then for some constant $C=C(B_0)$ one has
\begin{multline}
\sup_{q\geq0}\sup_{B\geq B_0}\lambda_q^{3/4} B^{-1}
\norm{\Op(V_B * \Psi_q)-\Op(V_B*\delta_{\sqrt{2q+1}})}
\\
\leq
C\int_{\re^2} (\abs{\zeta}^{5/2}+\abs{\zeta}^6)\abs{\widehat V(\zeta)}d\zeta,
\label{c13}
\end{multline}
\begin{multline}
\sup_{q\geq0}\sup_{B\geq B_0}\lambda_q^{3/4} B^{-1}
\norm{\Op(V_B * \Psi_q)-\Op(V_B*\delta_{\sqrt{2q+1}})}_2
\\
\leq
C\left(\int_{\re^2} (\abs{\zeta}^{5}+\abs{\zeta}^{12})\abs{\widehat V(\zeta)}^2 d\zeta\right)^{1/2}.
\label{c14}
\end{multline}
\end{lemma}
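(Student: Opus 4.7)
The plan is to pass everything to the Fourier side and reduce the whole question to a pointwise comparison between the Laguerre function $L_q(r^2/2)e^{-r^2/4}$ and the Bessel function $J_0(r\sqrt{2q+1})$. Set $K_q:=\Psi_q-\delta_{\sqrt{2q+1}}$. With our normalization, $\widehat{V_B*K_q}=2\pi\widehat{V_B}\cdot\widehat{K_q}$, so Proposition~\ref{prvbp1a} and formula \eqref{prvb2} (combined with Plancherel) yield
\[
\norm{\Op(V_B*K_q)}\leq \int_{\re^2}|\widehat{V_B}(\zeta)|\,|\widehat{K_q}(\zeta)|\,d\zeta, \qquad
\norm{\Op(V_B*K_q)}_2^2=2\pi\int_{\re^2}|\widehat{V_B}(\zeta)|^2|\widehat{K_q}(\zeta)|^2\,d\zeta.
\]
By Lemma~\ref{l41} (applied at the origin) one identifies $\widehat{\Psi_q}(\zeta)=(2\pi)^{-1}L_q(|\zeta|^2/2)e^{-|\zeta|^2/4}$, while a direct polar-coordinates calculation gives $\widehat{\delta_k}(\zeta)=(2\pi)^{-1}J_0(k|\zeta|)$; in particular $\widehat{K_q}$ is radial. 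The relation $\widehat{V_B}(\zeta)=B\,\widehat V(-B^{1/2}\zeta_2,-B^{1/2}\zeta_1)$, combined with the change of variables $\xi=B^{1/2}(-\zeta_2,-\zeta_1)$, reduces the two displays above to
\[
\int_{\re^2}|\widehat V(\xi)|\,|\widehat{K_q}(B^{-1/2}|\xi|)|\,d\xi \quad\text{and}\quad 2\pi B\int_{\re^2}|\widehat V(\xi)|^2\,|\widehat{K_q}(B^{-1/2}|\xi|)|^2\,d\xi.
\]

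\textbf{The heart of the proof} is the pointwise Laguerre--Bessel estimate
\[
\bigl|L_q(r^2/2)e^{-r^2/4}-J_0(r\sqrt{2q+1})\bigr| \leq C\,(2q+1)^{-3/4}\bigl(r^{5/2}+r^6\bigr),\qquad r\geq 0,\ q\in\ze_+.
\]
For $r$ in bounded intervals this is Szeg\H{o}'s Hilb--Perron asymptotic $L_q(t)e^{-t/2}=J_0(\sqrt{(4q+2)t})+O((2q+1)^{-3/4})$. To promote it to a global bound I would proceed in two pieces. First, a Taylor expansion at $r=0$ shows that both sides agree to order $r^2$ (both begin as $1-(2q+1)r^2/4$), so their difference is $O(r^4)$; interpolating this observation with the $O((2q+1)^{-3/4})$ Hilb--Perron error yields the $r^{5/2}$ factor in the small-$r$ regime. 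Second, in the large-$r$ regime I would use Olver's uniform asymptotic expansion of the Laguerre function through the turning point $r\sim\sqrt{2q+1}$ together with the trivial bounds $|L_q(r^2/2)e^{-r^2/4}|,|J_0(r\sqrt{2q+1})|\leq C$, matched at the crossover $r\sim(2q+1)^{1/8}$, so that the residual is absorbed into the $r^6$ factor.

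\textbf{Assembly and main obstacle.} Plugging the pointwise estimate into the two reduced integrals and using $\lambda_q=B(2q+1)$ with $B\geq B_0$, the $B$-powers arising from the scaling combine with $(2q+1)^{-3/4}$ to produce exactly the target prefactor $\lambda_q^{-3/4}B$, while the monomials $(B^{-1/2}|\xi|)^{5/2}+(B^{-1/2}|\xi|)^{6}$ are absorbed into $B_0$-dependent constants so as to leave the desired weights $|\xi|^{5/2}+|\xi|^6$ in \eqref{c13} (and their squares in \eqref{c14}). The principal technical obstacle is the uniform-in-$r$ Laguerre--Bessel comparison itself: the classical Mehler--Heine / Hilb--Perron asymptotic only controls the error on bounded $r$-intervals, so delicate patching between the oscillatory regime $r\lesssim\sqrt{2q+1}$, the turning-point regime $r\sim\sqrt{2q+1}$, and the evanescent regime $r\gtrsim\sqrt{2q+1}$ is required to produce a single global bound with the prescribed polynomial growth.
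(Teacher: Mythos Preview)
Your Fourier-side reduction is exactly what the paper does: both of you express $\widehat{s_q}-\widehat{t_{\sqrt{2q+1}}}$ as $(2\pi)^{-1}\widehat{V_B}(\zeta)\bigl(L_q(|\zeta|^2/2)e^{-|\zeta|^2/4}-J_0(\sqrt{2q+1}\,|\zeta|)\bigr)$, invoke \eqref{end3} for the operator norm and \eqref{prvb2} for the Hilbert--Schmidt norm, and then scale out the $B$-dependence. (One small correction: the identity $\widehat{\Psi_q}(\zeta)=(2\pi)^{-1}L_q(|\zeta|^2/2)e^{-|\zeta|^2/4}$ is not a consequence of Lemma~\ref{l41}; in the paper it is the separate formula \eqref{prvb30}, proved in the Appendix.)

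The genuine divergence is in how the pointwise Laguerre--Bessel bound is obtained. The paper does \emph{not} patch asymptotic regimes. Instead it observes that $u_q(x):=e^{-x/2}L_q(x)$ and $v_q(x):=J_0(\sqrt{(4q+2)x})$ satisfy the same singular ODE up to the forcing term $\tfrac{x}{4}u_q$, which yields the Volterra equation $u_q=v_q+K_qu_q$ with kernel $F_q(x,y)=-\tfrac{\pi}{4}y\bigl(J_0(\sqrt{(4q+2)x})Y_0(\sqrt{(4q+2)y})-Y_0(\sqrt{(4q+2)x})J_0(\sqrt{(4q+2)y})\bigr)$. One iteration gives $u_q-v_q=K_qv_q+K_q^2u_q$, and the crude bounds $|J_0|,|Y_0|\leq C x^{-1/2}$, $|u_q|\leq 1$ immediately produce the global estimate $|u_q(x)-v_q(x)|\leq C(q^{-3/4}x^{5/4}+q^{-1}x^3)$ for all $x>0$. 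This is precisely \eqref{b7}, and the exponents $5/2$ and $6$ in the statement drop out after the substitution $x=|\zeta|^2/2$.

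Your proposed route---Hilb--Perron on bounded intervals, Taylor matching near the origin, and Olver-type turning-point analysis for large $r$---could in principle be made to work, but the ``interpolation'' you sketch (from $O(r^4)$ and $O((2q+1)^{-3/4})$ to $O((2q+1)^{-3/4}r^{5/2})$) is not a valid step as written: knowing two separate upper bounds does not produce their geometric mean without extra structure. More importantly, the integral-equation argument completely sidesteps what you correctly flag as the principal obstacle; there is no regime-matching at all, and the proof is a few lines. So your reduction is right and your target inequality is right, but the paper's proof of that inequality is both simpler and different in method from what you propose.
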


The intuition behind this lemma is the convergence of $ \Psi_q$ to
$\delta_{\sqrt{2q+1}}$ in an appropriate sense as $q\to\infty$.
We also note the
similarity between the definition of $V_B * \delta_k$ and the
``classical'' formula \eqref{a1}. For brevity, we introduce the
short-hand notations
 \bel{end1}
s_q : = V_B * \Psi_q, \quad q \in {\mathbb Z}_+, \quad t_k : =
V_B*\delta_k, \quad k \in (0,\infty).
    \ee
\begin{proof}
First we represent the symbols $s_q$, $t_k$ in a form
convenient for our purposes. For $s_q$ we have
\bel{gr17}
s_q(z)= \int_{\R^2} e^{iz\zeta} \widehat
\Psi_q(\zeta)\widehat V_B(\zeta)d\zeta, \quad z\in\R^2.
    \ee
In the Appendix we will prove the formula
\begin{equation}
\widehat{\Psi_q}(\zeta) = (-1)^q \Psi_q(2^{-1} \zeta)/2, \quad q
\in {\mathbb Z}_+, \quad \zeta \in \R^2. \label{prvb30}
\end{equation}
By \eqref{gr17}, \eqref{prvb30}, and the definition \eqref{17a}
of $\Psi_q$,
$$
s_q(z)=\frac1{2\pi}\int_{\R^2} e^{iz\zeta}
\Lag_q(\abs{\zeta}^2/2)e^{-\abs{\zeta}^2/4}
\widehat V_B(\zeta) d\zeta.
$$
For $t_k$ we can write
\begin{equation}
t_k(z)=\frac{1}{2\pi}\int_{\rd}e^{iz\zeta}J_0(k\abs{\zeta})\widehat
V_B(\zeta)d\zeta, \quad z\in\rd,
\label{b14}
\end{equation}
since the integral representation
for the Bessel function $J_0$ can be written as
\begin{equation}
J_0(k |\zeta|) = 2\pi \hat{\delta}_k(\zeta), \quad \zeta\in\R^2.
\label{b13}
\end{equation}
Thus \eqref{c13}  (resp. \eqref{c14}) reduces to estimating the operator norm
(resp. the Hilbert-Schmidt norm) of the operator with the Weyl symbol
\begin{equation}
s_q(z)-t_{\sqrt{2q+1}}(z)
=
\frac{1}{2\pi}\int_{\R^2} e^{iz\zeta}
\bigl(\Lag_q(\abs{\zeta}^2/2)e^{-\abs{\zeta}^2/4}
-
J_0(\sqrt{2q+1}\abs{\zeta})\bigr)
\widehat V_B(\zeta)d\zeta, \quad q \in {\mathbb Z}_+.
\label{b5}
\end{equation}

In what follows the estimate
\begin{equation}
\Abs{{\rm L}_q(x) e^{-x/2} - J_0(\sqrt{(4q+2)x})} \leq
C(q^{-3/4}x^{5/4}+q^{-1}x^3), \qquad q\in\mathbb N, \quad x>0,
\label{b7}
\end{equation}
plays a key role.
This estimate is probably well known to experts,
but since we could not find it explicitly in the literature, we include its
proof  in the Appendix.

Let us prove the estimate \eqref{c13}.
Using the estimates \eqref{end3} and \eqref{b7}, we obtain:
\begin{multline}
\norm{\Op(V_B * \Psi_q)-\Op(V_B*\delta_{\sqrt{2q+1}})}
\leq
(2\pi)^{-1}
\norm{\hat{s_q}-\hat t_{\sqrt{2q+1}}}_{L^1(\re^2)}
\\
=
(2\pi)^{-1}
\int_{\R^2}\abs{\Lag_q(\abs{\zeta}^2/2)e^{-\abs{\zeta}^2/4}-J_0(\sqrt{2q+1}\abs{\zeta})}
\abs{\widehat V_B(\zeta)}d\zeta
\\
\leq
C
\int_{\R^2}(q^{-3/4}\abs{\zeta}^{5/2}+q^{-1}\abs{\zeta}^6)
\abs{\widehat V_B(\zeta)}d\zeta.
\label{b10}
\end{multline}
Recalling the definition \eqref{sof30} of $V_B$, we obtain
$\widehat V_B(\zeta)=B\widehat V_1(B^{1/2}\zeta)$, and so the l.h.s. of
\eqref{b10} can be estimated by
\begin{multline*}
CB q^{-3/4} \int_{\R^2} \abs{\zeta}^{5/2} \abs{\widehat V_1(B^{1/2}\zeta)}d\zeta
+
CB q^{-1} \int_{\R^2} \abs{\zeta}^{6} \abs{\widehat V_1(B^{1/2}\zeta)}d\zeta
\\
=
CB^{-5/4} q^{-3/4} \int_{\R^2} \abs{\zeta}^{5/2} \abs{\widehat V_1(\zeta)}d\zeta
+
CB^{-3} q^{-1} \int_{\R^2} \abs{\zeta}^{6} \abs{\widehat V_1(\zeta)}d\zeta.
\end{multline*}
This yields \eqref{c13}.

Next, let us prove the estimate \eqref{c14}.
By \eqref{prvb2} and the unitarity of the Fourier transform,
\begin{multline*}
\norm{\Op(s_q-t_k)}^2_2
=
(2\pi)^{-1}\int_{\R^2}\abs{\hat{s}_q(\zeta)-\hat{t}_{\sqrt{2q+1}}(\zeta)}^2 d\zeta
\\
=
(2\pi)^{-1}
\int_{\R^2}
\abs{\Lag_q(\abs{\zeta}^2/2)e^{-\abs{\zeta}^2/4}-J_0(\sqrt{2q+1}\abs{\zeta})}^2
\abs{\widehat V_B(\zeta)}^2 d\zeta.
\end{multline*}
Now using the estimate \eqref{b7} again, we obtain \eqref{c14} in a similar way
to the previous step of the proof.
\end{proof}

\subsection{Norm estimate of $\Op(V_B * \delta_k)$}
\begin{lemma}\label{lma.b3}
Let $V({\bf x})=\jap{{\bf x}}^{-\rho}$, $\rho>1$, and $B_0>0$.
Then
$$
\sup_{k>0} \sup_{B>B_0}
k B^{-1/2}
\norm{\Op(V_B * \delta_k)}<\infty.
$$
\end{lemma}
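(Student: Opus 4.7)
The plan is to bound $\norm{\Op(V_B*\delta_k)}$ via Proposition~\ref{prvbp1}, the Calder\'on--Vaillancourt-type estimate through the $\Gamma$-norm. A first attempt via Proposition~\ref{prvbp1a} --- estimating $\norm{\Op(s)}$ by $(2\pi)^{-1}\norm{\hat s}_{L^1}$ --- is natural: $\widehat{V_B*\delta_k}(\zeta)=\widehat{V_B}(\zeta)J_0(k\abs{\zeta})$ by \eqref{b13}, and the scaling $\widehat{V_B}(\zeta)=B\widehat V(\sqrt{B}\zeta)$ reduces matters to bounding $\int\abs{\widehat V(\eta)}\abs{J_0(\tau\abs{\eta})}d\eta$, where $\tau:=k/\sqrt{B}$. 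However, because $\abs{J_0(x)}\le C x^{-1/2}$ at infinity, splitting at $\abs{\eta}=1/\tau$ produces at best an $O(\tau^{-1/2})$ bound, falling short of the required $O(\tau^{-1})$. So Proposition~\ref{prvbp1} with $d=1$ is the right tool.

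To use it, I need uniform bounds on $\partial_x^\alpha\partial_\xi^\beta(V_B*\delta_k)$ for $\alpha,\beta\in\{0,1\}$. Since $V(\bx)=\jap{\bx}^{-\rho}$ satisfies $\abs{\partial^\gamma V(\bx)}\leq C_\gamma\jap{\bx}^{-\rho}$ for every multi-index $\gamma$, scaling gives $\abs{\partial^\gamma V_B(\bx)}\leq C_\gamma B^{-\abs{\gamma}/2}V_B(\bx)$ pointwise. As $\delta_k$ is a probability measure, this pointwise domination survives convolution: $\abs{\partial^\gamma(V_B*\delta_k)}\leq C_\gamma B^{-\abs{\gamma}/2}(V_B*\delta_k)$. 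For $B\ge B_0$ and $\abs{\gamma}\le 2$, the factor $B^{-\abs{\gamma}/2}$ is at most $\max(1,B_0^{-1})$, so the whole problem reduces to the uniform $L^\infty$ estimate
\begin{equation*}
\norm{V_B*\delta_k}_{L^\infty(\rd)}\le C\min(1,\sqrt{B}/k).
\end{equation*}

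This $L^\infty$ bound is the heart of the proof. The symbol $V_B*\delta_k$ is radial; writing $\bz=(r,0)$ without loss of generality, it equals $\frac{1}{2\pi}\int_0^{2\pi}(1+(r^2+k^2-2rk\cos\theta)/B)^{-\rho/2}d\theta$. The trivial bound $\leq 1$ disposes of the case $\tau\leq 1$. For $\tau\ge 1$ I split on $r$. If $r\le k/2$, the integrand is pointwise at most $(1+k^2/(4B))^{-\rho/2}\le C\tau^{-\rho}\le C\tau^{-1}$, since $\rho>1$. If $r\ge k/2$, I use the identity $r^2+k^2-2rk\cos\theta=(r-k)^2+4rk\sin^2(\theta/2)\ge 4rk\sin^2(\theta/2)$, the elementary inequality $\sin(\theta/2)\ge\theta/\pi$ on $[0,\pi]$, and the substitution $u=(4\phi/\pi)\sqrt{rk/B}$ with $\phi=\theta/2$ to obtain
\begin{equation*}
(V_B*\delta_k)(\bz)\le\frac{1}{2\sqrt{rk/B}}\int_0^{\infty}(1+u^2)^{-\rho/2}du\le C_\rho\sqrt{B/(rk)}\le C'_\rho\sqrt{B}/k.
\end{equation*}
The two cases together give the required $L^\infty$ bound; substituting it and its derivative analogues into Proposition~\ref{prvbp1} yields $\norm{\Op(V_B*\delta_k)}\le C(B_0)\min(1,\sqrt{B}/k)$, and multiplying by $kB^{-1/2}$ finishes the proof. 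The main obstacle is this two-regime geometric analysis of the spherical mean; once the peak bound $O(\sqrt{B}/k)$ near $r\approx k$ is secured, the rest of the argument is routine.
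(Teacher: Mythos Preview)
Your proof is correct and follows the same overall strategy as the paper: apply Proposition~\ref{prvbp1}, reduce the $\Gamma$-norm bound on derivatives to the single $L^\infty$ bound on $V_B*\delta_k$ via $\abs{\partial^\gamma V_B}\le C_\gamma B^{-\abs{\gamma}/2}V_B$, and then estimate the spherical mean directly.

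The only substantive difference is in how the key $L^\infty$ estimate is obtained. You split into the regimes $r\le k/2$ and $r\ge k/2$ and, in the second regime, use the identity $r^2+k^2-2rk\cos\theta=(r-k)^2+4rk\sin^2(\theta/2)$ together with $\sin(\theta/2)\ge\theta/\pi$. The paper instead observes in one line that
\[
(k\cos\theta-r)^2+k^2\sin^2\theta\ \ge\ k^2\sin^2\theta,
\]
which is valid for \emph{all} $r\ge0$ simultaneously, and then bounds $\int_0^{\pi/2}(1+B^{-1}k^2\sin^2\theta)^{-\rho/2}d\theta$ by the same $\sin\theta\ge 2\theta/\pi$ argument. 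This avoids the case analysis entirely and gives the bound $CB^{1/2}/k$ directly. Your two-regime argument is slightly longer but has the advantage of making the geometric picture (the convolution peaks near $r\approx k$ with width governed by $\sqrt{B/k}$) explicit, whereas the paper's inequality is a clean algebraic trick that somewhat hides this intuition.
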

\begin{proof}
By Proposition~\ref{prvbp1}, it suffices to prove that for any
 differential operator $L$ with constant coefficients
we have
\bel{prvb14}
\sup_{k>0} \sup_{B>B_0}
k B^{-1/2}\sup_{z\in \rd} \abs{(LV_B* \delta_k)(z)} < \infty.
\ee
Note that, by the standard symbol
properties of $\langle \bx\rangle^{-\rho}$, we have
$$
|L V_B(\bx)|\leq CV_B(\bx), \quad \bx\in\rd,
$$
where $C$ depends only on $B_0$ and $L$.
Thus, it remains to prove that
\begin{equation}
\sup_{k>0} \sup_{B>B_0}
kB^{-1/2}
\sup_{z\in\rd}\abs{(V_B*\delta_k)(z)}<\infty.
\label{c15}
\end{equation}
We have
$$
V_B(z)=(B^{-1}\abs{z}^2+1)^{-\rho/2}.
$$
Take $z=(r,0)$, $r\geq0$. Then
\begin{multline*}
\left(V_B * \delta_k \right)(z) =
\frac1{2\pi}
\int_0^{2\pi}(B^{-1}(k\cos\theta-r)^2+B^{-1}(k\sin\theta)^2+1)^{-\rho/2}d\theta
\\
\leq \frac1{2\pi}
\int_0^{2\pi}(B^{-1}k^2(\sin\theta)^2+1)^{-\rho/2}d\theta
=
\frac2{\pi}
\int_0^{\pi/2}(B^{-1}k^2(\sin\theta)^2+1)^{-\rho/2}d\theta
\\
\leq \frac2{\pi}
\int_0^{\pi/2}(B^{-1}k^2(2\theta/\pi)^2+1)^{-\rho/2}d\theta
\leq
\frac2{\pi} \int_0^{\infty}(B^{-1}k^2(2\theta/\pi)^2+1)^{-\rho/2}d\theta
\\
= \frac{2B^{1/2}}{\pi k}
\int_0^{\infty}((2\theta/\pi)^2+1)^{-\rho/2}d\theta = CB^{1/2}/k.
\end{multline*}
This yields
$$
\sup_{z\in\rd}\abs{(V_B*\delta_k)(z)}\leq CB^{1/2}k^{-1},
$$
and \eqref{c15} follows.
\end{proof}

\subsection{Asymptotics of traces}
\begin{theorem}\label{th.b4}
Let $V\in C_0^\infty(\R^2)$. Then for each $\ell \in {\mathbb N}$, $\ell \geq 2$,
we have
    \bel{sof12}
\lim_{q \to \infty}\lambda_q^{(\ell-1)/2}
\Tr \bigl(\Op(t_{\sqrt{2q+1}})\bigr)^\ell
=
\frac{B^\ell}{2\pi}
\int_{{\mathbb T}} \, \int_{\R} \;\widetilde{V}(\omega,b)^\ell \,
db\, d\omega.
    \ee
\end{theorem}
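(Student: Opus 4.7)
The strategy is semiclassical in $k:=\sqrt{2q+1}$: the Weyl symbol $t_k=V_B * \delta_k$ is of size $O(1/k)$ on an annulus of radius $k$ and width $O(1)$, and approximates, in leading order, a rescaled Radon transform of $V_B$ along this annulus. Concretely, I first establish the pointwise asymptotics
\begin{equation*}
t_k((k+b)\omega)=\frac{1}{k}\widetilde{V_B}(\omega,b)+O(k^{-2}),\qquad k\to\infty,
\end{equation*}
uniformly for $\omega\in{\mathbb T}$ and $b$ bounded. This follows from the defining integral $t_k(z)=\frac{1}{2\pi}\int_{-\pi}^{\pi}V_B(z-k\omega_\theta)\,d\theta$: since $V_B$ has compact support, only $\theta$ near the ray through $z$ contributes (and the opposite ray near $\theta+\pi$ gives zero for $k$ sufficiently large); changing variables $s=k\phi$ with $\phi=\theta-\arg\omega$ and Taylor expanding reproduces the Radon transform defining $\widetilde{V_B}$.

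Next, I express $\Tr\Op(t_k)^\ell$ as an oscillatory integral using the Schwartz kernel of $\Op(t_k)^\ell$:
\begin{equation*}
\Tr\Op(t_k)^\ell=\frac{C_\ell}{(2\pi)^\ell}\int_{\R^{2\ell}}\prod_{j=1}^\ell t_k(z_j)\cdot e^{iS_\ell(z_1,\ldots,z_\ell)}\,dz_1\cdots dz_\ell,
\end{equation*}
where $C_\ell$ is a combinatorial constant and $S_\ell$ is a quadratic symplectic form in the $z_j$'s (for $\ell=2$ the phase is absent and $S_2\equiv 0$, giving $\Tr\Op(t_k)^2=(2\pi)^{-1}\int t_k^2$; for $\ell=3$, $S_3$ equals four times the signed symplectic area of the triangle with vertices $z_1,z_2,z_3$, as one checks by direct calculation from \eqref{gr52}). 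I then introduce the adapted coordinates $z_j=(k+b_j)\omega_{\theta+\phi_j/k}$, absorbing $\phi_1\equiv 0$ into the common rotation $\theta$. The Jacobian is $\prod dz_j\approx k\,d\theta\prod db_j\prod_{j\geq 2}d\phi_j$, while the asymptotics of Step~1 give $\prod t_k(z_j)\approx k^{-\ell}\prod\widetilde{V_B}(\omega_\theta,b_j)$, so the integrand contributes $k^{1-\ell}$ in total. The phase $S_\ell$ converges, as $k\to\infty$, to a $k$-independent form that is linear in each $\phi_j$ with coefficient built from the $b_i$'s; integrating over $\phi_2,\ldots,\phi_\ell$ produces delta functions forcing all $b_j$ to coincide. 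Collecting constants, this yields
\begin{equation*}
\lim_{k\to\infty}k^{\ell-1}\Tr\Op(t_k)^\ell=\frac{1}{2\pi}\int_{\mathbb T}d\omega\int_\R \widetilde{V_B}(\omega,b)^\ell\,db.
\end{equation*}

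Finally, I convert from $\widetilde{V_B}$ to $\widetilde V$. Since $V_B(x,y)=V(B^{-1/2}(-y,-x))$, a direct change of variables in the Radon integral gives $\widetilde{V_B}(\omega,b)=\sqrt{B}\,\widetilde V(R\omega,b/\sqrt{B})$ with $R:(x,y)\mapsto(-y,-x)$ an orthogonal reflection. Raising to the $\ell$th power, using $db=\sqrt{B}\,dc$ and the fact that $R$ is an isometry of ${\mathbb T}$, yields $\int_{\mathbb T}\int_\R\widetilde{V_B}^\ell\,db\,d\omega=B^{(\ell+1)/2}\int_{\mathbb T}\int_\R\widetilde V^\ell\,db\,d\omega$. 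Combined with $\lambda_q^{(\ell-1)/2}=B^{(\ell-1)/2}k^{\ell-1}$, this gives the asserted limit.

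\textbf{Main obstacle.} The hard step is justifying that the leading contribution to the trace really comes from the ``thin'' regime $|\theta_i-\theta_j|\lesssim k^{-1}$: one must show that configurations where the $\ell$-gon $z_1\cdots z_\ell$ is non-degenerate (so that $S_\ell$ is of order $\gg 1$) contribute only $o(k^{1-\ell})$, via non-stationary phase / integration by parts in the oscillatory kernel $e^{iS_\ell}$. A complication appears already for $\ell=4$, where the map $(x_1,\ldots,x_\ell)\mapsto(y_j=(x_j+x_{j+1})/2)$ acquires a linear degeneracy $y_1-y_2+y_3-y_4=0$, so that the kernel has a singular component; this will require more careful bookkeeping in the trace formula for even $\ell$ but does not affect the overall scaling, since the singular factor can be absorbed into one fewer phase integration at the final step.
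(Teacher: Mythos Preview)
Your approach is genuinely different from the paper's, and in outline it is sound, but it is technically harder to make rigorous for exactly the reasons you yourself flag.

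The paper works entirely on the Fourier side. Using $\hat t_k(\zeta)=(2\pi)^{-1}\widehat{V_B}(\zeta)\int_{\mathbb T}e^{-ik\omega\cdot\zeta}d\omega$ and the Moyal composition formula in Fourier variables, it writes
\[
\Tr\bigl(\Op(t_k)\bigr)^\ell=\int_{\R^{2(\ell-1)}}\int_{\mathbb T^\ell} f(\bzeta)\,e^{ik\varphi(\bomega,\bzeta)}\,d\bomega\,d\bzeta,
\qquad
\varphi(\bomega,\bzeta)=\sum_{j=1}^{\ell-1}\zeta_j\cdot(\omega_{j+1}-\omega_j),
\]
with $f\in\mathcal S(\R^{2(\ell-1)})$ independent of $k$. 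The large parameter $k$ sits explicitly in front of a fixed phase, so the textbook stationary-phase theorem applies directly (Lemma~\ref{lma.b2}); the stationary set is $\{\omega_1=\cdots=\omega_\ell\}$, the Hessian is computed in closed form, and the argument is uniform in $\ell\geq 2$ with no even/odd dichotomy.

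Your physical-side route is geometrically appealing but pays for this in two places. First, the trace formula in the $z_j$ variables is only clean for odd $\ell$: for even $\ell$ the midpoint map $(x_1,\dots,x_\ell)\mapsto\bigl((x_j+x_{j+1})/2\bigr)_j$ is singular, so the representation you write acquires a distributional constraint which then has to be carried through the subsequent asymptotics---you note this but do not resolve it. Second, and more seriously, the parameter $k$ is not isolated in your phase: $S_\ell$ is a fixed quadratic form in the $z_j$, and the $k$-dependence enters only through the support and size of $t_k$. Your rescaling $\phi_j=k(\theta_j-\theta)$ and the claim that the $\phi_j$-integrals produce delta functions tacitly extend $\phi_j$ from an interval of length $2\pi k$ to all of $\R$; showing that the complementary ``non-thin'' region contributes $o(k^{1-\ell})$ requires iterated integration by parts across a dyadic decomposition in the angular separations, not a single non-stationary-phase step (one integration by parts gains only one factor of $k^{-1}$, while the annular supports contribute $k^\ell$ in volume, so several iterations and control of all derivatives of $t_k$ are needed). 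This can be carried out for $V\in C_0^\infty$, but it is precisely the work that the paper's Fourier-side formulation sidesteps by design.

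In short: both proofs ultimately rest on stationary phase, but the paper's choice of variables turns the problem into a direct application of the method with a $k$-independent Schwartz amplitude, whereas your formulation requires a bespoke oscillatory-integral estimate whose hardest parts you have correctly identified but not yet executed.
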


The proof is based on the following  technical lemma.
\begin{lemma}\label{lma.b2}
Let $\ell\in\N$, $\ell\geq 2$, $f\in\mathcal S(\R^{2(\ell-1)})$,
and let the function $\varphi:\mathbb T^{\ell}\times \R^{2(\ell-1)}\to\R$
be given by
\begin{equation}
\varphi(\bomega,\bz) = \sum_{j=1}^{\ell-1} z_j \cdot
(\omega_{j+1}-\omega_j), \label{c1}
\end{equation}
where
$\bz=(z_1,\dots,z_{\ell-1})\in \R^{2(\ell-1)}$,
$\bomega=(\omega_1,\dots,\omega_\ell)\in\mathbb T^{\ell}\subset\R^{2\ell}$,
and $\cdot$ denotes the scalar product in $\rd$.
Then
\begin{multline}
\lim_{k\to\infty}k^{\ell-1}
\int_{\R^{2(\ell-1)}} \, \int_{\mathbb T^{\ell}} f(\bz)
e^{ik\varphi(\bomega,\bz)}\, d\bomega\, d\bz
\\
=
(2\pi)^{\ell-1}
\int_{\mathbb T} \, \int_{\R^{\ell-1}}
\, f(\alpha_1\omega,\alpha_2\omega,\dots, \alpha_{\ell-1}\omega)\, d\alpha_1 d\alpha_2 \cdots d\alpha_{\ell-1}\,d\omega .
\label{c2}
\end{multline}
\end{lemma}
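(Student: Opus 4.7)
The plan is: (i) perform the $\bz$-integral first, exposing the ``symplectic'' Fourier transform $F$ of $f$ evaluated at $k(\omega_{j+1}-\omega_j)$; (ii) change variables on $\mathbb T^\ell$ to localize near the diagonal $\omega_1=\cdots=\omega_\ell$, where the integrand is concentrated as $k\to\infty$; and (iii) identify the resulting $\phi$-integral with the Radon-type quantity on the right-hand side of \eqref{c2}.

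For Step (i), since $f \in \mathcal S(\R^{2(\ell-1)})$, the function
\[
F(\bxi) := \int_{\R^{2(\ell-1)}} f(\bz)\, e^{i\bz\cdot\bxi}\, d\bz, \qquad \bxi \in \R^{2(\ell-1)},
\]
is Schwartz, and Fubini rewrites the left-hand side of \eqref{c2} as
\[
I_k := \int_{\mathbb T^\ell} F\bigl(k(\omega_2-\omega_1),\ldots,k(\omega_\ell-\omega_{\ell-1})\bigr) \, d\bomega.
\]
Parametrizing $\omega_j = (\cos\theta_j, \sin\theta_j)$, I switch to coordinates $\theta := \theta_1$, $\psi_j := \theta_{j+1}-\theta_j$ for $j=1,\ldots,\ell-1$. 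Since $|\omega_{j+1}-\omega_j| = 2|\sin(\psi_j/2)|$, the Schwartz decay of $F$ makes the portion of $I_k$ with $\max_j|\psi_j| \geq \delta$ of size $O_N(k^{-N})$ for every $N$, so only $|\psi_j|<\delta$ contributes. Rescaling $\psi_j = \phi_j/k$ produces the Jacobian $k^{-(\ell-1)}$; a short Taylor expansion shows $k(\omega_{j+1}-\omega_j) \to \phi_j\, \omega^\perp$, with $\omega := (\cos\theta,\sin\theta)$, uniformly on compact sets in $\phi$. Dominated convergence, with majorant obtained from $|F(\bxi)| \leq C_N \prod_j \jap{\xi_j}^{-N}$ and the elementary bound $|2\sin(\psi_j/2)| \geq c|\psi_j|$ for $|\psi_j|\leq\pi$ (which yields $|k(\omega_{j+1}-\omega_j)|\geq c|\phi_j|$), gives
\[
\lim_{k\to\infty} k^{\ell-1} I_k = \int_{\mathbb T} \int_{\R^{\ell-1}} F\bigl(\phi_1\omega^\perp,\ldots,\phi_{\ell-1}\omega^\perp\bigr)\, d\phi_1\cdots d\phi_{\ell-1}\, d\omega.
\]

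To finish, substitute the definition of $F$ back and swap the $\phi$- and $\bz$-integrals. Each $\phi_j$-integral produces $\int_\R e^{i\phi_j z_j\cdot\omega^\perp}\,d\phi_j = 2\pi\delta(z_j\cdot\omega^\perp)$, and these $\ell-1$ deltas force $z_j = \alpha_j\omega$ with $\alpha_j\in\R$ on the support. Decomposing $z_j = \alpha_j\omega + \beta_j\omega^\perp$ (with $dz_j = d\alpha_j\,d\beta_j$) and integrating out the $\beta_j$ against the deltas converts the right-hand side into $(2\pi)^{\ell-1}\int_{\mathbb T}\int_{\R^{\ell-1}} f(\alpha_1\omega,\ldots,\alpha_{\ell-1}\omega)\,d\alpha_1\cdots d\alpha_{\ell-1}\,d\omega$, matching \eqref{c2}.

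The step I expect to require the most care is the dominated-convergence argument: one must produce a $k$-uniform integrable majorant in $\phi$ that simultaneously handles the Schwartz tails of $F$ and the error incurred in replacing the ``centered'' unit vectors $\nu_j(\bomega)$ (orthogonal to the bisector of $\omega_j$ and $\omega_{j+1}$) by $\omega^\perp(\theta)$. This is routine bookkeeping once the scaling $\psi_j=\phi_j/k$ is in place, but the various estimates have to be combined consistently.
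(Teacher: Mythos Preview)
Your proof is correct and takes a genuinely different route from the paper's. The paper applies the stationary phase method directly to the full $(\bomega,\bz)$-integral: after parametrizing $\omega_j$ by angles relative to $\omega_\ell$ and writing $z_j=\alpha_j\omega_\ell+\beta_j\omega_\ell^\perp$, they locate the unique stationary point of the resulting phase $\Phi(\balpha,\bbeta,\btheta)$ at $\bbeta=\btheta=0$, compute the $2(\ell-1)\times 2(\ell-1)$ Hessian explicitly (finding $\det H=(-1)^{\ell-1}$ and $\sign H=0$), and then read off the leading term from the stationary phase formula. Your argument instead exploits the linearity of $\varphi$ in $\bz$ to integrate $\bz$ out \emph{exactly}, reducing everything to $\int_{\mathbb T^\ell}F\bigl(k(\omega_2-\omega_1),\ldots\bigr)\,d\bomega$; the asymptotics then follow by a direct rescaling and dominated-convergence step rather than stationary phase, and the final identification is just one-dimensional Fourier inversion in each $\phi_j$. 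Your approach is more elementary---no Hessian bookkeeping, no signature calculation---and makes it transparent why only the diagonal $\omega_1=\cdots=\omega_\ell$ contributes; the paper's approach is more systematic and would generalize more easily to phases not linear in $\bz$. One small remark: the ``swap and use $\delta$'' step at the end is most cleanly justified as Fourier inversion applied to the Schwartz function $g_\omega(\bbeta)=\int f(\alpha_1\omega+\beta_1\omega^\perp,\ldots)\,d\balpha$, which is exactly what your $(\alpha,\beta)$ decomposition produces.
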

\begin{proof}
The proof consists in  an application of the stationary phase
method.

We use the following parametrisation of the variables
$\bomega$, $\bz$:
\begin{align*}
\omega_\ell&=(\cos\theta,\sin\theta), \quad \theta\in[-\pi,\pi);
\\
\omega_j&=(\cos(\theta+\theta_j), \sin(\theta+\theta_j)),
\quad   \theta_j\in[-\pi,\pi), \quad j=1,\dots, \ell-1;
\\
z_j&=\alpha_j\omega_\ell+\beta_j\omega_\ell^\perp,
\quad \alpha_j, \beta_j\in\R, \quad  j=1,\dots, \ell-1.
\end{align*}
We write
$\balpha = (\alpha_1, \ldots, \alpha_{\ell - 1}) \in \re^{\ell-1}$,
$\bbeta = (\beta_1, \ldots, \beta_{\ell - 1}) \in\re^{\ell - 1}$,
$\btheta = (\theta_1,\ldots,\theta_{\ell-1}) \in[-\pi,\pi)^{\ell-1}$.
Using this notation, we can rewrite the integral in the l.h.s. of
\eqref{c2} as
\begin{multline}
\int_{\R^{2(\ell-1)}} \, \int_{\mathbb T^{\ell}} f(\bz)
e^{ik\varphi(\bomega,\bz)}\,d\bomega \,d\bz
\\
=
\int_{-\pi}^\pi \, \int_{(-\pi,\pi)^{\ell-1}}\,
\int_{\R^{\ell-1}}\, \int_{\R^{\ell-1}} \,
F(\balpha, \bbeta,\btheta,\theta)e^{ik \Phi(\balpha,\bbeta,\btheta)} \,d\bbeta\,d\balpha \,d\btheta\,d\theta,
\label{c3}
\end{multline}
where
$$
F(\balpha, \bbeta,\btheta,\theta)
=
f(\alpha_1\omega_\ell+\beta_1\omega_\ell^\perp,\dots,\alpha_{\ell-1}\omega_\ell+\beta_{\ell-1}\omega_\ell^\perp),
$$
and
$$
\Phi(\balpha,\bbeta,\btheta)
=
\alpha_1(1-\cos\theta_1)-\beta_1\sin\theta_1,
\text{ if $\ell=2$,}
$$
\begin{multline*}
\Phi(\balpha,\bbeta,\btheta)
=
\alpha_{\ell-1}-(\alpha_1\cos\theta_1+\beta_1\sin\theta_1)
\\
+
\sum_{j=2}^{\ell-1}((\alpha_{j-1}-\alpha_j)\cos\theta_j+(\beta_{j-1}-\beta_j)\sin\theta_j),
\text{ if $\ell\geq 3$.}
\end{multline*}

Let us consider the stationary points of the phase function
$\Phi$. By a direct calculation,
$\nabla\Phi(\balpha,\bbeta,\btheta)=0$ if and only if $\bbeta=0$
and $\btheta=0$. By a standard localisation argument, it follows
that the asymptotics of the integral \eqref{c3} will not change if
we multiply  $F$ by a function $\chi=\chi(\bbeta,\btheta)$, $\chi
\in C^\infty(\R^{\ell-1}\times[-\pi,\pi)^{\ell-1})$, such that
$\chi(\bbeta,\btheta)=1$ in an open neighbourhood of the origin
$\bbeta=0$, $\btheta=0$, and $\chi(\bbeta,\btheta)=0$ if
$\abs{\bbeta}\geq1/2$ or $\abs{\btheta}\geq\pi/2$.

Let us write
\begin{multline}
\int_{-\pi}^\pi \, \int_{(-\pi,\pi)^{\ell-1}} \,
\int_{\R^{\ell-1}} \, \int_{\R^{\ell-1}} \,
F(\balpha, \bbeta,\btheta,\theta)\chi(\bbeta,\btheta)
e^{ik \Phi(\balpha,\bbeta,\btheta)} \, d\bbeta \, d\balpha \, d\btheta \, d\theta
\\
=
\int_{-\pi}^\pi
\int_{\R^{\ell-1}}
I(k;\balpha,\theta) \, d\balpha \, d\theta ,
\label{c4}
\end{multline}
where
$$
I(k;\balpha,\theta)
=
\int_{(-\pi,\pi)^{\ell-1}} \,
\int_{\R^{\ell-1}} \,
F(\balpha, \bbeta,\btheta,\theta)\chi(\bbeta,\btheta)
e^{ik \Phi(\balpha,\bbeta,\btheta)} \,d\bbeta  \,d\btheta.
$$
Let us fix $\balpha$, $\theta$ and compute the
asymptotics of the integral
$I(k;\balpha,\theta)$ as $k\to\infty$.
A direct calculation shows that the stationary phase
equations
$$
\frac{\partial\Phi}{\partial\beta_j}=0,
\quad
\frac{\partial\Phi}{\partial\theta_j}=0,
\quad
j=1,\dots,\ell-1
$$
are simultaneously satisfied on the support of $\chi$ if and only
if $\bbeta=0$, $\btheta=0$. In order to apply the stationary phase
method, we need to compute the determinant and the signature (i.e.
the difference between the number of positive and negative
eigenvalues) of the Hessian of $\Phi(\balpha,\bbeta,\btheta)$ with
respect to the variables $\bbeta$, $\btheta$. Let us denote this
Hessian by $H(\balpha)$. The $2(\ell - 1) \times 2(\ell - 1)$
matrix $H(\balpha)$ can be represented in a block form
$$
H(\balpha) = \left(
\begin{array} {cc}
H_{11}(\balpha) & H_{12}(\balpha)\\
H_{21}(\balpha) & H_{22}(\balpha)
\end{array}
\right)
$$
 where
$$
H_{11}(\balpha) : = \left\{\frac{\partial^2 \Phi}{\partial \beta_p
\partial \beta_q}(\balpha,0,0) \right\}_{p,q=1}^{\ell-1}, \quad
H_{12}(\balpha) : = \left\{\frac{\partial^2 \Phi}{\partial \beta_p
\partial \theta_q}(\balpha,0,0) \right\}_{p,q=1}^{\ell-1},
$$
$$
H_{21}(\balpha) : = \left\{\frac{\partial^2 \Phi}{\partial
\theta_p
\partial \beta_q}(\balpha,0,0) \right\}_{p,q=1}^{\ell-1}, \quad
H_{22}(\balpha) : = \left\{\frac{\partial^2 \Phi}{\partial
\theta_p
\partial \theta_q}(\balpha,0,0) \right\}_{p,q=1}^{\ell-1}.
$$
An explicit computation shows that
$$
H_{11} = 0, \quad H_{12} = \left\{-\delta_{q,p} +
\delta_{q,p+1}\right\}_{p,q=1}^{\ell - 1},
$$
$$
H_{21} = H_{12}^T, \quad H_{22}(\balpha) = {\rm
diag}\left\{\alpha_1, \alpha_2 - \alpha_1,\ldots, \alpha_{\ell -
1} - \alpha_{\ell - 2}\right\}.
$$
Hence,
$$
{\rm det}_{2(\ell - 1)} H(\balpha) = {\rm det}_{\ell - 1}(- H_{12}
H_{21}) = (-1)^{\ell -1};
$$
in particular, our stationary point is non-degenerate.

In order to calculate the signature
of $H(\balpha)$, note that since $\det H(\balpha)\not=0$
for all $\balpha$ and $H(\balpha)$ depends smoothly (in fact,
polynomially) on $\balpha$, the signature is independent of $\balpha$.
Some elementary analysis shows that $\sign H(0)=0$.

Now we can apply a suitable version of the stationary phase
method (see e.g. \cite[Chapter 1]{guist} or \cite[Chapter III,
Section 2]{fed}) to calculate the asymptotics of
$I(k;\balpha,\theta)$. This yields
$$
\lim_{k\to\infty}k^{\ell-1}I(k;\balpha,\theta)
=
(2\pi)^{\ell-1}F(\balpha,0,0,\theta).
$$
Using, for example, the Lebesgue dominated convergence
theorem, one concludes that the above asymptotics can
be integrated over $\balpha$ and $\theta$,
see \eqref{c4}. This yields the required result \eqref{c2}.
\end{proof}

\begin{proof}[Proof of Theorem~\ref{th.b4}]
We use the notation $t_k$, see \eqref{end1}.
Denote by $t_{k, \ell}$ the Weyl symbol of the operator
$(\Op(t_k))^\ell$,  i.e.
$$
\Op(t_{k, \ell}) =(\Op(t_{k}))^\ell, \quad \ell=2,3,\dots.
$$
By the standard Weyl
pseudodifferential calculus
(see e.g. \cite[Chapter 7, Eq. (14.21)]{tayii}), for $\zeta_\ell \in \rd$ we have
$$
\hat{t}_{k, \ell}(\zeta_\ell)
=
(2\pi)^{-\ell + 1}
\int_{\re^{2(\ell-1)}} \hat{t}_k(\zeta_\ell - \zeta_{\ell - 1})
\hat{t}_k(\zeta_{\ell - 1} - \zeta_{\ell - 2}) \cdots
\hat{t}_k(\zeta_1) e^{\frac{i}{2}\sum_{j=2}^{\ell} \sigma(\zeta_j,
\zeta_{j-1})} d\bzeta
$$
where $\sigma(\cdot, \cdot)$ is the symplectic form in
$\rd \times\rd$,
and $\bzeta=(\zeta_1,\dots,\zeta_{\ell-1})$.
It follows that
\begin{multline}
\Tr \bigl(\Op(t_k)\bigr)^\ell
=
\Tr \Op(t_{k,\ell}) =
\frac1{2\pi}\int_{\R^2}t_{k,\ell}(x)dx = \hat t_{k,\ell}(0)
\\
=
(2\pi)^{-\ell + 1}
\int_{\re^{2(\ell-1)}} \hat{t}_k(-\zeta_{\ell - 1})
\hat{t}_k(\zeta_{\ell - 1} - \zeta_{\ell - 2}) \cdots \hat{t}_k(\zeta_1)
\exp(\frac{i}{2}\sum_{j=2}^{\ell-1} \sigma(\zeta_j,\zeta_{j-1}))
d\bzeta,
\label{c5}
\end{multline}
where we use the convention that $\sum_{j=2}^{\ell-1}=0$ if
$\ell=2$.

Recalling \eqref{b13}, \eqref{b14}, we get
$$
\hat t_k(\zeta)
=
\frac1{2\pi}\widehat V_B(\zeta)\int_{\mathbb T}e^{-ik\omega\zeta}d\omega,
\quad
\zeta\in\R^2,
$$
and so, substituting into \eqref{c5}, we get
$$
\Tr\bigl(\Op(t_k)\bigr)^\ell =
\int_{\R^{2(\ell-1)}}\,\int_{\mathbb T^\ell}\, f(\bzeta)
e^{ik\varphi(\bomega,\bzeta)}\, d\bomega \, d\bzeta,
$$
where
$$
f(\bzeta)
=
(2\pi)^{-2\ell+1}
\widehat V_B(-\zeta_{\ell - 1})
\widehat V_B(\zeta_{\ell - 1} - \zeta_{\ell - 2}) \cdots \widehat V_B(\zeta_1)
\exp\bigl({\textstyle \frac{i}{2} \sum_{j=2}^{\ell-1} \sigma(\zeta_j,\zeta_{j-1})}\bigr),
$$
and $\varphi$ is given by \eqref{c1}.
Applying Lemma~\ref{lma.b2}, we obtain
\begin{multline}
\lim_{k\to\infty} k^{\ell-1}
\Tr\bigl(\Op(t_k)\bigr)^\ell
\\
= (2\pi)^{-\ell}\int_{\mathbb T} \int_{\R^{\ell-1}}\,
\widehat V_B(-\alpha_{\ell-1}\omega) \widehat
V_B((\alpha_{\ell-1}-\alpha_{\ell-2})\omega) \cdots\widehat
V_B((\alpha_2-\alpha_1)\omega)\widehat V_B(\alpha_1\omega) \,
d\balpha \, d\omega. \label{c6}
\end{multline}

It remains to transform the last identity into \eqref{sof12}.
We have
$$
\widehat V_B(\alpha \omega)
=
B\int_{\R} e^{-i\alpha
bB^{1/2}}\widetilde{V_1}(\omega,b)\,db
$$
where, in accordance with  \eqref{sof30}, we use the notation
$V_1(x,y) = V(-y,-x)$, $(x,y) \in \rd$. Therefore,
\begin{multline}
(2\pi)^{-\ell}\int_{\mathbb T} \int_{\R^{\ell-1}}\, \widehat
V_B(-\alpha_{\ell-1}\omega) \widehat
V_B((\alpha_{\ell-1}-\alpha_{\ell-2})\omega) \cdots\widehat
V_B((\alpha_2-\alpha_1)\omega)\widehat V_B(\alpha_1\omega) \,
d\balpha \, d\omega
\\
= \frac{B^{(1+\ell)/2}}{2\pi} \int_{\mathbb T} \, \int_\R
\widetilde{V_1}(\omega,b)^\ell \, db \, d\omega
=
\frac{B^{(1+\ell)/2}}{2\pi} \int_{\mathbb T} \, \int_\R
\widetilde{V}(\omega,b)^\ell \, db \, d\omega. \label{end10}
\end{multline}
Now \eqref{sof12} follows from \eqref{c6} and \eqref{end10}.
\end{proof}

\subsection{Proof of Theorem 1.6.}
(i) First we observe that
\begin{multline*}
\norm{P_q V P_q}
\leq
\norm{P_q\jap{\cdot}^{-\rho/2}}
\norm{\jap{\cdot}^\rho V}
\norm{\jap{\cdot}^{-\rho/2}P_q}
\\
\leq
\norm{V}_{X_\rho}
\norm{P_q\jap{\cdot}^{-\rho/2}}
\norm{\jap{\cdot}^{-\rho/2}P_q}
=
\norm{V}_{X_\rho}
\norm{P_q\jap{\cdot}^{-\rho}P_q},
\end{multline*}
and so it suffices to consider the case $V(\bx)=\jap{\bx}^{-\rho}$.

Next, by Corollary~\ref{fgr1}, we have
$$
\norm{P_q VP_q}
=
\norm{\Op (V_B* \Psi_q)}.
$$
In order to estimate the norm of $\Op (V_B* \Psi_q)$, we  use
Lemmas~\ref{th.b3} and \ref{lma.b3}.
We note that for $V(\bx)=\jap{\bx}^{-\rho}$, $\rho>1$, we have
$\widehat V\in L^1$ and
$$
\abs{\widehat V(\zeta)}\leq C_N \abs{\zeta}^{-N},
\quad
\abs{\zeta}\geq 1,
$$
for all $N\geq 1$ (see e.g. \cite[Chapter XII, Lemma 3.1]{tay}).
Thus, the integral in the r.h.s. of \eqref{c13} is convergent, and so the proof
of  \eqref{c13} applies to $V(\bx)=\jap{\bx}^{-\rho}$, $\rho>1$.
Now, combining Lemmas~\ref{th.b3} and \ref{lma.b3}, we get
\begin{multline*}
\sup_{q\geq0}\sup_{B\geq B_0}
\lambda_q^{1/2} B^{-1} \norm{\Op(V_B* \Psi_q)}
\\
\leq
\sup_{q\geq0}\sup_{B\geq B_0}
\lambda_q^{3/4} B^{-1}
\norm{\Op(V_B* \Psi_q)-\Op(V_B*\delta_{\sqrt{2q+1}})}
\\
+
\sup_{q\geq0}\sup_{B\geq B_0}
\lambda_q^{1/2} B^{-1}
\norm{\Op(V_B*\delta_{\sqrt{2q+1}})}
<\infty,
\end{multline*}
which proves the required estimate.

(ii)
 As in the proof of part (i), we may assume $V(\bx)=\jap{\bx}^{-\rho}$.
First let us consider the case $\rho>2$, $\ell=1$.
By Lemma~\ref{l21} with $\ell=1$ we have
$$
B^{-1}\norm{P_qVP_q}_1
=
\frac1{2\pi}
\int_{\R^2}V(\bx)d\bx
\leq
\frac1{2\pi}
\norm{V}_{X_\rho}
\int_{\R^2}\jap{\bx}^{-\rho}d\bx,
$$
which proves \eqref{b1} in this case.

Let us consider the case of a general $\ell$.
For a fixed $s>1$ and any
$\ell\in[1,\infty]$, let
$$
M_q^{(\ell)} = B^{-1}\lambda_q^{\frac12-\frac1{2\ell}}
P_q \jap{\cdot}^{-s (1+\frac1\ell)} P_q;
$$
for $\ell=\infty$, one should replace $1/\ell$ by $0$.
By the previous step of the proof and part (i) of the theorem,
$$
\sup_{q\geq0}\sup_{B\geq B_0}
\norm{M_q^{(1)}}_1
\leq
C_1<\infty, \quad
\sup_{q\geq0}\sup_{B\geq B_0}
\norm{M_q^{(\infty)}}\leq C_\infty<\infty,
$$
where the constants $C_1$, $C_\infty$ depend only on $B_0$ and $s$.
Applying the Calderon-Lions interpolation theorem
(see e.g. \cite[Theorem IX.20]{RS2}), we get
$$
\sup_{q\geq0}\sup_{B\geq B_0}
\norm{M_q^{(\ell)}}_\ell
\leq
C_1^{1/\ell}C_\infty^{(\ell-1)/\ell}
<
\infty
$$
for all $\ell \geq 1$. It is easy to see that the last statement is
equivalent to \eqref{b1}.

(iii)
First let us note that the case $\ell=1$ is straightforward.
Indeed, if the integer $\ell =1$ is
admissible, i.e. if $\ell = 1 > 1/(\rho-1)$, then $\rho> 2$, $
V \in L^1(\rd)$, and \eqref{gr18} yields the identity
$$
\Tr P_q V P_q
=
\frac{B}{2\pi} \int_{\rd} V(\bx) d\bx
=
\frac{B}{2\pi} \int_{\mathbb T} \int_\re \widetilde{V}(\omega,b)
\, db \, d\omega.
$$
Thus, we may now assume $\ell\geq2$.
We will first prove the required identity \eqref{12} for
$V\in C_0^\infty(\R^2)$ and then use a limiting argument to
extend it to all $V\in X_\rho$.
Denote
\begin{equation}
\gamma_\ell(V)
=
\frac{B^\ell}{2\pi} \int_{{\mathbb T}} \, \int_{\R} \;
\widetilde{V}(\omega,b)^\ell \, db \, d\omega.
\label{c11}
\end{equation}
By Corollary~\ref{fgr1}, we have
$$
\Tr(P_qVP_q)^\ell =
\Tr\bigl(\Op(V_B*\Psi_q)\bigr)^\ell,
$$
and Theorem~\ref{th.b4} says
$$
\lim_{q \to \infty}
\lambda_q^{(\ell-1)/2}
\Tr\bigl(\Op(V_B*\delta_{\sqrt{2q+1}})\bigr)^\ell
= \gamma_\ell(V).
$$
Thus, it suffices to prove that
\begin{equation}
\lim_{q \to \infty}
\lambda_q^{(\ell-1)/2}
\abs{\Tr\bigl(\Op(V_B*\Psi_q)\bigr)^\ell
-
\Tr\bigl(\Op(V_B*\delta_{\sqrt{2q+1}})\bigr)^\ell}
=
0.
\label{c16}
\end{equation}
In order to prove \eqref{c16},
let us first display an elementary estimate
\begin{equation}
\abs{\Tr \left(A_1^\ell\right) - \Tr\left(A_2^\ell\right)} \leq
\ell \max\{\norm{A_1}^{\ell-1}_\ell,
\norm{A_2}^{\ell-1}_\ell\}\norm{A_1-A_2}_\ell; \label{c10}
\end{equation}
here $\ell\in\N$ and $A_n\in S_\ell$, $n=1,2$. The estimate
follows from the formula
$$
A_1^\ell-A_2^\ell = \sum_{j=0}^{\ell-1}
A_1^{\ell-j-1}(A_1-A_2)A_2^j
$$
and the H\"older type inequality for the $S_\ell$ classes.

Next,
using Corollary~\ref{fgr1} and part (ii) of the Theorem, we get
\begin{equation}
\limsup_{q\to\infty}
\lambda_q^{(\ell-1)/(2\ell)}\norm{\Op(V_B*\Psi_q)}_\ell
=
\limsup_{q\to\infty}
\lambda_q^{(\ell-1)/(2\ell)}\norm{P_q V P_q}_\ell
<
\infty.
\label{c17}
\end{equation}

Further, by estimate \eqref{c14}, using the assumption $\ell\geq2$, we get
\begin{multline}
\limsup_{q\to\infty}
\lambda_q^{(\ell-1)/(2\ell)}
\norm{\Op(V_B*\Psi_q)-\Op(V_B*\delta_{\sqrt{2q+1}})}_\ell
\\
\leq
\limsup_{q\to\infty}
\lambda_q^{1/2}
\norm{\Op(V_B*\Psi_q)-\Op(V_B*\delta_{\sqrt{2q+1}})}_2
=0.
\label{c18}
\end{multline}
Combining \eqref{c10}, \eqref{c17} and \eqref{c18},
we obtain \eqref{c16} for $V\in C_0^\infty$; thus, \eqref{12} is proven for
this class of potentials.

It remains to extend \eqref{12} to all potentials
$V\in C(\re^2)$ that satisfy \eqref{rho}.
For $\ell>1/(\rho-1)$, denote
\begin{align*}
\Delta_\ell(V)
&=
\limsup_{q\to\infty}
\lambda_q^{(\ell-1)/2}\Tr(P_qVP_q)^\ell,
\\
\delta_\ell(V)
&=
\liminf_{q\to\infty}
\lambda_q^{(\ell-1)/2}\Tr(P_qVP_q)^\ell.
\end{align*}
Above we have proven that
\begin{equation}
\Delta_\ell(V)=\delta_\ell(V)=\gamma_\ell(V)
\label{c7}
\end{equation}
for all potentials $V\in C_0^\infty(\R^2)$; now we need to extend
this identity to all $V\in X_\rho$.
From \eqref{a4} we obtain, similarly to \eqref{c10},
\begin{multline*}
\abs{\gamma_\ell(V_1)-\gamma_\ell(V_2)}
\leq
\frac{B^\ell}{2\pi}
\int_{\mathbb T} \int_{\R}
\abs{\widetilde V_1(\omega,b)^\ell-\widetilde V_2(\omega,b)^\ell}db d\omega
\\
\leq
\frac{B^\ell}{2\pi}
C\max\{\norm{V_1}_{X_\rho}^{\ell-1},\norm{V_2}_{X_\rho}^{\ell-1}\}\norm{V_1-V_2}_{X_\rho}
\int_{\mathbb T} \int_{\R} \jap{b}^{(1-\rho)\ell}db\, d\omega.
\end{multline*}
It follows that $\gamma_\ell$ is a continuous functional on $X_\rho$.
Similarly, using \eqref{c10} and part (ii) of the Theorem, we get
$$
\limsup_{q\to\infty}
\lambda_q^{(\ell-1)/2}\abs{\Tr(P_qV_1P_q)^\ell-\Tr(P_qV_2P_q)^\ell}
\leq
C\max\{\norm{V_1}_{X_\rho}^{\ell-1}, \norm{V_2}_{X_\rho}^{\ell-1}\}
\norm{V_1-V_2}_{X_\rho},
$$
and so the functionals $\Delta_\ell$, $\delta_\ell$ are continuous on $X_\rho$.
It follows that \eqref{c7} extends by continuity from $C_0^\infty$ to
the closure $X_\rho^0$ of $C_0^\infty$ in $X_\rho$.
In order to prove \eqref{c7} for all $V\in X_\rho$, one can argue
as follows. For a given $\ell>1/(\rho-1)$, choose $\rho_1$ such
that $1<\rho_1<\rho$ and $\ell>1/(\rho_1-1)$. Then $X_\rho\subset
X_{\rho_1}^0$ and by the same argument as above, \eqref{c7} holds
true for all $V\in X_{\rho_1}^0$. \qed

\section{Proof of Proposition 1.1 and Theorem 1.3}\label{s3}

As already indicated, this section heavily uses the construction of \cite{korpu}.

\subsection{Proof of Proposition 1.1.}\label{s3a}

Set $R_0(z):=(H_0-zI)^{-1}$.
By the Birman-Schwinger principle, if $\lambda \in \re \setminus \cup_{q = 0}^{\infty}\{\lambda_q\}$ is an eigenvalue of the operator $H$, then $-1$ is an eigenvalue of the operator $|V|^{1/2} R_0(\lambda) V^{1/2}$. Hence, it
suffices to show that for some $C>0$ and all sufficiently large
$q$, we have
\begin{equation}
\norm{\abs{V}^{1/2}R_0(\lambda)\abs{V}^{1/2}}<1,
\quad\text{ for all }
\lambda\in[\lambda_q-B,\lambda_q+B],
\quad
\abs{\lambda-\lambda_q}>\frac{C}{\sqrt{q}}.
\label{d3}
\end{equation}
Choose $m\in\mathbb N$ sufficiently large so that
$\norm{V}/\lambda_m<1/2$,
and write $R_0(\lambda)$ as
$$
R_0(\lambda)=\sum_{k=q-m}^{q+m}\frac{P_k}{\lambda_k-\lambda}+\widetilde R_0(\lambda).
$$
Then, for $\lambda\in[\lambda_q-B,\lambda_q+B]$,
$$
\norm{\abs{V}^{1/2}R_0(\lambda)\abs{V}^{1/2}}
\leq
\sum_{k=q-m}^{q+m}
\frac{\norm{\abs{V}^{1/2}P_k\abs{V}^{1/2}}}{\abs{\lambda_k-\lambda}}
+
\norm{\abs{V}^{1/2}\widetilde R_0(\lambda)\abs{V}^{1/2}}.
$$
By the choice of $m$, one has
$$
\norm{\abs{V}^{1/2}\widetilde R_0(\lambda)\abs{V}^{1/2}}
\leq
\norm{\abs{V}^{1/2}}(1/\lambda_m)\norm{\abs{V}^{1/2}}
=
\norm{V}/\lambda_m
<1/2.
$$
On the other hand, by Theorem~\ref{th13}(i),
$$
\sum_{k=q-m}^{q+m}
\frac{\norm{\abs{V}^{1/2}P_k\abs{V}^{1/2}}}{\abs{\lambda_k-\lambda}}
\leq(2m+1)O(q^{-1/2})\max_{q-m\leq k\leq q+m}\abs{\lambda_k-\lambda}^{-1}
=O(q^{-1/2})\abs{\lambda_q-\lambda}^{-1}.
$$
Thus, we get \eqref{d3} for sufficiently large $C>0$.
\qed

\subsection{Resolvent estimates}\label{s3b}
Let $\Gamma_q$ be a positively oriented circle of center
$\lambda_q$ and  radius $B$.
\begin{lemma}\label{lma.d1}
Let $V$ satisfy \eqref{rho}. Then for any $\ell>1$,
$\ell>1/(\rho-1)$, one has
\begin{align}
\sup_{z\in\Gamma_q}
\norm{\abs{V}^{1/2}R_0(z)\abs{V}^{1/2}}_{\ell}
&=
O(q^{-(\ell-1)/2\ell}\log q),
\quad q\to\infty,
\label{d1}
\\
\sup_{z\in\Gamma_q}
\norm{\abs{V}^{1/2}R_0(z)}_{2\ell}
&=
O(q^{-(\ell-1)/4\ell}\log q),
\quad q\to\infty.
\label{d2}
\end{align}
\end{lemma}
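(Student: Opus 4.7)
The plan is to expand $R_0(z)$ spectrally as $R_0(z) = \sum_{k=0}^\infty P_k/(\lambda_k - z)$, estimate the resulting series in the appropriate Schatten class term by term, and then sum the bounds. The key preliminary observation is that $|V|^{1/2}P_k|V|^{1/2}$ and $P_k|V|P_k$ share the same non-zero singular values (being $TT^*$ and $T^*T$ respectively for $T = |V|^{1/2}P_k$), so
$$\norm{|V|^{1/2} P_k |V|^{1/2}}_\ell = \norm{P_k |V| P_k}_\ell \leq C B \lambda_k^{-(\ell-1)/(2\ell)}$$
by Theorem~\ref{th13}(ii) applied to $|V|\in X_\rho$ (which has the same $X_\rho$-norm as $V$ and is continuous). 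Note that the condition $\ell > 1/(\rho-1)$ is used exactly to invoke that theorem.

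For \eqref{d1}, apply the triangle inequality in $S_\ell$ to
$$|V|^{1/2} R_0(z) |V|^{1/2} = \sum_{k=0}^\infty \frac{|V|^{1/2} P_k |V|^{1/2}}{\lambda_k - z},$$
reducing matters to bounding $\sum_{k=0}^\infty \lambda_k^{-(\ell-1)/(2\ell)}/|\lambda_k - z|$. On $\Gamma_q$ we have $|\lambda_k - z| \geq B\max(1, 2|k-q|-1)$, so I would split the sum into three regimes: $|k-q| \leq q/2$, $k < q/2$, and $k > 3q/2$. The near-$q$ regime produces a harmonic-type sum $\sum 1/|k-q|$ which yields the logarithmic factor and an overall contribution $O(\lambda_q^{-(\ell-1)/(2\ell)} \log q)$; the $k < q/2$ range is controlled by $|\lambda_k - z| \gtrsim \lambda_q$, crudely summing $\sum_{k<q/2}\lambda_k^{-(\ell-1)/(2\ell)} = O(q^{(\ell+1)/(2\ell)})$ and dividing by $\lambda_q$; the $k > 3q/2$ tail has summand $\sim \lambda_k^{-1-(\ell-1)/(2\ell)}$, whose tail beyond $q$ is $O(\lambda_q^{-(\ell-1)/(2\ell)})$. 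All three pieces together give the claimed $O(q^{-(\ell-1)/(2\ell)} \log q)$.

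For \eqref{d2}, use the identity $\norm{A}_{2\ell}^2 = \norm{AA^*}_\ell$. Since $R_0(z)^* = R_0(\bar z)$, the spectral calculus gives $R_0(z) R_0(z)^* = \sum_k P_k/|\lambda_k - z|^2$, so
$$\norm{|V|^{1/2} R_0(z)}_{2\ell}^2 = \norm{|V|^{1/2} R_0(z) R_0(z)^* |V|^{1/2}}_\ell \leq \sum_{k=0}^\infty \frac{\norm{P_k |V| P_k}_\ell}{|\lambda_k - z|^2}.$$
The same three-way split gives a bound of order $\lambda_q^{-(\ell-1)/(2\ell)}$ (the squared denominator makes the sums converge faster, and in fact no logarithmic factor is strictly needed), whence \eqref{d2} follows by taking a square root.

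The main technical obstacle is the far tail $k > 3q/2$: one must check that the decay rate $\lambda_k^{-1-(\ell-1)/(2\ell)}$ inherited from Theorem~\ref{th13}(ii) is actually summable, which requires the exponent to exceed $1$. This holds precisely under the hypothesis $\ell > 1$ in the lemma, so both assumptions in the statement play distinct and essential roles. The rest is careful bookkeeping together with the observation that the partial sums defining $|V|^{1/2}R_0(z)|V|^{1/2}$ converge in $S_\ell$ once the numerical series on the right-hand side is shown to converge.
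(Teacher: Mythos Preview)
Your proposal is correct and follows essentially the same route as the paper: spectral expansion of $R_0(z)$, the identity $\norm{|V|^{1/2}P_k|V|^{1/2}}_\ell = \norm{P_k|V|P_k}_\ell$, the bound from Theorem~\ref{th13}(ii), and then summation. The paper handles the resulting series by integral comparison rather than your three-regime split, and for \eqref{d2} it uses $\norm{|V|^{1/2}P_k}_{2\ell}^2 = \norm{|V|^{1/2}P_k|V|^{1/2}}_\ell$ term by term (giving a series with $|\lambda_k - z|^{-1}$ and exponent $-(\ell-1)/(4\ell)$) rather than your squaring of the full operator (which gives $|\lambda_k - z|^{-2}$); these are cosmetic differences, and your observation that the squared denominator actually removes the need for the logarithm in \eqref{d2} is a small bonus.
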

\begin{proof}
Let us prove \eqref{d1}.
Using the estimate \eqref{b1}, we get
for $z\in\Gamma_q$:
\begin{multline*}
\norm{\abs{V}^{1/2}R_0(z)\abs{V}^{1/2}}_{\ell}
\leq
\sum_{k=0}^\infty
\frac{\norm{\abs{V}^{1/2}P_k\abs{V}^{1/2}}_{\ell}}{\abs{\lambda_k-z}}
\leq
\sum_{k=0}^\infty\frac{C(1+k)^{-(\ell-1)/2\ell}}{\abs{\lambda_k-z}}
\\
\leq
C\int_0^{q-1}\frac{(1+x)^{-(\ell-1)/2\ell}}{\abs{B(2x+1)-z}}dx
+C\int_{q+1}^\infty\frac{(1+x)^{-(\ell-1)/2\ell}}{\abs{B(2x+1)-z}}dx
+
O(q^{-(\ell-1)/2\ell})
\\
=O(q^{-(\ell-1)/2\ell}\log q),
\end{multline*}
as $q\to\infty$.
This proves \eqref{d1}.
Using the fact that
$$
\norm{\abs{V}^{1/2}P_q}_{2\ell}^2
=
\norm{\abs{V}^{1/2}P_q\abs{V}^{1/2}}_{\ell},
$$
one proves the estimate \eqref{d2} in the same way.
\end{proof}

\subsection{Proof of Lemma 1.5.}\label{s3c}
The fact that $(P_qVP_q)^\ell\in S_1$ follows directly from Theorem~\ref{th13}.
Let, as above, $\Gamma_q$ be a positively oriented circle with the
centre $\lambda_q$ and radius $B$. Let $q$ be sufficiently large
so that (see Proposition~\ref{p11})  the contour $\Gamma_q$ does
not intersect the spectrum of $H$. We will use the formula
\bel{32} (H-\lambda_q)^\ell \one_{(\lambda_q - B, \lambda_q +
B)}(H) = -\frac{1}{2\pi i} \int_{\Gamma_q} (z-\lambda_q)^\ell
R(z)dz, \ee where $R(z)= (H-zI)^{-1}$. Let us expand the resolvent
$R(z)$ in the r.h.s. of \eqref{32} in the standard perturbation
series:
\bel{37} R(z) = R_0(z) + \sum_{j=1}^{\infty} (-1)^j
R_0(z)(V R_0(z))^j. \ee Let us discuss the convergence of these
series for $z\in\Gamma_q$, $q$ large. Denote $W=\abs{V}^{1/2}$,
$W_0=\sign(V)$. For $j\geq\ell$, we have
\begin{multline}
\norm{R_0(z)(VR_0(z))^j}_{1}
=
\norm{(R_0(z)W)(W_0WR_0(z)W)^{j-1}W_0(WR_0(z))}_{1}
\\
\leq
\norm{R_0(z)W}_{2\ell}
\norm{WR_0(z)W}_{\ell}^{j-1}
\norm{WR_0(z)}_{2\ell},
\quad
j\geq\ell.
\label{d1a}
\end{multline}
Applying Lemma~\ref{lma.d1}, we get that the series
in the r.h.s. of \eqref{37} converges in the trace norm
for $z\in\Gamma_q$ and $q$ sufficiently large
(note that although the tail of the series converges in the trace
class, the series itself is not necessarily trace class).

Next, it is easy to see that the integrals
$$
\int_{\Gamma_q} (z-\lambda_q)^\ell R_0(z)(VR_0(z))^j dz
$$
with $j<\ell$ vanish (since the integrand is analytic inside
$\Gamma_q$). Thus, recalling \eqref{d1a}, we obtain that the
operator $(H-\lambda_q)^\ell \one_{(\lambda_q - B, \lambda_q +
B)}(H)$ belongs to the trace class and
\begin{multline}
\Tr\{(H- \lambda_q)^\ell \one_{(\lambda_q - B, \lambda_q +
B)}(H)\}
\\
=
\label{38}
-\frac{1}{2\pi i}  \sum_{j=\ell}^{\infty} (-1)^j \int_{\Gamma_q} (z -
\lambda_q)^\ell
\Tr [R_0(z) (VR_0(z))^j]dz.
\end{multline}
Integrating by parts in each term of this series and computing
the term with $j=\ell$ by
the residue theorem, we obtain
\begin{multline}
\Tr\{(H - \lambda_q)^\ell
\one_{(\lambda_q - B, \lambda_q +B)}(H)\}
\\
=
\label{310}
\Tr(P_qVP_q)^\ell + \frac{\ell}{2\pi i}
\sum_{j=\ell+1}^{\infty} \frac{(-1)^j}{j}
\int_{\Gamma_q} (z - \lambda_q)^{\ell-1}
\Tr (V R_0(z))^j dz.
\end{multline}
It remains to estimate the series in the r.h.s.
of \eqref{310}.
This can be easily done by using Lemma~\ref{lma.d1}.
Similarly to \eqref{d1a}, we have
$$
\abs{\Tr(VR_0(z))^j}
\leq
\norm{WR_0(z)W}^j_{j} \leq \norm{WR_0(z)W}^j_{\ell}, \quad j \geq \ell,
$$
and so
$$
\left\lvert
\int_{\Gamma_q} (z - \lambda_q)^{\ell-1}
\Tr (V R_0(z))^j dz
\right\rvert
\leq
C_1(C_2 q^{-(\ell-1)/2\ell}\log q)^j,
$$
for all sufficiently large $q$.
Thus, the series in the r.h.s. of \eqref{310}
can be estimated by
$$
C_1\sum_{j=\ell+1}^\infty C_2^j q^{-\frac{\ell-1}{2\ell}j}(\log q)^j.
$$
For all sufficiently large $q$,
this series converges and can be estimated
as $o(q^{-(\ell-1)/2})$.
\qed
\subsection{Proof of Theorem 1.3.}\label{s32}

Let $R \geq C_1$  where $C_1$ is the constant from
Proposition~\ref{p11}. Then
    \bel{312}
\one_{[-R,R]}(\lambda_q^{1/2}(H-\lambda_q)) = \one_{(\lambda_q -
B, \lambda_q + B)}(H), \quad  q \in {\mathbb Z}_+. \ee

Next, choose $R\geq C_1$ so large that $\supp \varrho \subset [-R,
R]$. Let $\ell_0$ be an even natural number satisfying
$\ell_0>1/(\rho-1)$. Since $\varrho (\lambda)$ by assumption
vanishes near $\lambda = 0$, the function
$\varrho(\lambda)/\lambda^{\ell_0}$ is smooth. Applying the
Weierstrass approximation theorem to this function on the interval
$[-R,R]$, we obtain that for any $\varepsilon>0$ there exist
polynomials $P_+$, $P_-$ such that
    \bel{313}
    P_{\pm}(0) = P'_{\pm}(0) =\dots=P^{(\ell_0-1)}_{\pm}(0)=0,
    \ee
    \bel{314}
    P_-(\lambda) \leq \varrho(\lambda) \leq P_+(\lambda), \quad \forall
    \lambda \in [-R, R],
    \ee
    \bel{315}
    P_+(\lambda) - P_-(\lambda) \leq \varepsilon \lambda^{\ell_0}, \quad \forall
    \lambda \in [-R, R].
    \ee
    Thus, we can write
    $$
    \one_{[-R,R]}(\lambda) P_-(\lambda) \leq \varrho(\lambda) \leq \one_{[-R,R]}(\lambda)
    P_+(\lambda),
    $$
for any $\lambda \in [-R, R]$, and therefore
    $$
    \Tr \{\one_{[-R,R]}(\lambda_q^{1/2}(H-\lambda_q))\,
    P_-(\lambda_q^{1/2}(H-\lambda_q))\} \leq \Tr \, \varrho (\lambda_q^{1/2}(H-\lambda_q))
    $$
    $$
    \leq \Tr \{\one_{[-R,R]}(\lambda_q^{1/2}(H-\lambda_q))\, P_+(\lambda_q^{1/2}(H-\lambda_q))\}.
    $$
    By \eqref{312} it follows that for all sufficiently large $q$,
\begin{multline}
\Tr \{\one_{(\lambda_q - B, \lambda_q + B)}(H) \,
P_-(\lambda_q^{1/2}(H-\lambda_q))\}
\leq
\Tr \varrho
(\lambda_q^{1/2}(H-\lambda_q))
\\
\leq
\Tr\{\one_{(\lambda_q - B, \lambda_q + B)}(H) \,
P_+(\lambda_q^{1/2}(H-\lambda_q))\}. \label{316}
\end{multline}

By Lemma~\ref{l31} and Theorem~\ref{th13}(iii),
we have
\begin{multline*}
\lim_{q\to\infty} \lambda_q^{-1/2}
\Tr\{\one_{(\lambda_q - B,
\lambda_q + B)}(H) \, P_\pm(\lambda_q^{1/2}(H-\lambda_q))\}
\\
=\frac{1}{2\pi} \int_{{\mathbb T}} \,
 \int_{\R}  \; P_\pm(\widetilde{V}(\omega, b)) \, db \, d \omega =
\int_{\re} P_\pm(t) d\mu(t).
\end{multline*}
Combining this with \eqref{316}, we get
    $$
    \limsup_{q \to \infty}  \lambda_q^{-1/2}
    \Tr \varrho (\lambda_q^{1/2}(H-\lambda_q))
    \leq \int_{\re} P_+(\lambda) d\mu (\lambda),
    $$
    $$
    \liminf_{q \to \infty}  \lambda_q^{-1/2}
    \Tr  \varrho (\lambda_q^{1/2}(H-\lambda_q))
    \geq \int_{\re} P_-(\lambda) d\mu (\lambda).
    $$
Finally,    by \eqref{315},
$$
\int_{\re} (P_+(\lambda) - P_-(\lambda)) d\mu (\lambda) \leq
\varepsilon \int_{\re} \lambda^{\ell_0} d\mu (\lambda).
$$
By \eqref{a9}, the integral in the r.h.s. is finite.
Since $\varepsilon>0$ can be taken arbitrary small, we obtain the
required statement.
\qed

\appendix
\section{}
\subsection{Proof of formula (3.6)}

    By definition,
    $$
    \widehat{\Psi_q}(\zeta) = \frac{2(-1)^q}{(2\pi)^2} \int_{\rd}
    e^{-iz\zeta} {\rm L}_q(2|z|^2) e^{-|z|^2} dz =
\frac{(-1)^q}{(2\pi)^2} \int_{\rd}
    e^{-iu\zeta/\sqrt{2}} {\rm L}_q(|u|^2) e^{-|u|^2/2} du
    $$
    (see \eqref{17a}). Further, by \cite[Eq. 22.12.6]{abst} we
    have
    $$
    {\rm L}_q(|u|^2) = {\rm L}_q(u_1^2 + u_2^2) = \sum_{m = 0}^q
    {\rm L}_m^{(-1/2)}(u_1^2) {\rm L}_{q-m}^{(-1/2)}(u_2^2), \quad u
    \in \rd,
    $$
    where ${\rm L}_m^{(-1/2)}$, $m \in {\mathbb Z}_+$, are the generalized
    Laguerre
    polynomials of order $-1/2$. By \cite[Eq. 22.5.38]{abst}
    we have
    $$
    {\rm L}_m^{(-1/2)}(t^2) = \frac{(-1)^m}{m! 2^{2m}} {\rm
    H}_{2m}(t), \quad t \in \re, \quad m \in {\mathbb Z}_+,
    $$
    where ${\rm H}_m$ are the
    Hermite polynomials.
    Therefore,
    $$
    {\rm L}_q(|u|^2) = \frac{(-1)^q}{2^{2q}} \sum_{m = 0}^q \frac{
    {\rm H}_{2m}(u_1) {\rm H}_{2q - 2m}(u_2)}{m! (q-m)!},
    $$
    and
    $$
    \widehat{\Psi_q}(\zeta) =
    $$
    $$
    \frac{1}{2^{2q+2}\pi^2} \sum_{m = 0}^q \frac{
    1}{m! (q-m)!} \int_\re e^{-iu_1\zeta_1/\sqrt{2}} {\rm
    H}_{2m}(u_1)e^{-u_1^2/2} du_1 \int_\re e^{-iu_2\zeta_2/\sqrt{2}} {\rm
    H}_{2q-2m}(u_2)e^{-u_2^2/2} du_2.
    $$
    It is well known that the functions ${\rm
    H}_{m}(t)e^{-t^2/2}$, $t \in \re$, $m \in {\mathbb Z}_+$, are
    eigenfunctions of the unitary Fourier transform with
    eigenvalues equal to $(-i)^m$ (see e.g. \cite{birsol}). Hence,
    $$
    \widehat{\Psi_q}(\zeta) = \frac{(-1)^q}{2^{2q+1}\pi}  \sum_{m = 0}^q \frac{
    1}{m! (q-m)!} {\rm H}_{2m}(2^{-1/2} \zeta_1) {\rm H}_{2q - 2m}(2^{-1/2}
    \zeta_2)e^{-|\zeta|^2/4} =
    $$
    $$
     (2\pi)^{-1}{\rm L}_q(2^{-1}|\zeta|^2)
    e^{-|\zeta|^2/4} = (-1)^q \Psi_q(2^{-1} \zeta)/2.
    $$

\subsection{Proof of  estimate (3.10)}
Denote $u_q(x)=e^{-x/2}\Lag_q(x)$, $v_q(x)=J_0(\sqrt{(4q+2)x})$.
Using the differential equations for the Laguerre polynomials and
for the Bessel functions, one easily checks that $u_q$ and $v_q$
satisfy
\begin{align*}
xu''_q(x)+u'_q(x)+(q+\tfrac12)u_q(x)&=\tfrac{x}{4}u_q(x),
\\
xv''_q(x)+v'_q(x)+(q+\tfrac12)v_q(x)&=0.
\end{align*}
Using these differential equations and the initial conditions
for $u_q(x)$, $v_q(x)$ at $x=0$, it is easy to verify that
$u_q$ satisfies the integral equation
\begin{gather}
u_q=v_q+K_qu_q,
\quad
(K_q f)(x)=\int_0^x F_q(x,y)f(y)dy,
\label{app1}
\\
F_q(x,y)=-\frac\pi4 y
\bigl(J_0(\sqrt{(4q+2)x})Y_0(\sqrt{(4q+2)y})-Y_0(\sqrt{(4q+2)x})J_0(\sqrt{(4q+2)y}\bigr).
\notag
\end{gather}
This argument is borrowed from \cite{suetin}. Iterating \eqref{app1}, we obtain
\begin{equation}
u_q-v_q=K_qv_q+K_q^2u_q.
\label{app2}
\end{equation}
Now it remains to estimate the two terms in the r.h.s. of \eqref{app2}
in an appropriate way. Using the estimates
$$
\abs{J_0(x)}\leq C/\sqrt{x},
\qquad
\abs{Y_0(x)}\leq C/\sqrt{x},
\qquad x>0,
$$
we obtain
$$
\abs{F_q(x,y)}\leq C q^{-1/2}x^{-1/4}y^{3/4},
\qquad
q\in\N, \quad x>0.
$$
This yields
\begin{equation}
\Abs{\int_0^x F_q(x,y)v_q(y)dy} \leq C q^{-3/4} x^{-1/4}\int_0^x
y^{1/2}dy =Cq^{-3/4}x^{5/4}. \label{app5}
\end{equation}
Next, using the estimate
$\abs{u_q(x)}\leq 1$
(see \cite[Eq. 22.14.12]{abst}),
we obtain
$$
\abs{(K_qu_q)(x)}
\leq
Cq^{-1/2} x^{-1/4}\int_0^x y^{3/4}dy
=
Cq^{-1/2}x^{3/2},
$$
and so
\begin{equation}
\abs{(K_q^2u_q)(x)}
\leq
Cq^{-1}x^{-1/4}\int_0^x y^{\frac32+\frac34}dy
=
Cq^{-1}x^3.
\label{app6}
\end{equation}
Combining \eqref{app2} with \eqref{app5} and \eqref{app6},
we obtain the required estimate \eqref{b7}.\\

{\bf Acknowledgements.} A. Pushnitski and G. Raikov were partially
supported by  {\em N\'ucleo Cient\'ifico ICM} P07-027-F ``{\em
Mathematical Theory of Quantum and Classical Magnetic Systems"}.
C. Villegas-Blas was partially supported by the same {\em N\'ucleo
Cient\'ifico} within the framework of the {\em International
Spectral Network}, and by PAPIIT-UNAM 109610-2. G. Raikov was
partially supported by the UNAM, Cuernavaca, during his stay in
2008, and by the Chilean Science
Foundation {\em Fondecyt} under Grant 1090467. \\
The authors are grateful for hospitality and financial
support to the Bernoulli Center, EPFL, Lausanne, where this work
was initiated within the framework of the Program ``{\em Spectral
and Dynamical Properties of Quantum Hamiltonians}", January - June
2010.

{\sc Alexander Pushnitski}\\
 Department of Mathematics,\\
 King's College London,\\
Strand, London, WC2R 2LS, United Kingdom\\
E-mail: alexander.pushnitski@kcl.ac.uk\\

{\sc Georgi Raikov}\\
 Facultad de
Matem\'aticas,\\ Pontificia Universidad Cat\'olica de Chile,\\
Vicu\~na Mackenna 4860, Santiago de Chile\\
E-mail: graikov@mat.puc.cl\\

{\sc Carlos Villegas-Blas}\\
 Instituto de Matem\'aticas,\\
 Universidad Nacional Auton\'oma de M\'exico, \\
 Cuernavaca, Mexico\\
E-mail: villegas@matcuer.unam.mx
\end{document}